\newcommand{\kod}{\operatorname{kod}}
\newcommand{\const}{\text{const}}
\newcommand\C{{\mathbb C}}
\newcommand\Q{{\mathbb Q}}
\newcommand\R{{\mathbb R}}
\newcommand{\elm}[3]{\left( \begin{array}{cccc} e^{k #3} & 0 & 0 & #1\\ 0 & e^{-k #3} & 0 & #2\\ 0 & 0 & 1 & #3\\ 0 & 0 & 0 & 1 \end{array} \right)}
\renewcommand{\Im}{\operatorname{Im}}
\renewcommand{\Re}{\operatorname{Re}}
\title[Kodaira dimension of almost complex 4-dimensional solvmanifolds]{On Kodaira dimension of almost complex 4-dimensional solvmanifolds without complex structures}
\author{Andrea Cattaneo}
\address{Dipartimento di Matematica ``Tullio Levi-Civita''\\
Universit\`a degli Studi di Padova\\
Via Trieste 63\\
35121 Padova, Italy.\newline Current affiliation: Dipartimento di Scienze Matematiche, Fisiche e Informatiche
Unit\`a di Matematica e Informatica\\
Universit\`a degli Studi di Parma\\
Parco Area delle Scienze 53/A, 43124\\
Parma, Italy.}
\email{andrea.cattaneo@unipd.it\\andrea.cattaneo@unipr.it}
\author{Antonella Nannicini}
\address{Dipartimento di Matematica ed Informatica ``U. Dini''\\
Universit\`a degli Studi di Firenze\\
Viale Morgagni 67/A\\
50134 Firenze, Italy.}
\email{antonella.nannicini@unifi.it}
\author{Adriano Tomassini}
\address{Dipartimento di Scienze Matematiche, Fisiche e Informatiche
Unit\`a di Matematica e Informatica\\
Universit\`a degli Studi di Parma\\
Parco Area delle Scienze 53/A, 43124\\
Parma, Italy.}
\email{adriano.tomassini@unipr.it}
\keywords{Kodaira dimension, almost complex manifolds, almost K\"ahler manifolds, canonical connection.}
\thanks{This work was partially supported by the Project PRIN 2017 ``Real and Complex Manifolds: Topology, Geometry and holomorphic dynamics'' and by GNSAGA of INdAM. When the major part of this work was done Andrea Cattaneo was supported by the `Grant de Bartolomeis', a fellowship of Universit\`a di Firenze in memory of Prof. Paolo de Bartolomeis}
\subjclass[2010]{53C55, 53C25}
\begin{document}

\begin{abstract}
The aim of this paper is to continue the study of Kodaira dimension for almost complex manifolds, focusing on the case of compact $4$-dimensional solvmanifolds without any integrable almost complex structure. According to the classification theory we consider: $\mathfrak{r}\mathfrak{r}_{3, -1}$, $\mathfrak{nil}^4$ and $\mathfrak{r}_{4, \lambda, -(1 + \lambda)}$ with $-1 < \lambda < -\frac{1}{2}$. For the first solvmanifold we introduce special families of almost complex structures, compute the corresponding Kodaira dimension and show that it is no longer a deformation invariant. Moreover we prove Ricci flatness of the canonical connection for the almost K\"ahler structure. Regarding the other two manifolds we compute the Kodaira dimension for certain almost complex structures. Finally we construct a natural hypercomplex structure providing a twistorial description.
\end{abstract}

\maketitle

\tableofcontents

\section{Introduction}
Let $M$ be a smooth $2n$-dimensional manifold endowed with an almost complex structure $J$, that is a 
smooth $(1, 1)$-tensor field such that $J^2 = -\id_M$. According to the celebrated Newlander--Nirenberg Theorem, the almost complex structure $J$ is induced by a structure of complex manifold on $M$, that is $J$ is {\em integrable}, if and only if the torsion tensor of $J$, namely the {\em Nijenhuis tensor} of $J$, vanishes. The investigation of the existence of new almost complex invariants of $(M, J)$ is a natural problem in almost complex geometry. Recently, in this direction, H.~Chen and W.~Zhang in \cite{Chen-Zhang-1} and \cite{Chen-Zhang-2}, starting with a $2n$-dimensional compact almost complex manifold $(M, J)$, introduce the notion of {\em plurigenera} and give the definition of {\em Kodaira dimension} of $(M, J)$, denoted as $P_m(M, J)$, $\kod(M, J)$ respectively. They prove that, given any $4$-dimensional compact almost complex manifold, the plurigenera and the Kodaira dimension are birational invariants in the almost complex category. Furthermore, they show that such invariants are not stable under small deformations of the almost complex structure. For this purpose, they construct in \cite[$\S$6.1]{Chen-Zhang-1} a $1$-parameter family $\{ J_a \}_{a \in \R^*}$ of almost complex structures on the Kodaira--Thurston manifold $X$ such that
\[\kod(X, J_a) =
\left\{
\begin{array}{ll}
0 & a \in \pi \Q \smallsetminus \{ 0 \},\\
-\infty & a \notin \pi \Q.
\end{array}
\right.\]
In \cite{Cattaneo-Nannicini-Tomassini} the authors compute the Kodaira dimension of special families of $6$-dimensional almost complex manifolds and of their deformations. More in particular, starting with the smooth manifold underlying the Nakamura complex $3$-fold, they compute the Kodaira dimension of an almost K\"ahler deformation, showing that the possible values are $0$ or $-\infty$; furthermore, it is proved that the Ricci curvature of the canonical connection $\nabla^c$ vanishes. 

The aim of this paper is to continue the study of the Kodaira dimension in the almost complex category. More specifically, we focus on compact $4$-dimensional solvmanifolds $M$, that is, compact quotients of simply connected solvable Lie groups by discrete cocompact subgroups, i.e., by lattices, without any complex structure. According to the classification theory (see \cite{Angella-Bazzoni-Parton}) these are the solvable Lie algebras whose corresponding simply connected Lie groups admit compact quotients by a lattice:
\begin{enumerate}
\item $\mathfrak{r}\mathfrak{r}_{3, -1}$;
\item $\mathfrak{nil}^4$ ;
\item $\mathfrak{r}_{4, \lambda, -(1 + \lambda)}$ with $-1 < \lambda \leq -\frac{1}{2}$.
\end{enumerate}
Notice that, with the only exception of $\mathfrak{r}_{4, -\frac{1}{2}, -\frac{1}{2}}$, none of the $4$-dimensional compact quotients as above carries any integrable almost complex structure. Therefore, the notion by Chen and Zhang of Kodaira dimension appears as one useful tool for the study of the almost complex geometry of such manifolds.

The paper is organized as follows. After we recall the definition of Kodaira dimension for almost complex manifolds in Section \ref{preliminaries}, in Section \ref{almost-complex-solvmanifolds} we consider the families of solvmanifolds $M(k)$, $\cN$ and $\cM(\lambda)$ associated respectively to the solvable Lie algebras $\mathfrak{r}\mathfrak{r}_{3, -1}$, $\mathfrak{nil}^4$ and $\mathfrak{r}_{4, \lambda, -(1 + \lambda)}$ with $-1 < \lambda < -\frac{1}{2}$ and we recall their constructions. Sections \ref{s5}, \ref{s6} and \ref{s7} are devoted to the computations of the Kodaira dimension of special families of almost complex structures on $M(k)$, $\cN$ and $\cM(\lambda)$: see Propositions \ref{prop: kod dim m(k) 1} and \ref{prop: kod dim m(k) 2} for the structures on $M(k)$, Propositions \ref{prop: kod dim n 1}, \ref{prop: kod dim n 2} and \ref{prop: kod dim n 3} for the structures on $\cN$ and Proposition \ref{prop: kod dim m(lambda)} for the one on $\cM(\lambda)$. The main tool for our computations is an ad hoc method involving Fourier series: for each example we view functions on the solvmanifolds as functions defined on the universal cover which are invariant with respect to a suitable lattice. Then we find a sublattice with respect to which such functions are periodic, and then we express them in Fourier series. Finally, we compute the Fourier coefficients by solving some system of partial differential equations and then obtain the Kodaira dimension.

Furthermore, the Ricci curvature of the canonical Chern connection is computed in the case of almost K\"ahler structures. In particular, for the almost K\"ahler family $(J_s, g_s)$ on $M(k)$ (see \eqref{eq: first j}), the Ricci curvature vanishes (see Proposition \ref{Chern-flat}). It is worth to note that there exists a non Chern--Ricci flat almost K\"ahler metric, with vanishing scalar curvature on $(\cN, J, g_J)$, such that $\kod(\cN, J) = -\infty$ (see Proposition \ref{scalar-flat}). Notice that the Kodaira dimension of $M(k)$ is no longer a deformation invariant.

In Section \ref{sect: twistor}, we construct a natural hypercomplex structure providing a twistorial description. Finally, in Section \ref{sect: norden} we describe some natural Norden structures.

\begin{ack}
The authors are grateful to Weiyi Zhang for a interesting discussions and suggestions during the workshop `Geometria in Bicocca 2019' held in Milan, which led to the preparation of this paper. They also want to thank Scott O.\ Wilson for having attracted their attention on an inaccuracy in $\S$ \ref{sect: M(lambda)}. The authors want also to express their gratitude to the useful suggestions of the Referee, which led to a substantial improvement of some of the results presented in the paper.
\end{ack}

\section{Preliminaries and notation}\label{preliminaries}
We start by fixing some notation and recalling the definition of Kodaira dimension in the almost complex category.

Let $M$ be a compact $2n$-dimensional smooth manifold endowed with an almost complex structure $J$. Denote by $N_J$ the {\em Nijenhuis tensor} of $J$, that is, the smooth $(1,1)$-tensor field defined by
\[N_J(X,Y)=[JX,JY]-[X,Y]-J[JX,Y]-J[X,JY]\]
for every pair of vector fields $X$, $Y$ on $M$. The Newlander--Nirenberg Theorem states that $J$ is integrable if and only if $N_J$ vanishes.

Following \cite{Chen-Zhang-1}, we briefly recall the definition of \emph{Kodaira dimension} of $(M, J)$.

Denote by $\Lambda_J^{p, q} M$ the bundle of smooth $(p, q)$-forms on $(M, J)$ and by $A_J^{p, q}(M) = \Gamma\pa{M, \Lambda_J^{p, q} M}$ the space of smooth $(p, q)$-forms on $(M, J)$. Then the exterior differential $d$ satisfies
\[d(A_J^{p, q}(M)) \subset A_J^{p + 2, q - 1}(M) + A_J^{p + 1, q}(M) + A_J^{p, q + 1}(M) + A_J^{p-1, q + 2}(M),\]
and, consequently, $d$ decomposes as
\[d = \mu_J + \partial_J + \overline{\partial}_J + \overline{\mu}_J,\]
where $\mu_J = \pi^{p + 2, q - 1} \circ d$ and $\overline{\partial}_J = \pi^{p, q + 1} \circ d$. 
Let $\cK_X = \Lambda_J^{n, 0} M$ be the \emph{canonical bundle} of the almost complex manifold $X = (M, J)$. Then $\cK_X$ is a complex line bundle over $X$ and the $\overline{\partial}_J$-operator on $(M, J)$ gives rise to a pseudoholomorphic structure on $\cK_X$, i.e., a differential operator (still denoted by $\overline{\partial}_J$)
\[\overline{\partial}_J: \Gamma(M, \cK_X) \to \Gamma(M, \Lambda^{0, 1}_J M \otimes \cK_X)\]
satisfying the Leibniz rule
\[\overline{\partial}_J (f \sigma) = \overline{\partial}_J f \otimes \sigma + f \overline{\partial}_J \sigma,\]
for every smooth function $f$ and section $\sigma$.

As a consequence of Hodge Theory (see \cite[Theorem 1.1]{Chen-Zhang-1}), the space $H^0(M,\cK_X^{\otimes m})$ of pseudoholomorphic pluricanonical sections is a finite dimensional complex vector space for every $m \geq 1$.

\begin{defin*}[{\cite[Definition 1.2]{Chen-Zhang-1}}]
The \emph{$m^{\text{th}}$-plurigenus} of $(M, J)$ is
\[P_m(M, J) := \dim_\C H^0(M,\cK_X^{\otimes m}).\]
The \emph{Kodaira dimension} of $(M, J)$ is defined as
\[\kod(M, J) := \left\{
\begin{array}{ll}
-\infty & \text{if } P_m(M, J) = 0 \text{ for every } m \geq 1,\\
\displaystyle \limsup_{m \to +\infty} \frac{\log P_m(M, J)}{\log m} & \text{otherwise}.
\end{array}
\right.\]
\end{defin*}

\section{\texorpdfstring{$4$}{4}-solvmanifolds without complex structures}\label{almost-complex-solvmanifolds}

\subsection{The solvmanifolds \texorpdfstring{$M(k)$}{M(k)}}\label{sect: M(k)}

Based on the classification of the pairs consisting of a connected, simply connected, (real) three dimensional solvable Lie group and a discrete cocompact lattice made in \cite{Auslander-Green-Hahn}, Fern\'andez and Gray produced in \cite{Fernandez-Gray} an example of compact four dimensional manifold admitting no integrable almost complex structures. In this section we review the construction of this manifold, with the purpose of introducing the notation.

Fix $k \in \IR \smallsetminus \set{0}$ such that $e^k + e^{-k} \in \IZ$, and consider the group
\[G(k) = \set{\elm{x}{y}{z} \st x, y, z \in \IR}.\]
Then we can see $G(k) \simeq \IR^2 \rtimes_{\phi_k} \IR$, where the semidirect product is taken with respect to the action of $\IR$ on $\IR^2$ given by
\[\begin{array}{rccl}
\phi_k: & \IR & \longrightarrow & \GL(2, \IR)\\
 & z & \longmapsto & \left( \begin{array}{cc}
e^{kz} & 0\\
0 & e^{-kz}
\end{array} \right).
\end{array}\]
It is then easy to show that $G(k)$ is a solvable, non nilpotent, Lie group.

Fix now two linearly independent vectors $u = (u_1, u_2), v = (v_1, v_2) \in \IR^2$ and an integer $n \in \IZ$, and let $D = D(u, v, n)$ be the subgroup of $G(k)$ generated by
\[\begin{array}{c}
E_u = \left( \begin{array}{cccc}
1 & 0 & 0 & u_1\\
0 & 1 & 0 & u_2\\
0 & 0 & 1 & 0\\
0 & 0 & 0 & 1 \end{array} \right), \qquad E_v = \left( \begin{array}{cccc}
1 & 0 & 0 & v_1\\
0 & 1 & 0 & v_2\\
0 & 0 & 1 & 0\\
0 & 0 & 0 & 1
\end{array} \right),\\
E_n = \left( \begin{array}{cccc}
e^{nk} & 0 & 0 & 0\\
0 & e^{-nk} & 0 & 0\\
0 & 0 & 1 & n\\
0 & 0 & 0 & 1
\end{array} \right).
\end{array}\]
Then $D$ is a discrete cocompact lattice in $G(k)$ (see \cite[Theorem 5(4)]{Auslander-Green-Hahn}).

On the product $G(k) \times \IR$ we have the action of $D \times \IZ$, where $D$ acts on $G(k)$ by multiplication \emph{on the left}, while $\IZ$ acts on $\IR$ by translations. Let $M(k)$ be the quotient of $G(k) \times \IR$ by this diagonal action, then $M(k)$ is a compact manifold, admitting the following $1$-forms:
\[\begin{array}{cl}
e^1 & \text{induced by the invariant form } e^{-kz} dx,\\
e^2 & \text{induced by the invariant form } e^{kz} dy,\\
e^3 & \text{induced by the invariant form } dz,\\
e^4 & \text{induced by the invariant form } dt.
\end{array}\]
Dually, the corresponding fields are
\begin{equation}\label{eq: fields M(k)}
\begin{array}{cl}
e_1 & \text{induced by the invariant field } e^{kz} \frac{\partial}{\partial x},\\
e_2 & \text{induced by the invariant field } e^{-kz} \frac{\partial}{\partial y},\\
e_3 & \text{induced by the invariant field } \frac{\partial}{\partial z},\\
e_4 & \text{induced by the invariant field } \frac{\partial}{\partial t}.
\end{array}
\end{equation}
The structure equations for $M(k)$ and the only non trivial commutators are
\[de^1 = k e^1 \wedge e^3, \qquad de^2 = -k e^2 \wedge e^3, \qquad de^3 = 0, \qquad de^4 = 0\]
and
\[[e_1, e_3] = -k e_1, \qquad [e_2, e_3] = k e_2\]
respectively.

We will consider on $M(k)$ the following additional structures:
\begin{itemize}
\item the $2$-form
\[\omega = e^1 \wedge e^2 + e^3 \wedge e^4,\]
which is easily seen to be a symplectic form on $M(k)$;
\item the Riemannian metric
\[g = e^1 \tensor e^1 + e^2 \tensor e^2 + e^3 \tensor e^3 + e^4 \tensor e^4.\]
\end{itemize}

\begin{rem}
The manifold $M(k)$ is (real) parallelizable, since the vectors $e_1(P)$, $e_2(P)$, $e_3(P)$ and $e_4(P)$ span the tangent space $T_P M(k)$ for every $P \in M(k)$. As a consequence, we can define an almost complex structure on $M(k)$ by simply specifying its action on $H^0(M(k), TM(k))$.
\end{rem}

\begin{rem}
Observe that a smooth function $f: M(k) \longrightarrow \IR$ can be identified with a smooth function (still denoted by $f$) on the universal cover $\IR^4$ such that
\[f(x, y, z, t) = f(e^{kn\gamma}x + \alpha u_1 + \beta v_1, e^{-kn\gamma}y + \alpha u_2 + \beta v_2, z + n\gamma, t + \varepsilon)\]
for every $\alpha, \beta, \gamma, \varepsilon \in \IZ$. In particular, for $\gamma = 0$ we see that $f$ is $\IZ^3$-periodic, and so it admits a Fourier series expansion of the form
\begin{equation}\label{eq: Fourier M(k)}
f(x, y, z, t) = \sum_{I = (a, b, c) \in \IZ^3} f_I(z) e^{2\pi \ii \frac{1}{\delta} ((a v_2 - b u_2)x + (-a v_1 + b u_1)y + c \delta t)},
\end{equation}
where $\delta = u_1 v_2 - u_2 v_1 \neq 0$.
\end{rem}

\subsection{The nilmanifold \texorpdfstring{$\cN$}{N}}

Let $\gothG$ be the group $\pa{\IR^4, *}$, where the product of $\pa{a, b, c, d}, \pa{x, y, z, t} \in \IR^4$ is defined as
\[\pa{a, b, c, d} * \pa{x, y, z, t} = \pa{x + a, y + b, z + ay + c, t + \frac{1}{2} a^2 y + az + d}.\]

It is then straightforward to verify that the set of $1$-forms
\[\begin{array}{l}
e^1 = dx,\\
e^2 = dy,\\
e^3 = dz - x dy,\\
e^4 = dt + \frac{1}{2} x^2 dy - x dz
\end{array}\]
on $\gothG$ is a basis for the space $\Gamma\pa{\gothG, T \gothG}$, and that these forms are left invariant with respect to translations by elements in the subgroup consisting of $4$-tuples with \emph{even} integral entries.

Call $\cN$ the quotient of $\gothG$ by such an action, then the previous forms descend to $\cN$ and their valuations at any given point span the cotangent space at that point. The dual tangent frame to $\cN$ is given by the fields
\begin{equation}\label{eq: tangent fields N}
\begin{array}{l}
e_1 = \frac{\partial}{\partial x},\\
e_2 = \frac{\partial}{\partial y} + x \frac{\partial}{\partial z} + \frac{1}{2} x^2 \frac{\partial}{\partial t},\\
e_3 = \frac{\partial}{\partial z} + x \frac{\partial}{\partial t},\\
e_4 = \frac{\partial}{\partial t}.
\end{array}
\end{equation}

The structure equations for $\cN$ and the only non trivial commutators are
\begin{equation}\label{eq: structure equations gothM}
de^1 = 0, \qquad de^2 = 0, \qquad de^3 = -e^1 \wedge e^2, \qquad de^4 = -e^1 \wedge e^3,
\end{equation}
and
\[[e_1, e_2] = e_3, \qquad [e_1, e_3] = e_4,\]
respectively.

\begin{rem}
The $2$-form
\[\xi = e^1 \wedge e^4 + e^2 \wedge e^3\]
is a symplectic form on $\cN$. Indeed, $\xi \wedge \xi = 2 e^1 \wedge e^2 \wedge e^3 \wedge e^4$ and by the structure equations $d\xi = 0$.
\end{rem}

\begin{rem}
As $\cN$ is a quotient of $\IR^4$ by the action of a suitable lattice, we can see a smooth function $f: \cN \longrightarrow \IR$ as an invariant function on $\IR^4$, where invariance is meant in terms of the lattice:
\begin{equation}\label{eq: periodicity N}
f(x, y, z, t) = f(x + 2\alpha, y + 2\beta, z + 2\alpha y + 2\gamma, t + 2\alpha^2 y + 2 \alpha z + 2\delta) \qquad \forall \, \alpha, \beta, \gamma, \delta \in \IZ.
\end{equation}
In particular $f$ is periodic of period $2$ in $y, z, t$, hence it admits a Fourier expansion of the form
\begin{equation}\label{eq: Fourier N}
f(x, y, z, t) = \sum_{I = (a, b, c) \in \IZ^3} f_I(x) e^{\ii \pi (ay + bz + ct)}.
\end{equation}
\end{rem}

\subsection{The solvmanifolds \texorpdfstring{$\cM(\lambda)$}{M(l)}}\label{sect: M(lambda)}

Let $\lambda \in \left( -1, -\frac{1}{2} \right)$ and define $\cG(\lambda)$ as the group
\[\cG(\lambda) = \set{\left( \begin{array}{ccccc}
e^{\lambda t} & 0 & 0 & 0 & x\\
0 & e^t & 0 & 0 & y\\
0 & 0 & e^{-(1 + \lambda) t} & 0 & z\\
0 & 0 & 0 & 1 & t\\
0 & 0 & 0 & 0 & 1
\end{array} \right) \st x, y, z, t \in \IR}.\]

Then $\cG(\lambda)$ is a simply connected solvable Lie group isomorphic to
\[\IR^3 \rtimes_\varphi \IR, \text{ where } \varphi(t) = \left( \begin{array}{ccc}
e^{\lambda t} & 0 & 0\\
0 & e^t & 0\\
0 & 0 & e^{-(1 + \lambda)t}
\end{array} \right)\]
and we can characterize the values of the parameter $\lambda$ for which it admits a lattice. In fact, by \cite[Proposition 2.1]{Lee-Lee-Shin-Yi} we have that $\cG(\lambda)$ admits a lattice if and only if there exist integers $m, n$ with $m \neq n$ such that the equation $x^3 - m x^2 + n x - 1 = 0$ has $3$ real, positive and distinct solutions $\alpha_1 > \alpha_2 > \alpha_3$. Assume this is the case, and define
\[(\lambda, t_0) = \left\{ \begin{array}{ll}
\left( \frac{\log \alpha_3}{\log \alpha_1}, \log \alpha_1 \right), & \text{if } \alpha_1 > 1 > \alpha_2 > \alpha_3,\\
\left( \frac{\log \alpha_1}{\log \alpha_3}, \log \alpha_3 \right), & \text{if } \alpha_1 > \alpha_2 > 1> \alpha_3;
\end{array} \right.\]
then there exists a matrix $A \in \operatorname{GL}(3, \IZ)$ which is similar to the matrix $\varphi(t_0)$. Let $P \in \operatorname{GL}(3, \IR)$ be such that $\varphi(t_0) = P \cdot A \cdot P^{-1}$. Let $\IZ$ act on $\IZ^3$ via the matrix $A$ and consider the semidirect product $\IZ^3 \rtimes \IZ$, it is possible to embed this group in $\cG(\lambda)$ as a lattice in the following way: we send the canonical generators of $\IZ^3$ to the vectors in $\IR^3$ corresponding to the columns of the matrix $P$ and the generator of $\IZ$ corresponding to $A$ to $t_0$.

We let $\cM(\lambda)$ be the quotient of $\cG(\lambda)$ by the \emph{left} action of translations by elements of a lattice in $\cG(\lambda)$ (being understood that $\lambda$ is chosen in such a way that a lattice exists).

The $1$-forms
\[\begin{array}{l}
e^1 = dt,\\
e^2 = e^{-t} dy,\\
e^3 = e^{-\lambda t} dx,\\
e^4 = e^{(1 + \lambda) t} dz,
\end{array}\]
generate the cotangent space at each point of $\cM(\lambda)$, and it is easy to see that they satisfy the structure equations
\begin{equation}\label{eq: structure equations cm(lambda)}
de^1 = 0, \qquad de^2 = -e^1 \wedge e^2, \qquad de^3 = -\lambda e^1 \wedge e^3, \qquad de^4 = (1 + \lambda) e^1 \wedge e^4.
\end{equation}

The dual picture on the tangent bundle is as follows: the dual tangent frame is
\begin{equation}\label{eq: tangent vectors cm(lambda)}
\begin{array}{l}
e_1 = \frac{\partial}{\partial t},\\
e_2 = e^t \frac{\partial}{\partial y},\\
e_3 = e^{\lambda t} \frac{\partial}{\partial x},\\
e_4 = e^{-(1 + \lambda) t} \frac{\partial}{\partial z},
\end{array}
\end{equation}
and the non trivial commutators are
\[[e_1, e_2] = e_2, \qquad [e_1, e_3] = \lambda e_3, \qquad [e_1, e_4] = -(1 + \lambda) e_4.\]

Hence $\cM(\lambda)$ is a $2$-step (non nilpotent) solvmanifold. It is known (cf.~\cite[Section 4]{Hasegawa}) that $\cM(\lambda)$ admits no integrable almost complex structures.

\begin{rem}
As $\cM(\lambda)$ is a quotient of $\IR^4$ by the action of a suitable lattice $\Gamma$, we can see a smooth function $f: \cM(\lambda) \longrightarrow \IR$ as an invariant function on $\IR^4$, where invariance is meant in terms of the lattice:
\begin{equation}\label{eq: periodicity M(lambda)}
f(x, y, z, t) = f(e^{\lambda \delta}x + \alpha, e^\delta y + \beta, e^{-(1 + \lambda) \delta}z + \gamma, t + \delta) \qquad \forall \, (\alpha, \beta, \gamma, \delta) \in \Gamma.
\end{equation}
In particular, $f$ is periodic in $(x, y, z)$ with periods given by the columns of the matrix $P$, hence $f$ admits a Fourier expansion of the form
\begin{equation}\label{eq: Fourier M(lambda)}
f(x, y, z, t) = \sum_{I = (a, b, c) \in \IZ^3} f_I(t) e^{2 \pi \ii (\lambda_I x + \mu_I y + \nu_I z)},
\end{equation}
with $(\lambda_I, \mu_I, \nu_I) = I \cdot P^{-1} = (a, b, c) \cdot P^{-1}$.
\end{rem}

\section{Special families of almost complex structures on \texorpdfstring{$M(k)$}{M(k)}}\label{s5}

Consider the following families of endomorphisms of the tangent bundle of $M(k)$:
\begin{equation}\label{eq: first j}
\begin{array}{l}
J_s = \left( \begin{array}{cccc}
\alpha_1 & \beta_1 & 0 & 0\\
\gamma_1 & -\alpha_1 & 0 & 0\\
0 & 0 & \alpha_2 & \beta_2\\
0 & 0 & \gamma_2 & -\alpha_2
\end{array} \right),\\ \\
\text{with } \left\{ \begin{array}{l}
s = (r_1, s_1, r_2, s_2) \in \IR^4,\\
\alpha_i = -\frac{2s_i}{r_i^2 + s_i^2 - 1},\\
\beta_i = \frac{r_i^2 + 2 r_i + s_i^2 + 1}{r_i^2 + s_i^2 - 1},\\
\gamma_i = -\frac{r_i^2 - 2 r_i + s_i^2 + 1}{r_i^2 + s_i^2 - 1},
\end{array} \right.
\end{array}
\end{equation}
and
\[\begin{array}{l}
J_r = \alpha \left( \begin{array}{cccc}
0 & -(1 - r)^2 & 2r^2 & -2r(1 - r)\\
(1 - r)^2 & 0 & -2r(1 - r) & -2r^2\\
-2r^2 & 2r(1 - r) & 0 & -(1 - r)^2\\
2r(1 - r) & 2r^2 & (1 - r)^2 & 0
\end{array} \right),\\ \\
\text{with } \left\{ \begin{array}{l}
r \in \IR,\\
\alpha = \frac{1}{(1 - r)^2 + 2r^2} = \frac{1}{3r^2 - 2r + 1}.
\end{array} \right.
\end{array}\]
It is then easy to see that $\alpha_i^2 + \beta_i \gamma_i = -1$ for $i = 1, 2$ and that both $J_s$ and $J_r$ define smooth families of almost complex structures on $M(k)$, as $J_s^2 = J_r^2 = -\id_{TM(k)}$. We recall here that since $M(k)$ can not admit an integrable almost complex structure, none of the $J_s$'s nor of the $J_r$'s is integrable.

The almost complex structures $J_s$ behave well with respect to the symplectic form $\omega = e^1 \wedge e^2 + e^3 \wedge e^4$, at least for small values of the parameter $s = (r_1, s_1, r_2, s_2)$: we have in fact that $\omega(J_s \blank, J_s \blank) = \omega(\blank, \blank)$, and that the symmetric bilinear form $g_s(\blank, \blank) = \omega(\blank, J_s \blank)$ is positive definite (hence it is a Riemannian metric on $M(k)$). The matrix representation for the metric $g_s$ in the basis $\set{e_1, e_2, e_3, e_4}$ is
\[g_s = \left( \begin{array}{cccc}
\gamma_1 & -\alpha_1 & 0 & 0\\
-\alpha_1 & -\beta_1 & 0 & 0\\
0 & 0 & \gamma_2 & -\alpha_2\\
0 & 0 & -\alpha_2 & -\beta_2
\end{array} \right).\]
Since the fundamental form of all the almost Hermitian manifolds $(M(k), g_s, J_s)$ is $\omega$, we have a family of almost K\"ahler manifolds.

On the other hand, the almost complex structures $J_r$ are all compatible with the fixed metric $g$, as $g(J_r \blank, J_r \blank) = g(\blank, \blank)$, hence we have an almost Hermitian manifold $(M(k), g, J_r)$ for all $r \in \IR$. By a direct computation, it is easy to see that the fundamental form $\omega_r(\blank, \blank) = g(J_r \blank, \blank)$ is $d$-closed if and only if $r = 0$. So, apart for $r = 0$, none of the manifolds in this family is almost K\"ahler.

We remark that both families $\{J_s\}$ and $\{J_r\}$ are deformations of the almost complex structure $J_0$ defined by $s = 0$ in the first family and by $r = 0$ in the second family.

\subsection{Kodaira dimension}

We focus first on the family $(M(k), J_s)$.

We can then find a $g_s$-orthonormal frame $\set{\varepsilon_1(s), \varepsilon_2(s), \varepsilon_3(s), \varepsilon_4(s)}$ for the tangent bundle (to simplify the notation, from now on we will drop the explicit dependence on $s$):
\[\begin{array}{ll}
\varepsilon_1 = \frac{1}{\sqrt{\gamma_1}} e_1, & \varepsilon_3 = \frac{1}{\sqrt{\gamma_2}} e_3,\\
\varepsilon_2 = \frac{1}{\sqrt{\gamma_1}} (\alpha_1 e_1 + \gamma_1 e_2), & \varepsilon_4 = \frac{1}{\sqrt{\gamma_2}} (\alpha_2 e_3 + \gamma_2 e_4),
\end{array}\]
whose corresponding dual frame is
\[\begin{array}{ll}
\varepsilon^1 = \sqrt{\gamma_1} \pa{e^1 - \frac{\alpha_1}{\gamma_1} e^2}, & \varepsilon^3 = \sqrt{\gamma_2} \pa{e^3 - \frac{\alpha_2}{\gamma_2} e^4},\\
\varepsilon^2 = \frac{1}{\sqrt{\gamma_1}} e^2, & \varepsilon^4 = \frac{1}{\sqrt{\gamma_2}} e^4.
\end{array}\]

The commutators of the fields and, dually, the differentials of these forms are respectively given by
\[\begin{array}{lll}
[\varepsilon_1, \varepsilon_2] = 0, & [\varepsilon_1, \varepsilon_3] = -\frac{k}{\sqrt{\gamma_2}} \varepsilon_1, & [\varepsilon_1, \varepsilon_4] = -\frac{k \alpha_2}{\sqrt{\gamma_2}} \varepsilon_1,\\
{[}\varepsilon_2, \varepsilon_3] = -2 \frac{k \alpha_1}{\sqrt{\gamma_2}} \varepsilon_1 + \frac{k}{\sqrt{\gamma_2}} \varepsilon_2, & [\varepsilon_2, \varepsilon_4] = -2 \frac{k \alpha_1 \alpha_2}{\sqrt{\gamma_2}} \varepsilon_1 + \frac{k \alpha_2}{\sqrt{\gamma_2}} \varepsilon_2, & [\varepsilon_3, \varepsilon_4] = 0,
\end{array}\]
and by
\[\begin{array}{ll}
d\varepsilon^1 = \frac{k}{\sqrt{\gamma_2}} \varepsilon^1 \wedge \varepsilon^3 + \frac{k \alpha_2}{\sqrt{\gamma_2}} \varepsilon^1 \wedge \varepsilon^4 + 2 \frac{k \alpha_1}{\sqrt{\gamma_2}} \varepsilon^2 \wedge \varepsilon^3 + 2 \frac{k \alpha_1 \alpha_2}{\sqrt{\gamma_2}} \varepsilon^2 \wedge \varepsilon^4, & d\varepsilon^3 = 0,\\
d\varepsilon^2 = -\frac{k}{\sqrt{\gamma_2}} \varepsilon^2 \wedge \varepsilon^3 - \frac{k \alpha_2}{\sqrt{\gamma_2}} \varepsilon^2 \wedge \varepsilon^4, & d\varepsilon^4 = 0.
\end{array}\]

Now, we switch from the real to the complex formalism. Let $h_s$ be the Hermitian metric induced by $g_s$, we consider the $h_s$-unitary frame of fields of type $(1, 0)$
\[\cX_1 = \frac{\sqrt{2}}{2} \pa{\varepsilon_1 - \ii \varepsilon_2}, \qquad \cX_2 = \frac{\sqrt{2}}{2} \pa{\varepsilon_3 - \ii \varepsilon_4},\]
and its dual frame of $(1, 0)$-forms
\[\varphi^1 = \frac{\sqrt{2}}{2} \pa{\varepsilon^1 + \ii \varepsilon^2}, \qquad \varphi^2 = \frac{\sqrt{2}}{2} \pa{\varepsilon^3 + \ii \varepsilon^4}.\]
It is in fact easy to see that the almost complex structure $J_s$ takes the standard form on the basis $\set{\varepsilon_1(s), \varepsilon_2(s), \varepsilon_3(s), \varepsilon_4(s)}$. It follows that
\[\begin{array}{rl}
d\varphi^1 = & \frac{\sqrt{2} k}{2 \sqrt{\gamma_2}} (-\ii \alpha_1 (1 - \ii \alpha_2) \varphi^1 \wedge \varphi^2 +\\
 & - \ii \alpha_1 (1 + \ii \alpha_2) \varphi^1 \wedge \bar{\varphi}^2 +\\
 & - (1 + \ii \alpha_1)(1 - \ii \alpha_2) \varphi^2 \wedge \bar{\varphi}^1 +\\
 & + (1 + \ii \alpha_1)(1 + \ii \alpha_2) \bar{\varphi}^1 \wedge \bar{\varphi}^2),\\
d\varphi^2 = & 0,
\end{array}\]
from which we deduce that
\[\begin{array}{l}
\delbar \varphi^1 = -\frac{\sqrt{2} k}{2 \sqrt{\gamma_2}} \left( \ii \alpha_1 (1 + \ii \alpha_2) \varphi^1 \wedge \bar{\varphi}^2 + (1 + \ii \alpha_1)(1 - \ii \alpha_2) \varphi^2 \wedge \bar{\varphi}^1 \right),\\
\delbar \varphi^2 = 0.
\end{array}\]

We observe that $\varphi^1 \wedge \varphi^2$ is a smooth section of the canonical bundle of $(M(k), J_s)$ and
\[\delbar(\varphi^1 \wedge \varphi^2) = \frac{\sqrt{2} k}{2 \sqrt{\gamma_2}} \ii \alpha_1(1 + \ii \alpha_2) \varphi^1 \wedge \varphi^2 \wedge \bar{\varphi}^2.\]

\begin{prop}\label{prop: kod dim m(k) 1}
In a suitable neighbourhood of the origin the Kodaira dimension of $(M(k), J_s)$ is
\[\kod(M(k), J_s) = \left\{ \begin{array}{ll}
-\infty & \text{if } \alpha_1 \neq 0,\\
0 & \text{if } \alpha_1 = 0.
\end{array} \right.\]
\end{prop}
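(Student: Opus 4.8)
The plan is to compute the pluricanonical sections directly using the Fourier series machinery set up in the preliminary remarks for $M(k)$. Since $M(k)$ is parallelizable, the canonical bundle $\cK_{X}$ is trivialized by the global $(2,0)$-form $\varphi^1 \wedge \varphi^2$, and $\cK_X^{\otimes m}$ is trivialized by $(\varphi^1 \wedge \varphi^2)^{\otimes m}$. Thus every smooth section of $\cK_X^{\otimes m}$ can be written as $\sigma = f \cdot (\varphi^1 \wedge \varphi^2)^{\otimes m}$ for a unique smooth complex-valued function $f$ on $M(k)$, and $\sigma$ is pseudoholomorphic iff $\delbar_{J_s} \sigma = 0$. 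Using the Leibniz rule together with the computed formula $\delbar(\varphi^1 \wedge \varphi^2) = \frac{\sqrt{2} k}{2\sqrt{\gamma_2}} \ii \alpha_1(1 + \ii \alpha_2)\, \varphi^1 \wedge \varphi^2 \wedge \bar\varphi^2$, one gets that $\delbar \sigma = 0$ is equivalent to a single scalar PDE of the shape $\bar{\cX}_2(f) + m \lambda_s f = 0$, where $\lambda_s = \frac{\sqrt{2}k}{2\sqrt{\gamma_2}} \ii \alpha_1(1 + \ii\alpha_2)$ (up to constants) and $\bar{\cX}_2$ is the antiholomorphic field dual to $\bar\varphi^2$, which is a constant-coefficient combination of $e_3$ and $e_4$, i.e.\ of $\partial_z$ and $\partial_t$.

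Next I would expand $f$ in the Fourier series \eqref{eq: Fourier M(k)}, $f = \sum_{I=(a,b,c)} f_I(z)\, e^{2\pi\ii\frac{1}{\delta}((av_2-bu_2)x + (-av_1+bu_1)y + c\delta t)}$, and plug this into the scalar equation. Because $\bar{\cX}_2$ only involves $\partial_z$ and $\partial_t$ and the coefficient $\lambda_s$ is constant, the equation decouples over the Fourier modes $I$: for each $I$ one obtains a linear first-order ODE in $z$ for $f_I$ of the form $f_I'(z) + (A_I + m B) f_I(z) = 0$ for suitable constants $A_I$ (linear in $c$ and in the frequencies, with a nonzero coefficient of $\sqrt{\gamma_2}$ from the $e_3$-part of $\bar{\cX}_2$) and $B$ proportional to $\alpha_1(1+\ii\alpha_2)$. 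The general solution is $f_I(z) = f_I(0) e^{-(A_I + mB)z}$; the point is then to decide for which $I$ this is compatible with the quasi-periodicity constraint coming from the action of $E_n$ (the $\gamma$-part of the lattice relation), which forces $f_I(z+n) = f_{I'}(z)$ for the mode $I'$ obtained by the dual action on frequencies, and in particular $f_I$ must be bounded. An exponential $e^{-(A_I+mB)z}$ is bounded on all of $\R$ only if $\Re(A_I + mB) = 0$.

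The case distinction in the statement then emerges as follows. When $\alpha_1\alpha_2 = 0$: if $\alpha_1 = 0$ then $B = 0$, and one checks that only the mode with all frequencies zero (so $A_I = 0$, $f_I = \const$) survives, giving $P_m = 1$ for all $m$, hence $\kod = 0$; the section is the constant multiple of $(\varphi^1\wedge\varphi^2)^{\otimes m}$. If $\alpha_2 = 0$ but $\alpha_1 \neq 0$, then $B$ is purely imaginary (since $1+\ii\alpha_2 = 1$ and the prefactor $\ii\alpha_1$ is imaginary), so $\Re(A_I + mB) = \Re A_I$ can still vanish for suitable $I$, and again exactly the constant mode (or a small finite-dimensional space) survives, giving $P_m$ bounded and $\kod = 0$. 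When $\alpha_1\alpha_2 \neq 0$: now $B = \frac{\sqrt2 k}{2\sqrt{\gamma_2}}\ii\alpha_1(1+\ii\alpha_2)$ has $\Re B = -\frac{\sqrt2 k}{2\sqrt{\gamma_2}}\alpha_1\alpha_2 \neq 0$, so $\Re(A_I + mB)$, being $\Re A_I + m\Re B$ with $\Re A_I$ taking values in a fixed lattice-like set independent of $m$, cannot be zero for any $I$ once $m$ is large (and in a neighbourhood of the origin, for all $m\ge 1$); hence $f_I \equiv 0$ for every $I$, so $f \equiv 0$, $P_m = 0$, and $\kod = -\infty$.

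The main obstacle I expect is the bookkeeping around the quasi-periodicity under $E_n$: the Fourier expansion \eqref{eq: Fourier M(k)} only uses the $\gamma = 0$ part of the lattice ($\IZ^3$-periodicity in $x,y,t$), and one must carefully impose the remaining relation $f(x,y,z,t) = f(e^{kn}x, e^{-kn}y, z+n, t)$ (with its $u,v$-shifts) to see how it permutes the Fourier modes $f_I$ and what boundedness/compatibility it forces on the $z$-dependence. Handling this correctly is what pins down precisely which finitely many modes can contribute, and hence that $P_m$ is eventually constant (giving $\kod = 0$) rather than growing. The phrase ``in a suitable neighbourhood of the origin'' is presumably needed to ensure $\gamma_i > 0$ (so that $g_s$ is a genuine metric and the $\sqrt{\gamma_2}$ are real and nonzero) and to control the finitely many exceptional small $m$ in the $\alpha_1\alpha_2\neq 0$ case; I would make that explicit at the start of the argument.
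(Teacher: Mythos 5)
There is a genuine gap at the very first step of your reduction. For $\sigma = f\,(\varphi^1\wedge\varphi^2)^{\otimes m}$ the Leibniz rule gives $\delbar\sigma = \pa{\bar{\cX}_1(f)\,\bar{\varphi}^1 + \bar{\cX}_2(f)\,\bar{\varphi}^2 + m\lambda_s f\,\bar{\varphi}^2}\otimes(\varphi^1\wedge\varphi^2)^{\otimes m}$, so pseudoholomorphicity is \emph{not} the single scalar equation $\bar{\cX}_2(f)+m\lambda_s f=0$: it is the system $\bar{\cX}_1(f)=0$ \emph{and} $\bar{\cX}_2(f)+m\lambda_s f=0$ (the $\bar{\varphi}^1$-component of $\delbar f$ does not disappear just because $\delbar(\varphi^1\wedge\varphi^2)$ has only a $\bar{\varphi}^2$-component). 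Dropping $\bar{\cX}_1(f)=0$ is fatal to the rest of your plan: since $\bar{\cX}_2$ involves only $\partial_z,\partial_t$, your mode-by-mode ODE never sees the $x,y$-frequencies $(a,b)$, and the only constraint you propose on them is boundedness under the $E_n$-quasi-periodicity. That is not enough. For instance, when $\alpha_1=0$ (so $\lambda_s=0$) every mode $I=(a,b,0)$ solves your ODE with $f_I$ constant in $z$, hence bounded, so your criterion leaves an infinite-dimensional space of candidates and cannot deliver $P_m=1$; ruling these out requires either the first equation (this is exactly what the paper does: $\bar{\cX}_1(f)=0$ yields an elliptic second-order equation in $x,y$ whose periodic solutions are constant, so $f$ depends only on $z,t$, after which the second equation becomes a finite linear system per mode whose determinant is analyzed) or a genuine argument exploiting how $E_n$ permutes the modes via the hyperbolic automorphism $\phi_k(n)$ of the lattice $\IZ u+\IZ v$ — which you explicitly defer as ``the main obstacle'' rather than carry out.

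Two smaller points: for the conclusion $\kod=-\infty$ you need $P_m=0$ for \emph{every} $m\geq 1$, not only for $m$ large, so the parenthetical ``for all $m\ge 1$ in a neighbourhood of the origin'' must be proved, not asserted (in the paper the determinant computation is uniform in $m$ because the system has the same shape for all $m$); and your description of $\Re A_I$ as taking values in a ``fixed lattice-like set'' is not what the computation gives (it is a real multiple of the $t$-frequency divided by $1+\ii\alpha_2$), so the nonvanishing of $\Re(A_I+mB)$ should be checked explicitly, as the paper does by computing the determinant \eqref{eq: det1} and analyzing its real and imaginary parts separately in the cases $\alpha_1=0$, $\alpha_1\neq 0$.
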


\begin{proof}
Let $f \varphi^1 \wedge \varphi^2$ be a smooth section of the canonical bundle. This section is pseudoholomorphic if and only if
\[0 = \delbar(f \varphi^1 \wedge \varphi^2) = \bar{\cX}_1(f) \varphi^1 \wedge \varphi^2 \wedge \bar{\varphi}^1 + \bar{\cX}_2(f) \varphi^1 \wedge \varphi^2 \wedge \bar{\varphi}^2 + f \delbar(\varphi^1 \wedge \varphi^2),\]
hence if and only if
\begin{equation}\label{eq: system P1}
\left\{ \begin{array}{l}
\bar{\cX}_1(f) = 0\\
\bar{\cX}_2(f) + \frac{\sqrt{2} k}{2 \sqrt{\gamma_2}} \ii f \alpha_1 (1 + \ii \alpha_2) = 0.
\end{array} \right.
\end{equation}
Write $f = u + \ii v$, where we see $u$ and $v$ as real functions of the \emph{real} variables $x$, $y$, $z$ and $t$, which are defined on $\IR^4$ and are periodic with respect to the action of the lattice $D \times \IZ$.

The first equation in \eqref{eq: system P1} then becomes
\[0 = \bar{\cX}_1(f) = \frac{\sqrt{2}}{2}(\varepsilon_1 + \ii \varepsilon_2)(u + \ii v) = \frac{\sqrt{2}}{2}(\varepsilon_1(u) - \varepsilon_2(v) + \ii (\varepsilon_1(v) + \varepsilon_2(u))),\]
which leads us to the system
\[\left\{ \begin{array}{l}
e^{kz} \frac{\partial u}{\partial x} - \alpha_1 e^{kz} \frac{\partial v}{\partial x} - \gamma_1 e^{-kz} \frac{\partial v}{\partial y} = 0\\
e^{kz} \frac{\partial v}{\partial x} + \alpha_1 e^{kz} \frac{\partial u}{\partial x} + \gamma_1 e^{-kz} \frac{\partial u}{\partial y} = 0.
\end{array} \right.\]
We can then express $\frac{\partial u}{\partial x}$ and $\frac{\partial u}{\partial y}$ in terms of $\frac{\partial v}{\partial x}$ and $\frac{\partial v}{\partial y}$:
\[\left\{ \begin{array}{l}
\frac{\partial u}{\partial x} = \alpha_1 \frac{\partial v}{\partial x} + \gamma_1 e^{-2kz} \frac{\partial v}{\partial y}\\
\frac{\partial u}{\partial y} = e^{2kz} \beta_1 \frac{\partial v}{\partial x} - \alpha_1 \frac{\partial v}{\partial y},
\end{array} \right.\]
and so once we take the derivative with respect to $y$ of the first relation and with respect to $x$ of the second one, we see that the relation
\[\alpha_1 \frac{\partial^2 v}{\partial x \partial y} + \gamma_1 e^{-2kz} \frac{\partial^2 v}{\partial y^2} = e^{2kz} \beta_1 \frac{\partial^2 v}{\partial x^2} - \alpha_1 \frac{\partial^2 v}{\partial x \partial y}\]
must hold. Hence $v$ is a solution of the elliptic differential equation
\[\pa{-e^{2kz} \beta_1 \frac{\partial^2}{\partial x^2} + 2\alpha_1 \frac{\partial^2}{\partial x \partial y} + e^{-2kz} \gamma_1 \frac{\partial^2}{\partial y^2}}v = 0,\]
and so it must be constant with respect to $x$ and $y$ (because of the periodicity). An analogous argument shows that also $u$ must be constant with respect to $x$ and $y$.

Consider the Fourier series expansion \eqref{eq: Fourier M(k)} of $u$ and $v$. Since we know that they do not depend on $x$ and $y$, this expansion can be simplified further to
\begin{equation}\label{eq: fourier simpl}
u(x, y, z, t) = \sum_{I = (a, b) \in \IZ^2} u_I e^{2\pi \ii \pa{\frac{a}{n} z + bt}}, \qquad u_I \in \IC
\end{equation}
and similarly for $v$.

The second equation in \eqref{eq: system P1} is equivalent to the system
\[\left\{ \begin{array}{l}
\frac{\partial u}{\partial z} - \alpha_2 \frac{\partial v}{\partial z} - \gamma_2 \frac{\partial v}{\partial t} = k \alpha_1 (\alpha_2 u + v)\\
\frac{\partial v}{\partial z} + \alpha_2 \frac{\partial u}{\partial z} + \gamma_2 \frac{\partial u}{\partial t} = k \alpha_1 (\alpha_2 v - u),
\end{array} \right.\]
in view of \eqref{eq: fourier simpl} we obtain the following system for $(u_I, v_I)$
\[\left\{ \begin{array}{l}
2\pi \ii \frac{a}{n} u_I - 2\pi \ii \frac{a}{n} \alpha_2 v_I - 2\pi \ii b \gamma_2 v_I - k\alpha_1 \alpha_2 u_I - k\alpha_1 v_I = 0\\
2\pi \ii \frac{a}{n} v_I + 2\pi \ii \frac{a}{n} \alpha_2 u_I + 2\pi \ii b \gamma_2 u_I - k\alpha_1 \alpha_2 v_I + k\alpha_1 u_I = 0.
\end{array} \right.\]
This is a homogeneous linear system, whose representing matrix has determinant
\begin{equation}\label{eq: det1}
-4\pi^2 \frac{a^2}{n^2} + k^2 \alpha_1^2(\alpha_2^2 + 1) - 4 \pi^2 \pa{\frac{a}{n}\alpha_2 + b\gamma_2}^2 + 4\pi \ii b k \alpha_1 \gamma_2.
\end{equation}
We want to determine when this determinant vanishes.
\begin{enumerate}
\item If $\alpha_1 = 0$, the determinant \eqref{eq: det1} vanishes if and only if $a = b = 0$.\\
In fact for $\alpha_1 = 0$ the imaginary part of \eqref{eq: det1} is zero and its real part is $-4\pi^2 \pa{\frac{a^2}{n^2} + \pa{\frac{a}{n} \alpha_2 + b \gamma_2}^2}$. So this determinant vanishes if and only if $a = b\gamma_2 = 0$, which happens if and only if $a = b = 0$ in a suitable neighbourhood of the origin.
\item If $\alpha_1 \neq 0$, the determinant \eqref{eq: det1} is always non-zero.\\
In fact, if $\alpha_1 \neq 0$ then in a neighbourhood of the origin the imaginary part of \eqref{eq: det1} vanishes if and only if $b = 0$. In this case the real part of \ref{eq: det1} is
\[(1 + \alpha_2^2)\pa{k \alpha_1 + 2\pi \frac{a}{n}}\pa{k \alpha_1 - 2\pi \frac{a}{n}},\]
which is zero if and only if $a = \pm \frac{k \alpha_1 n}{2 \pi}$. In a suitable neighbourhood of the origin we have $\abs{\frac{k \alpha_1 n}{2 \pi}} < 1$, so on such a neighbourhood the only possibility is $a = 0$. But then this implies that $\alpha_1 = 0$, which is not the case.
\end{enumerate}

We can summarize this result saying that $f$ is a solution of \eqref{eq: system P1} if and only if
\[\left\{ \begin{array}{ll}
f = \const & \text{if } \alpha_1 = 0,\\
f = 0 & \text{if } \alpha_1 \neq 0.
\end{array} \right.\]

To conclude the computation of the Kodaira dimension of $(M(k), J_s)$, we observe that it is not difficult to see that
\[\delbar((\varphi^1 \wedge \varphi^2)^{\tensor m}) = \frac{\sqrt{2} k}{2 \sqrt{\gamma_2}} \ii \alpha_1 (1 + \ii \alpha_2) m \bar{\varphi}^2 \otimes (\varphi^1 \wedge \varphi^2)^{\tensor m}\]
as a section of ${T^*}^{0, 1} M(k) \tensor \pa{\wedge^{2, 0} T^* M(k)}^{\tensor m}$. Hence $\delbar(f \cdot (\varphi^1 \wedge \varphi^2)^{\tensor m}) = 0$ leads us to a situation which is completely analogous to the one described in \eqref{eq: system P1}. As a consequence, we can claim that
\[P_m(M(k), J_s) = \left\{ \begin{array}{ll}
0 & \text{if } \alpha_1 \neq 0,\\
1 & \text{if } \alpha_1 = 0,
\end{array} \right.\]
and so
\[\kappa(M(k), J_s) = \left\{ \begin{array}{ll}
-\infty & \text{if } \alpha_1 \neq 0,\\
0 & \text{if } \alpha_1 = 0.
\end{array} \right.\]
\end{proof}

Consider now the family $(M(k), J_r)$. A $g_0$-orthonormal basis in which the almost complex structures $J_r$ are in canonical form is then given by
\[\begin{array}{ll}
\varepsilon_1 = \sqrt{\alpha}((1 - r) e_1 + r e_2 - r e_3), & \varepsilon_3 = \sqrt{\alpha}(r e_1 + (1 - r) e_3 + r e_4)\\
\varepsilon_2 = \sqrt{\alpha}(-r e_1 + (1 - r) e_2 + r e_4), & \varepsilon_4 = \sqrt{\alpha}(-r e_2 - r e_3 + (1 - r) e_4),
\end{array}\]
whose corresponding dual frame is
\[\begin{array}{ll}
\varepsilon^1 = \sqrt{\alpha}((1 - r) e^1 + r e^2 - r e^3), & \varepsilon^3 = \sqrt{\alpha}(r e^1 + (1 - r) e^3 + r e^4)\\
\varepsilon^2 = \sqrt{\alpha}(-r e^1 + (1 - r) e^2 + r e^4), & \varepsilon_4 = \sqrt{\alpha}(-r e^2 - r e^3 + (1 - r) e^4).
\end{array}\]

It follows that
\[\cX_1 = \frac{\sqrt{2}}{2}\pa{\varepsilon_1 - \ii \varepsilon_2}, \qquad \cX_2 = \frac{\sqrt{2}}{2}\pa{\varepsilon_3 - \ii \varepsilon_4}\]
is an $h$-unitary frame of fields of type $(1, 0)$, where $h$ is the Hermitian form associated to $g$ and $J_r$, and that
\[\varphi^1 = \frac{\sqrt{2}}{2}\pa{\varepsilon^1 + \ii \varepsilon^2}, \qquad \varphi^2 = \frac{\sqrt{2}}{2}\pa{\varepsilon^3 + \ii \varepsilon^4}\]
is the corresponding coframe of forms of type $(1, 0)$. A computation yields
\[\begin{array}{rl}
d\varphi^1 = & \frac{\sqrt{2}}{2} \alpha \sqrt{\alpha} k (1 - r - \ii r) (r^2 \varphi^1 \wedge \varphi^2 +\\
 & + r(1 - r - \ii r) \varphi^1 \wedge \bar{\varphi}^1 +\\
 & - ((1 - r)^2 + 2r^2) \varphi^2 \wedge \bar{\varphi}^1 +\\
 & + r(1 - r - \ii r) \varphi^2 \wedge \bar{\varphi}^2 +\\
 & + (1 - r - \ii r)^2 \bar{\varphi}^1 \wedge \bar{\varphi}^2),\\
d\varphi^2 = & \frac{\sqrt{2}}{2} \alpha \sqrt{\alpha} k r ((1 - r + \ii r)^2 \varphi^1 \wedge \varphi^2 +\\
 & - r(1 - r + \ii r) \varphi^1 \wedge \bar{\varphi}^1 +\\
 & + \frac{1}{\alpha} \varphi^1 \wedge \bar{\varphi}^2 +\\
 & - r(1 - r + \ii r) \varphi^2 \wedge \bar{\varphi}^2 +\\
 & + r^2 \bar{\varphi}^1 \wedge \bar{\varphi}^2.
\end{array}\]

We can then see that
\[\delbar(\varphi^1 \wedge \varphi^2) = \underbrace{-\frac{\sqrt{2}}{2} \alpha \sqrt{\alpha} k r ((1 - r - \ii r)^2 \bar{\varphi}^1 - r(1 - r + \ii r) \bar{\varphi}^2)}_{\tau} \wedge \varphi^1 \wedge \varphi^2.\]
As a consequence
\[\delbar\pa{(\varphi^1 \wedge \varphi^2)^{\tensor m}} = m\tau \tensor (\varphi^1 \wedge \varphi^2)^{\tensor m}, \qquad \text{for all } m \geq 1,\]
and so we see that the smooth pluricanonical section $(\varphi^1 \wedge \varphi^2)^{\tensor m}$ is pseudoholomorphic if and only if $r = 0$.

\begin{prop}\label{prop: kod dim m(k) 2}
We have the following:
\[P_m(M(k), J_r) = \left\{ \begin{array}{ll}
0 & \text{if } r = 1,\\
1 & \text{if } r = 0,
\end{array} \right.\]
and so
\[\kod(M(k), J_r) = \left\{ \begin{array}{ll}
-\infty & \text{if } r = 1,\\
0 & \text{if } r = 0.
\end{array} \right.\]
\end{prop}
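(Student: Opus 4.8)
The plan is to mimic the structure of the proof of Proposition~\ref{prop: kod dim m(k) 1}, exploiting that the algebra has already been set up: the relevant data is the computation of $\delbar$ on pluricanonical sections, which has been reduced to $\delbar\pa{(\varphi^1 \wedge \varphi^2)^{\tensor m}} = m\tau \tensor (\varphi^1 \wedge \varphi^2)^{\tensor m}$ with $\tau = -\frac{\sqrt{2}}{2} \alpha \sqrt{\alpha} k r ((1 - r - \ii r)^2 \bar{\varphi}^1 - r(1 - r + \ii r) \bar{\varphi}^2)$. First I would dispose of the case $r = 0$: there $\tau = 0$, the section $(\varphi^1 \wedge \varphi^2)^{\tensor m}$ is pseudoholomorphic, and since $J_0$ is the same almost complex structure appearing in both families (the one defined by $s = 0$), the value $P_m(M(k), J_0) = 1$ and $\kod(M(k), J_0) = 0$ is already known from Proposition~\ref{prop: kod dim m(k) 1} with $\alpha_1 = \alpha_2 = 0$; alternatively one redoes the short argument that a $\delbar$-closed section $f(\varphi^1\wedge\varphi^2)^{\tensor m}$ forces $f$ constant via the same elliptic-equation-plus-periodicity reasoning.

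For $r \neq 0$ the claim is $P_m(M(k), J_r) = 0$ for all $m \geq 1$, i.e.\ there is no nonzero pseudoholomorphic pluricanonical section. Writing a general smooth section as $f \cdot (\varphi^1 \wedge \varphi^2)^{\tensor m}$ with $f: M(k) \to \IC$, the condition $\delbar\pa{f \cdot (\varphi^1 \wedge \varphi^2)^{\tensor m}} = 0$ becomes
\[\bar{\cX}_1(f)\, \bar{\varphi}^1 \tensor (\varphi^1\wedge\varphi^2)^{\tensor m} + \bar{\cX}_2(f)\, \bar{\varphi}^2 \tensor (\varphi^1\wedge\varphi^2)^{\tensor m} + m f\, \tau \tensor (\varphi^1\wedge\varphi^2)^{\tensor m} = 0,\]
which, comparing the $\bar{\varphi}^1$ and $\bar{\varphi}^2$ components, is equivalent to the first-order linear system
\[\left\{ \begin{array}{l}
\bar{\cX}_1(f) = \frac{\sqrt{2}}{2}\, m\, \alpha\sqrt{\alpha}\, k\, r\, (1 - r - \ii r)^2\, f,\\
\bar{\cX}_2(f) = -\frac{\sqrt{2}}{2}\, m\, \alpha\sqrt{\alpha}\, k\, r^2\, (1 - r + \ii r)\, f.
\end{array} \right.\]
The strategy is then exactly the Fourier-analytic method of the previous proof: express $\bar{\cX}_1, \bar{\cX}_2$ in terms of $\varepsilon_1,\dots,\varepsilon_4$ and hence of $\partial_x, \partial_y, \partial_z, \partial_t$ (using the explicit formulas for $\varepsilon_i$ in terms of $e_i$ and \eqref{eq: fields M(k)}), split $f = u + \ii v$ into real and imaginary parts, and extract from the first equation an elliptic second-order PDE in the $x,y$ variables whose only $D\times\IZ$-periodic solutions are independent of $x$ and $y$. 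Then one inserts the reduced Fourier expansion \eqref{eq: fourier simpl} of $f$ into the second equation, obtaining for each multi-index a homogeneous $2\times 2$ linear system in $(u_I, v_I)$, and one checks that for $r \neq 0$ (at least in a neighbourhood of the relevant parameter, or for all $r\neq 0$ directly) the determinant is nonvanishing for every $I$, so that $f \equiv 0$. Combining the two cases gives the stated values of $P_m$, and $\kod(M(k), J_r)$ follows from the definition: $-\infty$ when all plurigenera vanish, and $\limsup_m \frac{\log 1}{\log m} = 0$ when they all equal $1$.

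The main obstacle I anticipate is the determinant computation in the $r \neq 0$ case. Because $\bar{\cX}_1$ and $\bar{\cX}_2$ here involve all four of the $e_i$ (the change of frame is a genuine rotation mixing every coordinate, unlike the block-diagonal situation for $J_s$), the system coupling the Fourier coefficients will be messier than \eqref{eq: det1}, and one must argue carefully that the real and imaginary parts of the determinant cannot vanish simultaneously for any $(a,b,c) \in \IZ^3$; the zeroth-order terms coming from $\tau$ (which are $\neq 0$ precisely because $r \neq 0$) are what prevents a nontrivial kernel, so the estimate has to isolate those terms. A secondary point requiring care is checking that the reduction ``$f$ does not depend on $x,y$'' still goes through — that is, that the relevant second-order operator extracted from the first equation is still elliptic for $r\neq 0$ — but this should follow from the same positivity of the quadratic form $(1-r)^2 + 2r^2$ that makes $\alpha$ well-defined, so I expect no real trouble there.
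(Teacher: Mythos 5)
Your treatment of the case $r=0$ and the final passage from plurigenera to $\kod$ are fine, but the core of your plan for $r\neq 0$ has a genuine gap: you propose to extract from the first equation $\bar{\cX}_1(f)=\tfrac{\sqrt2}{2}m\alpha\sqrt{\alpha}kr(1-r-\ii r)^2 f$ an elliptic second-order operator \emph{in the variables $x,y$ alone}, conclude that $f$ is independent of $x$ and $y$, and only then use the reduced expansion \eqref{eq: fourier simpl}. That reduction worked for $J_s$ precisely because $J_s$ is block-diagonal, so that $\bar{\cX}_1$ there involved only $e_1,e_2$, i.e.\ only $\partial_x,\partial_y$. For $J_r$ with $r\neq 0$ this structure is destroyed: $\varepsilon_1=\sqrt{\alpha}((1-r)e_1+re_2-re_3)$ and $\varepsilon_2=\sqrt{\alpha}(-re_1+(1-r)e_2+re_4)$ contain $e_3=\partial_z$ and $e_4=\partial_t$ with coefficients proportional to $r$, so the first equation mixes all four coordinate derivatives and no second-order operator in $x,y$ only can be isolated from it. The ellipticity issue you flag as ``secondary'' is in fact the point where the argument breaks: the $x,y$-independence of $f$ cannot be obtained this way, and without it you are not even entitled to the expansion \eqref{eq: fourier simpl} (periodicity of $f$ in $z$ only holds once the $x,y$-dependence is gone).

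The paper's proof proceeds differently and avoids this. It keeps all four real equations, solves them for $e_1(u),\dots,e_4(u)$ in terms of the derivatives of $v$ and zeroth-order terms (system \eqref{eq: system M(k)}), and then expands $u,v$ in the Fourier series \eqref{eq: Fourier M(k)} in $x,y,t$ with $z$-dependent coefficients $u_I(z),v_I(z)$. This turns \eqref{eq: system M(k)} into a linear system in $(u_I,v_I,u_I',v_I')$ whose determinant at $r=0$ equals $\frac{4\pi^2}{e^{2kz}\delta^2}\pa{(v_1^2+e^{2kz}v_2^2)a^2-2(u_1v_1+u_2v_2)ab+(u_1^2+e^{2kz}u_2^2)b^2}$, nonzero unless $a=b=0$; a continuity argument in $r$ then kills all coefficients with $(a,b)\neq(0,0)$, so $u,v$ depend only on $z,t$. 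Only at this stage do the zeroth-order terms coming from $\tau$ enter: the remaining two equations yield a second-order equation in $z,t$ forcing $u,v$ constant, and plugging constants back into \eqref{eq: system M(k)} shows they must vanish when $r\neq 0$. So the terms you expected to ``prevent a nontrivial kernel'' in the Fourier determinant actually play their role only in this last constant-solution step, while the determinant that eliminates the $x,y$-dependence is controlled at $r=0$ and extended by continuity. If you want to push your route through, you would have to replace your first step by something like this four-variable Fourier/ODE analysis; as written, the proposal does not close.
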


\begin{proof}
This proposition is a particular case of Proposition \ref{prop: twistor on M(k)} (see also Remark \ref{rem: path in twistor}) for $r = 1$ and a particular case of Proposition \ref{prop: kod dim m(k) 1} for $r = 0$.
\end{proof}

\subsection{Curvature of the canonical connection}

The manifold $(M(k), J_s, g_s)$ is an almost K\"ahler manifold for every (small) value of the parameter $s$. As a consequence, we can use the same strategy used in \cite{Cattaneo-Nannicini-Tomassini} to compute the Chern--Ricci tensor and the scalar curvature of the canonical connection of these almost K\"ahler metrics.

Direct computations give:
\begin{lemma}
The Nijenhuis tensor of $J_s$ is the following:
\[\begin{array}{l}
N_{J_s}(\varepsilon_1, \varepsilon_2) = 0,\\
N_{J_s}(\varepsilon_1, \varepsilon_3) = 2 \frac{k}{\sqrt{\gamma_2}} (1 - \alpha_1 \alpha_2) \varepsilon_1 + 2 \frac{k}{\sqrt{\gamma_2}} (\alpha_1 + \alpha_2) \varepsilon_2,\\
N_{J_s}(\varepsilon_1, \varepsilon_4) = 2 \frac{k}{\sqrt{\gamma_2}} (\alpha_1 + \alpha_2) \varepsilon_1 - 2 \frac{k}{\sqrt{\gamma_2}} (1 - \alpha_1 \alpha_2) \varepsilon_2,\\
N_{J_s}(\varepsilon_2, \varepsilon_3) = 2 \frac{k}{\sqrt{\gamma_2}} (\alpha_1 + \alpha_2) \varepsilon_1 - 2 \frac{k}{\sqrt{\gamma_2}} (1 - \alpha_1 \alpha_2) \varepsilon_2,\\
N_{J_s}(\varepsilon_2, \varepsilon_4) = -2 \frac{k}{\sqrt{\gamma_2}} (1 - \alpha_1 \alpha_2) \varepsilon_1 - 2 \frac{k}{\sqrt{\gamma_2}} (\alpha_1 + \alpha_2) \varepsilon_2,\\
N_{J_s}(\varepsilon_3, \varepsilon_4) = 0.
\end{array}\]
\end{lemma}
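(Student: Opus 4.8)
The plan is to reduce the six component identities to a single genuine computation by exploiting the standard algebraic symmetries of the Nijenhuis tensor, and then to evaluate that one component directly from the definition using the structure equations already recorded for the frame $\{\varepsilon_1,\varepsilon_2,\varepsilon_3,\varepsilon_4\}$.

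First I would recall that, by construction, $J_s$ acts in the standard way on this $g_s$-orthonormal frame, namely $J_s\varepsilon_1=\varepsilon_2$, $J_s\varepsilon_2=-\varepsilon_1$, $J_s\varepsilon_3=\varepsilon_4$ and $J_s\varepsilon_4=-\varepsilon_3$. Next I would use the two elementary facts valid for any almost complex structure $J$: the tensor $N_J$ is skew-symmetric, and it satisfies $N_J(JX,Y)=N_J(X,JY)=-J\,N_J(X,Y)$. The first already gives $N_{J_s}(\varepsilon_1,\varepsilon_2)=N_{J_s}(\varepsilon_1,J_s\varepsilon_1)=0$ and, likewise, $N_{J_s}(\varepsilon_3,\varepsilon_4)=0$. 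The second yields
\[N_{J_s}(\varepsilon_2,\varepsilon_3)=N_{J_s}(\varepsilon_1,\varepsilon_4)=-J_s\,N_{J_s}(\varepsilon_1,\varepsilon_3),\qquad N_{J_s}(\varepsilon_2,\varepsilon_4)=-N_{J_s}(\varepsilon_1,\varepsilon_3),\]
so that every component is determined by $N_{J_s}(\varepsilon_1,\varepsilon_3)$; applying $-J_s$ (respectively multiplying by $-1$) to the claimed value of $N_{J_s}(\varepsilon_1,\varepsilon_3)$ and using $J_s\varepsilon_1=\varepsilon_2$, $J_s\varepsilon_2=-\varepsilon_1$ reproduces exactly the three remaining asserted formulas.

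It then remains to compute, directly from the definition,
\[N_{J_s}(\varepsilon_1,\varepsilon_3)=[J_s\varepsilon_1,J_s\varepsilon_3]-[\varepsilon_1,\varepsilon_3]-J_s[J_s\varepsilon_1,\varepsilon_3]-J_s[\varepsilon_1,J_s\varepsilon_3]=[\varepsilon_2,\varepsilon_4]-[\varepsilon_1,\varepsilon_3]-J_s[\varepsilon_2,\varepsilon_3]-J_s[\varepsilon_1,\varepsilon_4].\]
Substituting the commutators listed just above the statement and using $J_s\varepsilon_1=\varepsilon_2$, $J_s\varepsilon_2=-\varepsilon_1$ to evaluate the two $J_s$-terms, I would then collect the coefficients of $\varepsilon_1$ and of $\varepsilon_2$: the $\varepsilon_1$-coefficient equals $\frac{k}{\sqrt{\gamma_2}}(-2\alpha_1\alpha_2+1+1)=\frac{2k}{\sqrt{\gamma_2}}(1-\alpha_1\alpha_2)$ and the $\varepsilon_2$-coefficient equals $\frac{k}{\sqrt{\gamma_2}}(\alpha_2+2\alpha_1+\alpha_2)=\frac{2k}{\sqrt{\gamma_2}}(\alpha_1+\alpha_2)$, which is precisely the asserted formula.

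There is no genuine obstacle here: the computation is entirely routine, and the only thing to watch is signs — both in the two $-J_s(\cdot)$ terms of the Nijenhuis expression and in the repeated use of $N_J(JX,\cdot)=-J\,N_J(X,\cdot)$ when passing to the remaining components. Checking the $\varepsilon_1/\varepsilon_2$ bookkeeping once for $N_{J_s}(\varepsilon_1,\varepsilon_3)$ and then deducing everything else formally is the cleanest way to avoid sign errors.
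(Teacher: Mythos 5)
Your proof is correct and takes essentially the same route as the paper, which simply asserts these formulas as ``direct computations'' from the commutators of the $\varepsilon_i$ and the standard form of $J_s$ on this frame. Your use of the identities $N_{J_s}(J_sX,Y)=N_{J_s}(X,J_sY)=-J_s\,N_{J_s}(X,Y)$ to reduce everything to the single component $N_{J_s}(\varepsilon_1,\varepsilon_3)$ is a valid shortcut, and the component you compute (and the ones you derive from it) agree exactly with the Lemma.
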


By \cite[Corollary 3.8]{Cattaneo-Nannicini-Tomassini} we can then deduce the real torsion forms of the canonical connection:
\[\begin{array}{l}
\Theta_\IR^1 = \frac{k}{2 \sqrt{\gamma_2}} (1 - \alpha_1 \alpha_2)(\varepsilon^1 \wedge \varepsilon^3 - \varepsilon^2 \wedge \varepsilon^4) + \frac{1}{2 \sqrt{\gamma_2}} (\alpha_1 + \alpha_2)(\varepsilon^1 \wedge \varepsilon^4 + \varepsilon^2 \wedge \varepsilon^3),\\
\Theta_\IR^2 = \frac{k}{2 \sqrt{\gamma_2}} (\alpha_1 + \alpha_2)(\varepsilon^1 \wedge \varepsilon^3 - \varepsilon^2 \wedge \varepsilon^4) - \frac{1}{2 \sqrt{\gamma_2}} (1 - \alpha_1 \alpha_2)(\varepsilon^1 \wedge \varepsilon^4 + \varepsilon^2 \wedge \varepsilon^3),\\
\Theta_\IR^3 = 0,\\
\Theta_\IR^4 = 0.\\
\end{array}\]

The complex torsion forms are easy to compute from these ones:
\[\begin{array}{l}
\Theta^1 = \frac{\sqrt{2}}{2} \pa{\Theta_\IR^1 + \ii \Theta_\IR^2} = \frac{\sqrt{2} k}{2 \sqrt{\gamma_2}} (1 + \ii \alpha_1)(1 + \ii \alpha_2) \bar{\varphi}^1 \wedge \bar{\varphi}^2,\\
\Theta^2 = \frac{\sqrt{2}}{2} \pa{\Theta_\IR^3 + \ii \Theta_\IR^4} = 0.
\end{array}\]
To deduce the complex connection forms, we have to solve the system given by the first structure equations:
\[\left\{ \begin{array}{l}
d\varphi^1 + \vartheta^1_1 \wedge \varphi^1 + \vartheta^1_2 \wedge \varphi^2 = \Theta^1\\
d\varphi^2 + \vartheta^2_1 \wedge \varphi^1 + \vartheta^2_2 \wedge \varphi^2 = \Theta^2\\
\vartheta^1_1 + \overline{\vartheta^1_1} = \vartheta^2_1 + \overline{\vartheta^1_2} = \vartheta^2_2 + \overline{\vartheta^2_2} = 0,
\end{array} \right.\]
whose solution is
\[\begin{array}{l}
\vartheta^1_1 = -\frac{\sqrt{2} k}{2 \sqrt{\gamma_2}} \ii \alpha_1 (1 - \ii \alpha_2) \varphi^2 - \frac{\sqrt{2} k}{2 \sqrt{\gamma_2}} \ii \alpha_1 (1 + \ii \alpha_2) \bar{\varphi}^2,\\
\vartheta^1_2 = -\frac{\sqrt{2} k}{2 \sqrt{\gamma_2}} (1 + \ii \alpha_1)(1 + \ii \alpha_2) \bar{\varphi}^1,\\
\vartheta^2_1 = \frac{\sqrt{2} k}{2 \sqrt{\gamma_2}} (1 - \ii \alpha_1)(1 - \ii \alpha_2) \varphi^1,\\
\vartheta^2_2 = 0.
\end{array}\]
From the knowledge of the connection forms, we can compute the curvature forms $\psi^i_j$ by means of the second structure equations, and we see that
\[\begin{array}{rl}
\psi^1_1 = & \frac{k^2}{2 \gamma_2}(1 + \alpha_1^2)(1 + \alpha_2^2) \varphi^1 \wedge \bar{\varphi}^1,\\
\psi^2_1 = & -\frac{k^2}{\gamma_2} \ii \alpha_1 (1 - \ii \alpha_1)(1 - \ii \alpha_2)^2 \varphi^1 \wedge \varphi^2 +\\
 & - \frac{k^2}{\gamma_2} \ii \alpha_1 (1 - \ii \alpha_1)(1 + \alpha_2^2) \varphi^1 \wedge \bar{\varphi}^2 +\\
 & -\frac{k^2}{2 \gamma_2}(1 + \alpha_1^2)(1 - \ii \alpha_2)^2 \varphi^2 \wedge \bar{\varphi}^1 +\\
 & + \frac{k^2}{2 \gamma_2}(1 + \alpha_1^2)(1 + \alpha_2^2) \bar{\varphi}^1 \wedge \bar{\varphi}^2,\\
\psi^1_2 = & -\overline{\psi^2_1},\\
\psi^2_2 = & -\frac{k^2}{2 \gamma_2}(1 + \alpha_1^2)(1 + \alpha_2^2) \varphi^1 \wedge \bar{\varphi}^1.\\
\end{array}\]

We can state the following.
\begin{prop}\label{Chern-flat}
All the manifolds $(M(k), J_s, g_s)$ in our family are Chern--Ricci flat, hence they also have vanishing scalar curvature.
\end{prop}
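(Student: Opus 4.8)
The plan is to extract the Chern--Ricci form directly from the curvature forms $\psi^i_j$ of the canonical connection $\ccon$ already displayed above. Recall that the Chern--Ricci form of $\ccon$ on an almost Hermitian manifold is, up to the standard normalisation, the trace of the curvature matrix, i.e.\ $\rho^c = \ii\,(\psi^1_1 + \psi^2_2)$, and that the associated Chern scalar curvature $\scal^c$ is obtained from $\rho^c$ by contraction with $g_s$; for $n = 2$ this amounts to the relation $\rho^c \wedge \omega = \tfrac{1}{2}\,\scal^c\,\omega \wedge \omega$.

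First I would simply add the two diagonal curvature forms from the list above,
\[\psi^1_1 = \frac{k^2}{2\gamma_2}(1+\alpha_1^2)(1+\alpha_2^2)\,\varphi^1 \wedge \bar{\varphi}^1, \qquad \psi^2_2 = -\frac{k^2}{2\gamma_2}(1+\alpha_1^2)(1+\alpha_2^2)\,\varphi^1 \wedge \bar{\varphi}^1,\]
and observe that $\psi^1_1 + \psi^2_2 = 0$ identically, for every admissible value of the parameter $s$. Hence $\rho^c \equiv 0$, which is precisely the assertion that $(M(k), J_s, g_s)$ is Chern--Ricci flat. Since $\scal^c$ is recovered from $\rho^c$ by tracing against the metric (equivalently, wedging with $\omega$), the vanishing of $\rho^c$ forces $\scal^c = 0$ as well, giving the second statement.

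The computation presents essentially no obstacle beyond bookkeeping: the substantive work --- deriving the real torsion forms $\Theta_\IR^i$, the complex connection forms $\vartheta^i_j$, and finally the curvature forms $\psi^i_j$ via the first and second structure equations --- has already been carried out, and the proposition reduces to the visible cancellation $\psi^1_1 = -\psi^2_2$. The one point that deserves care is consistency of conventions: one should make sure that the normalisation adopted here for $\rho^c$ and $\scal^c$ matches the one used in \cite{Cattaneo-Nannicini-Tomassini}, from which the torsion and connection forms were imported, so that ``trace of the curvature'' really yields the Chern--Ricci tensor in the intended sense. With that pinned down, the proof is immediate.
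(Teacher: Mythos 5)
Your proposal is correct and follows essentially the same route as the paper: the paper's Chern--Ricci components are $R_{k\bar{l}} = \sum_i R^i_{ik\bar{l}}$, i.e.\ the $(1,1)$-coefficients of the trace $\psi^1_1 + \psi^2_2$, and its proof amounts precisely to the cancellation $\psi^2_2 = -\psi^1_1$ that you point out. Your remark about tracing against $g_s$ (equivalently wedging with $\omega$) to get the vanishing of the scalar curvature matches the paper's ``hence they also have vanishing scalar curvature''.
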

\begin{proof}
To compute the component $R_{k\bar{l}}$ of the Chern--Ricci tensor we recall that $R^i_{jk\bar{l}}$ is the coefficient of $\varphi^k \wedge \bar{\varphi}^l$ in $\psi^i_j$ and that $R_{k\bar{l}} = \sum_i R^i_{ik\bar{l}}$. But the only non-vanishing coefficients among the $R^i_{jk\bar{l}}$ are
\[\begin{array}{ll}
R^1_{11\bar{1}} = \frac{k^2}{2 \gamma_2}(1 + \alpha_1^2)(1 + \alpha_2^2), & R^1_{21\bar{2}} = -\frac{k^2}{2 \gamma_2} (1 + \alpha_1^2)(1 + \ii \alpha_2)^2,\\
R^1_{22\bar{1}} = \frac{k^2}{\gamma_2} \ii \alpha_1 (1 + \ii \alpha_1)(1 + \alpha_2^2), & R^2_{11\bar{2}} = -\frac{k^2}{\gamma_2} \ii \alpha_1 (1 - \ii \alpha_1)(1 + \alpha_2^2),\\
R^2_{12\bar{1}} = -\frac{k^2}{2 \gamma_2} (1 + \alpha_1^2)(1 - \ii \alpha_2)^2, & R^2_{21\bar{1}} = -\frac{k^2}{2 \gamma_2}(1 + \alpha_1^2)(1 + \alpha_2^2),
\end{array}\]
hence it is now easy to see that $R_{k\bar{l}} = 0$ for all $k, l = 1, 2$.
\end{proof}

\section{Special almost complex structures on \texorpdfstring{$\cN$}{N}}\label{s6}

\subsection{Kodaira dimension}
Let $J$ be the almost complex structure on $\cN$ which acts on the tangent fields \eqref{eq: tangent fields N} as
\begin{equation}\label{almost-kaehler-nil}
  J e_1 = e_4, \qquad J e_2 = e_3, \qquad J e_3 = -e_2, \qquad J e_4 = -e_1.
\end{equation}
We can then define the fields of type $(1, 0)$
\begin{equation}\label{eq: (1,0) fields}
\cX_1 = \frac{1}{2} (e_1 - \ii e_4), \qquad \cX_2 = \frac{1}{2} (e_2 - \ii e_3,)
\end{equation}
and the corresponding dual $(1, 0)$-forms
\[\varphi^1 = e^1 + \ii e^4, \qquad \varphi^2 = e^2 + \ii e^3.\]

It is then a straightforward computation with the structure equations \eqref{eq: structure equations gothM} to see that
\[\begin{array}{l}
d\varphi^1 = -\frac{1}{4} (\varphi^1 \wedge \varphi^2 - \varphi^1 \wedge \bar{\varphi}^2 - \varphi^2 \wedge \bar{\varphi}^1 - \bar{\varphi}^1 \wedge \bar{\varphi}^2),\\
d\varphi^2 = -\frac{\ii}{4} (\varphi^1 \wedge \varphi^2 + \varphi^1 \wedge \bar{\varphi}^2 - \varphi^2 \wedge \bar{\varphi}^1 + \bar{\varphi}^1 \wedge \bar{\varphi}^2),
\end{array}\]
hence that $J$ is not integrable, and
\[\delbar(\varphi^1 \wedge \varphi^2) = -\frac{1}{4}(\ii \bar{\varphi}^1 + \bar{\varphi}^2) \wedge \varphi^1 \wedge \varphi^2.\]

\begin{prop}\label{prop: kod dim n 1}
We have the following:
\[\kod(\cN, J) = -\infty.\]
\end{prop}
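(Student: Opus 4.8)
The plan is to compute the plurigenera $P_m(\cN, J)$ directly and show that they all vanish, so that $\kod(\cN, J) = -\infty$ by definition. From the structure equations computed above, a smooth $m$-canonical section is $f\,(\varphi^1\wedge\varphi^2)^{\otimes m}$ for $f: \cN\to\IC$ smooth, and using $\delbar(\varphi^1\wedge\varphi^2) = -\tfrac14(\ii\bar\varphi^1+\bar\varphi^2)\wedge\varphi^1\wedge\varphi^2$ together with the Leibniz rule, we obtain
\[\delbar\bigl(f\,(\varphi^1\wedge\varphi^2)^{\otimes m}\bigr) = \Bigl(\bar{\cX}_1(f) - \tfrac{m\ii}{4}f\Bigr)\bar\varphi^1\otimes(\varphi^1\wedge\varphi^2)^{\otimes m} + \Bigl(\bar{\cX}_2(f) - \tfrac{m}{4}f\Bigr)\bar\varphi^2\otimes(\varphi^1\wedge\varphi^2)^{\otimes m},\]
where I should double-check the exact constants; pseudoholomorphicity is then equivalent to the system $\bar{\cX}_1(f) = \tfrac{m\ii}{4}f$ and $\bar{\cX}_2(f) = \tfrac{m}{4}f$. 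Writing $f = u+\ii v$ with $u,v$ real and using \eqref{eq: (1,0) fields} to express $\bar{\cX}_1 = \tfrac12(e_1+\ii e_4)$, $\bar{\cX}_2 = \tfrac12(e_2+\ii e_3)$, this splits into four real first-order PDEs in the real coordinates $x,y,z,t$, after substituting the explicit fields \eqref{eq: tangent fields N}.

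Next I would invoke the Fourier expansion \eqref{eq: Fourier N}: since $u$ and $v$ are functions on $\cN$, hence lattice-invariant functions on $\IR^4$ periodic of period $2$ in $y,z,t$, we can write them as $\sum_{I=(a,b,c)\in\IZ^3} u_I(x)\,e^{\ii\pi(ay+bz+ct)}$ and similarly for $v$. Substituting into the real system and collecting the coefficient of each character $e^{\ii\pi(ay+bz+ct)}$ produces, for every $I$, a system of first-order linear ODEs in the single variable $x$ for the pair $(u_I, v_I)$ — note that the fields $e_2, e_3$ involve the factors $x$ and $\tfrac12 x^2$, so the coefficients of these ODEs are polynomial in $x$. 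The crucial point is that $f$ is genuinely a function on the compact manifold, so each $u_I(x)$ must itself satisfy the additional quasi-periodicity coming from the $\alpha$-translations in the lattice (the ones shifting $z\mapsto z+2\alpha y$, $t\mapsto t+2\alpha^2 y+2\alpha z$); equivalently, one can argue directly that any solution of the ODE system that extends to a bounded/periodic function on $\IR$ must vanish unless forced into a contradiction with the $m\geq 1$ terms.

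The key algebraic step is to show that the only solution compatible with all the periodicity constraints is $f\equiv 0$. Concretely: from the two complex equations, eliminate derivatives to get that $f$ satisfies a second-order elliptic equation of the form $(\bar{\cX}_1\bar{\cX}_2 \text{-type operator})f = (\text{zeroth order})f$ with the zeroth-order coefficient proportional to $m^2 \neq 0$; the sign of that term obstructs any nonzero periodic solution, exactly as in the elliptic-equation arguments used for $M(k)$ in the proof of Proposition~\ref{prop: kod dim m(k) 1}. Alternatively, and perhaps more cleanly, integrate $|f|^2$ over $\cN$: from $\bar{\cX}_1(f) = \tfrac{m\ii}{4}f$ and $\bar{\cX}_2(f) = \tfrac{m}{4}f$ one computes $\cX_1\bar{\cX}_1|f|^2$-type quantities and finds, after using the commutators $[e_1,e_2]=e_3$, $[e_1,e_3]=e_4$ and integrating by parts on the compact manifold, that $\int_\cN (\text{positive multiple of }m^2)|f|^2 = 0$, forcing $f = 0$. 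Either way $P_m(\cN, J) = 0$ for all $m\geq 1$, hence $\kod(\cN, J) = -\infty$.

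The main obstacle I anticipate is bookkeeping rather than conceptual: keeping the polynomial-in-$x$ coefficients of the ODE system straight, and making the periodicity argument airtight — one must be careful that "periodic in $y,z,t$" plus the ODE in $x$ genuinely rules out nonzero solutions, since a naive Fourier-in-$x$ expansion is not available ($f$ need not be periodic in $x$). The integration-by-parts over the compact $\cN$ sidesteps this subtlety entirely and is probably the safest route to write up.
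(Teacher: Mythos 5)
Your setup coincides with the paper's: the pseudoholomorphicity condition $\bar{\cX}_1(f)=\tfrac{m\ii}{4}f$, $\bar{\cX}_2(f)=\tfrac m4 f$, the splitting $f=u+\ii v$ into four real equations, and the Fourier expansion \eqref{eq: Fourier N} in $y,z,t$ with coefficients depending on $x$. But the decisive step --- showing the only solution is $f\equiv 0$ --- is exactly what you leave open, and both of your proposed completions are unsubstantiated. The key observation you miss is that the two equations coming from $\bar{\cX}_2=\tfrac12(e_2+\ii e_3)$ contain \emph{no} $x$-derivative at all (neither $e_2$ nor $e_3$ involves $\partial/\partial x$), so after passing to Fourier modes they are not ODEs but a purely algebraic homogeneous $2\times 2$ system in $(u_I(x),v_I(x))$ for each $I=(a,b,c)$ and each $x$. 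The paper computes its determinant, $-\pi^2\bigl(a+bx+\tfrac12 cx^2+\tfrac{\ii}{2\pi}m+\ii(b+cx)\bigr)\bigl(a+bx+\tfrac12 cx^2+\tfrac{\ii}{2\pi}m-\ii(b+cx)\bigr)$, and shows it never vanishes, using $m\geq 1$ and the irrationality of $\pi$; hence $u_I=v_I=0$ outright, with no need for quasi-periodicity in $x$, boundedness, or any global argument. Your description of the mode-by-mode system as ``first-order linear ODEs in $x$'' is what sends you looking for the global arguments that then cause trouble.

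Concerning those fallbacks: neither works as sketched. The genuinely elliptic combination would be $\cX_1\bar{\cX}_1+\cX_2\bar{\cX}_2$, but applying it requires $\cX_1 f$ and $\cX_2 f$, which the two equations do not control; the combinations you can form from $\bar{\cX}_1,\bar{\cX}_2$ alone (e.g.\ $\bar{\cX}_1^2f=-\tfrac{m^2}{16}f$, $\bar{\cX}_2^2f=\tfrac{m^2}{16}f$) are non-elliptic and carry no usable sign, so there is no ``elliptic equation with zeroth-order term $\propto m^2$'' available. Likewise the integration by parts does not close: from $(e_2+\ii e_3)f=\tfrac m2 f$ one gets $e_2|f|^2-2\Im(\bar f\,e_3 f)=m|f|^2$, and integrating (the $e_i$ are divergence-free) only yields $-2\int_{\cN}\Im(\bar f\,e_3 f)=m\int_{\cN}|f|^2$, where the left-hand term is a signed ``momentum'' quantity that is not a priori controlled; the other equation gives only $\int_{\cN}\Im(\bar f\,e_4 f)=0$. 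More fundamentally, the paper's vanishing invokes the irrationality of $\pi$ relative to the integer frequencies and $m$ --- the same arithmetic sensitivity seen in Chen--Zhang's Kodaira--Thurston example, where plurigenera do jump depending on such rationality --- so one should be suspicious of any soft Bochner-type argument that never sees the lattice. As it stands, your proposal has the correct reduction but a genuine gap at its core.
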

\begin{proof}
By a direct computation, a smooth pluricanonical section $f (\varphi^1 \wedge \varphi^2)^{\tensor m}$ is pseudoholomorphic if and only if $\delbar f - \frac{1}{4} mf (\ii \bar{\varphi}^1 + \bar{\varphi}^2) = 0$, which is equivalent to the system
\[\left\{ \begin{array}{l}
\bar{\cX}_1(f) - \frac{1}{4} \ii mf = 0\\
\bar{\cX}_2(f) - \frac{1}{4} mf = 0.
\end{array}\right.\]
Writing $f = u + \ii v$ and using \eqref{eq: (1,0) fields} and \eqref{eq: tangent fields N} we find that the previous system is equivalent to
\[\left\{ \begin{array}{l}
\frac{\partial u}{\partial x} - \frac{\partial v}{\partial t} + \frac{1}{2}mv = 0\\
\frac{\partial v}{\partial x} + \frac{\partial u}{\partial t} - \frac{1}{2}mu = 0\\
\frac{\partial u}{\partial y} + x \frac{\partial u}{\partial z} + \frac{1}{2} x^2 \frac{\partial u}{\partial t} - \frac{\partial v}{\partial z} - x \frac{\partial v}{\partial t} - \frac{1}{2}mu = 0\\
\frac{\partial v}{\partial y} + x \frac{\partial v}{\partial z} + \frac{1}{2} x^2 \frac{\partial v}{\partial t} + \frac{\partial u}{\partial z} + x \frac{\partial u}{\partial t} - \frac{1}{2}mv = 0.
\end{array} \right.\]
Using the Fourier expansion \eqref{eq: Fourier N}, the last two equations in the last system become
\[\left\{ \begin{array}{l}
\ii \pi a u_I + \ii \pi b x u_I + \ii \pi c \frac{1}{2} x^2 u_I - \ii \pi b v_I - \ii \pi c x v_I - \frac{1}{2} m u_I = 0\\
\ii \pi a v_I + \ii \pi b x v_I + \ii \pi c \frac{1}{2} x^2 v_I + \ii \pi b u_I + \ii \pi c x u_I - \frac{1}{2} m v_I = 0,
\end{array} \right.\]
which is a homogeneous linear system for the pair $(u_I, v_I)$. The determinant of the matrix representing this last system is
\[-\pi^2 \pa{a + bx + \frac{1}{2}x^2 c + \frac{\ii}{2\pi}m + \ii (b + cx)} \pa{a + bx + \frac{1}{2}x^2 c + \frac{\ii}{2\pi}m - \ii (b + cx)}.\]
This determinant vanishes if and only if
\[\left\{ \begin{array}{l}
a + bx + \frac{1}{2}x^2 c = 0\\
\frac{m}{2\pi} + b + cx = 0
\end{array}\right. \qquad \text{or} \qquad \left\{ \begin{array}{l}
a + bx + \frac{1}{2}x^2 c = 0\\
\frac{m}{2\pi} - b - cx = 0,
\end{array}\right.\]
and we want to determine explicitly when this happens.
\begin{enumerate}
\item If $c = 0$, both systems are impossible since $\pi$ is irrational.
\item If $c \neq 0$, we can solve the second equation finding that $x = \frac{m}{2\pi c} - \frac{b}{c}$ (we focus on the second system, but the situation for the first one is completely analogous). We can then substitute this expression in the first equation, and after we clean the denominators we find the equation
\[4(2ca - b^2) \pi + m^2 = 0.\]
We have then two further subcases:
\begin{enumerate}
\item if $b^2 = 2ca$, this equation is never satisfied, since $m \neq 0$;
\item if $b^2 \neq 2ca$, this equation is never satisfied, since $\pi$ is irrational.
\end{enumerate}
\end{enumerate}
In any case, the determinant is non-zero for every value of $I = (a, b, c)$ and so $u_I$ and $v_I$ must always be identically zero. Obviously $u_I = v_I = 0$ is a solution of the system. As a consequence $u = v = 0$, which implies that the Kodaira dimension of $(\cN, J)$ is $-\infty$.
\end{proof}

\begin{rem}
Observe that with respect to the symplectic form $\xi$ we have that
\[\xi(J \blank, J \blank) = \xi(\blank, \blank),\]
and that $\xi = \frac{1}{2} \ii \pa{\varphi^1 \wedge \bar{\varphi}^1 + \varphi^2 \wedge \bar{\varphi}^2}$ is of pure type $(1, 1)$. Moreover, $\xi$ is the $(1, 1)$-form naturally associated with the metric
\[g' = e^1 \tensor e^1 + e^2 \tensor e^2 + e^3 \tensor e^3 + e^4 \tensor e^4\]
on $\cN$, and so $(\cN, J, g')$ is an almost K\"ahler manifold.
\end{rem}

We consider now a second almost complex structure on $\cN$: the one defined by
\[J' e_1 = e_2, \qquad J' e_2 = -e_1, \qquad J' e_3 = e_4, \qquad J' e_4 = -e_3.\]
In this case we have
\[\varphi^1 = e^1 + \ii e^2, \qquad \varphi^2 = e^3 + \ii e^4,\]
and so
\[\begin{array}{l}
d\varphi^1 = 0,\\
d\varphi^2 = -\frac{1}{4} \ii \pa{\varphi^1 \wedge \varphi^2 + 2 \varphi^1 \wedge \bar{\varphi}^1 + \varphi^1 \wedge \bar{\varphi}^2 - \varphi^2 \wedge \bar{\varphi}^1 + \bar{\varphi}^1 \wedge \bar{\varphi}^2}.
\end{array}\]
As a consequence
\[\delbar(\varphi^1 \wedge \varphi^2) = -\frac{1}{4} \ii \bar{\varphi}^1 \wedge \varphi^1 \wedge \varphi^2,\]
so for any smooth function $f: \cN \longrightarrow \IC$ we have
\begin{equation}\label{eq: pseudohol condition}
\begin{array}{rl}
\delbar \pa{f \cdot (\varphi^1 \wedge \varphi^2)^{\tensor m}} = & \delbar f \tensor (\varphi^1 \wedge \varphi^2)^{\tensor m} - \frac{1}{4} \ii mf \bar{\varphi}^1 \tensor (\varphi^1 \wedge \varphi^2)^{\tensor m} =\\
= & \pa{\delbar f - \frac{1}{4} \ii mf \bar{\varphi}^1} \tensor (\varphi^1 \wedge \varphi^2)^{\tensor m}.
\end{array}
\end{equation}

\begin{prop}\label{prop: kod dim n 2}
The Kodaira dimension of $(\cN, J')$ is
\[\kod(\cN, J') = -\infty.\]
\end{prop}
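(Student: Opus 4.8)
The plan is to mimic the proof of Proposition \ref{prop: kod dim n 1} very closely, since the geometric setup is identical (same nilmanifold $\cN$, same Fourier-analytic framework) and only the almost complex structure $J'$ has changed. First I would write a smooth pluricanonical section as $f \cdot (\varphi^1 \wedge \varphi^2)^{\tensor m}$ and use \eqref{eq: pseudohol condition} to see that it is pseudoholomorphic if and only if $\delbar f - \frac{1}{4} \ii m f \bar{\varphi}^1 = 0$. Decomposing $\delbar f$ in the coframe $\{\bar{\varphi}^1, \bar{\varphi}^2\}$ dual to $\{\bar{\cX}_1, \bar{\cX}_2\}$, this is equivalent to the system $\bar{\cX}_1(f) - \frac{1}{4} \ii m f = 0$ and $\bar{\cX}_2(f) = 0$, where now $\cX_1 = \frac12(e_1 - \ii e_2)$ and $\cX_2 = \frac12(e_3 - \ii e_4)$ in terms of the tangent fields \eqref{eq: tangent fields N}.

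Next I would write $f = u + \ii v$ with $u, v$ real functions on $\IR^4$ invariant under the lattice, and expand the two complex equations into a real first-order PDE system in $u$ and $v$, substituting the explicit expressions \eqref{eq: tangent fields N} for $e_1, \dots, e_4$. Then I would insert the Fourier expansion \eqref{eq: Fourier N}, $f = \sum_I f_I(x) e^{\ii \pi(ay + bz + ct)}$ (equivalently for $u$ and $v$), into the equation coming from $\bar{\cX}_2(f) = 0$: since $\cX_2 = \frac12(e_3 - \ii e_4) = \frac12\big(\frac{\partial}{\partial z} + x\frac{\partial}{\partial t} - \ii \frac{\partial}{\partial t}\big)$, the $\partial/\partial x$ variable does not appear, so for each multi-index $I = (a, b, c)$ this yields a homogeneous $2 \times 2$ linear system for the pair $(u_I(x), v_I(x))$ whose coefficients are polynomials in $x$. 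Its determinant will be (up to constants) a product of two linear-in-$(b + cx)$ factors plus an imaginary shift involving $m$, entirely analogous to the determinant appearing in the proof of Proposition \ref{prop: kod dim n 1}, though slightly simpler because the $\frac12 x^2$ term is absent here.

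The core of the argument is then the case analysis showing this determinant never vanishes for $m \geq 1$. I expect the condition to read something like: the determinant vanishes only if $b + cx = 0$ together with a second equation forcing $\frac{m}{2\pi} = 0$ (or an analogous relation obstructed by the irrationality of $\pi$), so that it is impossible for $m \neq 0$; if $c \neq 0$ one solves for $x$ and derives a contradiction, and if $c = 0$ the relation $\frac{m}{2\pi} + b = 0$ is impossible since $\pi \notin \Q$ and $b \in \IZ$. Hence $u_I(x) \equiv 0$ and $v_I(x) \equiv 0$ for every $I$, so $f \equiv 0$, giving $P_m(\cN, J') = 0$ for all $m \geq 1$ and therefore $\kod(\cN, J') = -\infty$. (One should double-check whether the first equation $\bar{\cX}_1(f) - \frac14 \ii m f = 0$ needs to be invoked at all, or whether — as in Proposition \ref{prop: kod dim n 1} — the second equation alone already kills all Fourier modes; I anticipate the latter, but if not, the same Fourier/determinant technique applied to the first equation will finish the job.) The main obstacle is purely bookkeeping: correctly expanding the $\bar{\cX}_i$ in the $(x,y,z,t)$ coordinates and then carrying out the determinant computation and number-theoretic case split without error.
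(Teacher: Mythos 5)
Your setup coincides with the paper's: the pseudoholomorphicity condition $\delbar f - \frac{1}{4}\ii m f\,\bar{\varphi}^1 = 0$, its splitting into $\bar{\cX}_1(f) - \frac{1}{4}\ii m f = 0$ and $\bar{\cX}_2(f) = 0$ with $\cX_1 = \frac12(e_1 - \ii e_2)$, $\cX_2 = \frac12(e_3 - \ii e_4)$, the passage to $f = u + \ii v$ and the Fourier expansion \eqref{eq: Fourier N}. But the step you designate as the core of the argument is mispredicted in a way that matters. Since $\delbar(\varphi^1\wedge\varphi^2)$ is proportional to $\bar{\varphi}^1$, the plurigenus correction term $\frac14\ii m f$ sits entirely in the $\bar{\cX}_1$-equation; the $\bar{\cX}_2$-equation contains no $m$ at all. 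Feeding \eqref{eq: Fourier N} into $\bar{\cX}_2(f)=0$ gives, for $I=(a,b,c)$, the algebraic system $(b+cx)u_I - c v_I = 0$, $(b+cx)v_I + c u_I = 0$, whose determinant is $(b+cx)^2 + c^2$: a sum of real squares, with no imaginary shift involving $m$ and no role for the irrationality of $\pi$. It vanishes precisely when $b=c=0$, so this equation only forces $u,v$ to be independent of $z$ and $t$; every mode with $b=c=0$ (in particular the constants) survives, and at this stage the vanishing of $P_m$ is not yet decided.

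Your fallback, ``apply the same Fourier/determinant technique to the first equation,'' is where the real remaining work lies, and as stated it does not go through directly: in the expansion \eqref{eq: Fourier N} the variable $x$ is not a Fourier variable, and since $e_1 = \partial/\partial x$ the first equation produces a first-order ODE system in $x$ for $(u_I, v_I)$, not an algebraic one. The paper's resolution requires two further ideas you would need to supply. First, once $u,v$ depend only on $(x,y)$, the lattice invariance $f(x,y,z,t)=f(x+2\alpha, y+2\beta, z+2\alpha y+2\gamma, t+2\alpha^2 y+2\alpha z+2\delta)$ degenerates to genuine $2$-periodicity in $x$ and $y$, so $u,v$ admit a new double Fourier expansion $u = \sum_{(\lambda,\mu)\in\IZ^2} u_{\lambda\mu} e^{\ii\pi(\lambda x+\mu y)}$. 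Second, the pair of real equations coming from $\bar{\cX}_1(f) = \frac14\ii m f$ is combined into the second-order operator $\frac{\partial^2}{\partial x^2} + \frac{\partial^2}{\partial y^2} - m\frac{\partial}{\partial y} + \frac14 m^2$ annihilating $u$ and $v$, whose symbol $-\pi^2\lambda^2 - \pi^2\mu^2 + \frac14 m^2 - \ii\pi m\mu$ is nonzero for every $(\lambda,\mu)\in\IZ^2$: the imaginary part handles $\mu\neq 0$, and for $\mu=0$ one uses $m\geq 1$ together with the irrationality of $\pi$. Only then does one get $u=v=0$, hence $P_m(\cN,J')=0$ for all $m\geq 1$ and $\kod(\cN,J')=-\infty$. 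So the overall strategy is the right one, but the proposal as written leaves a genuine gap between ``the $\bar{\cX}_2$-equation kills the $(z,t)$-dependence'' and the conclusion $f\equiv 0$.
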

\begin{proof}
The dual frame of vector fields of type $(1, 0)$ on $(\cN, J')$ is
\[\cX_1 = \frac{1}{2} (e_1 - \ii e_2), \qquad \cX_2 = \frac{1}{2} (e_3 - \ii e_4),\]
so after we write $f = u + \ii v$ we see from \eqref{eq: pseudohol condition} that the smooth pluricanonical section $f \cdot (\varphi^1 \wedge \varphi^2)^{\tensor m}$ is pseudoholomorphic if and only if
\[\left\{ \begin{array}{l}
e_1(u) - e_2(v) + \frac{1}{2} mv = 0\\
e_1(v) + e_2(u) - \frac{1}{2} mu = 0\\
e_3(u) - e_4(v) = 0\\
e_3(v) + e_4(u) = 0.
\end{array} \right.\]
By \eqref{eq: tangent fields N} this system becomes
\begin{equation}\label{eq: system N}
\left\{ \begin{array}{l}
\frac{\partial u}{\partial x} - \frac{\partial v}{\partial y} - x \frac{\partial v}{\partial z} - \frac{1}{2} x^2 \frac{\partial v}{\partial t} + \frac{1}{2} mv = 0\\
\frac{\partial v}{\partial x} + \frac{\partial u}{\partial y} + x \frac{\partial u}{\partial z} + \frac{1}{2} x^2 \frac{\partial u}{\partial t} - \frac{1}{2} mu = 0\\
\frac{\partial u}{\partial z} + x \frac{\partial u}{\partial t} - \frac{\partial v}{\partial t} = 0\\
\frac{\partial v}{\partial z} + x \frac{\partial v}{\partial t} + \frac{\partial u}{\partial t} = 0.
\end{array} \right.
\end{equation}
Recall from \eqref{eq: Fourier N} that $u$ and $v$ can be expressed as Fourier series: in particular, the third and fourth equation of \eqref{eq: system N} give us the system for the Fourier coefficients $u_I$ and $v_I$ 
\[\left\{ \begin{array}{l}
(b + cx) u_I - c v_I = 0\\
(b + cx) v_I + c u_I = 0
\end{array} \right. \qquad \text{i.e.} \qquad \left( \begin{array}{cc}
b + cx & -c\\
c & b + cx
\end{array} \right) \left( \begin{array}{c}
u_I\\
v_I
\end{array} \right) = 0.\]
As the determinant of the above matrix is $(b + cx)^2 + c^2$, which is the sum of two \emph{real} squares, we argue that it vanishes if and only if $b = c = 0$. As a consequence
\[b \neq 0 \text{ or } c \neq 0 \quad\Longrightarrow\quad u_I = v_I = 0\]
and so $u$ and $v$ do not depend on $z$ and $t$. It is then easy to check that they admit a `new' Fourier series expansion as
\[u(x, y) = \sum_{(\lambda, \mu) \in \IZ^2} u_{\lambda\mu} e^{\ii \pi (\lambda x + \mu y)}, \qquad u_{\lambda\mu} \in \IC\]
and similarly for $v$. Moreover, the first and second equations of \eqref{eq: system N} simplify to
\[\left\{ \begin{array}{l}
\frac{\partial u}{\partial x} = \frac{\partial v}{\partial y} - \frac{1}{2} mv\\
\frac{\partial u}{\partial y} = - \frac{\partial v}{\partial x} + \frac{1}{2} mu
\end{array} \right.\]
and so
\[\frac{\partial^2 v}{\partial y^2} - \frac{1}{2}m \frac{\partial v}{\partial y} = \frac{\partial u}{\partial y \partial x} = \frac{\partial u}{\partial x \partial y} = - \frac{\partial^2 v}{\partial x^2} + \frac{1}{2} m \pa{\frac{\partial v}{\partial y} - \frac{1}{2} mv}.\]
So, both $u$ and $v$ are solution of the second order operator
\[\frac{\partial^2}{\partial x^2} + \frac{\partial^2}{\partial y^2} - m \frac{\partial}{\partial y} + \frac{1}{4}m^2,\]
which implies that the Fourier coefficients satisfy
\[\pa{-\pi^2 \lambda^2 - \pi^2 \mu^2 + \frac{1}{4}m^2 - \ii \pi m \mu} u_{\lambda\mu} = 0.\]
Observe that if $\mu \neq 0$ then the coefficient in the above equation has non-vanishing imaginary part, which forces $u_{\lambda\mu} = 0$. Assume finally that $\mu = 0$: the previous relation simplifies to $\pa{-\pi^2 \lambda^2 + \frac{1}{4}m^2} u_{\lambda 0} = 0$, which is still satisfied only if $u_{\lambda 0} = 0$ by the irrationality of $\pi$. This means that $u = 0$ and analogously for $v$.

To sum up, we have shown that if $f = u + \ii v$ is such that $f \cdot (\varphi^1 \wedge \varphi^2)^{\tensor m}$ is a pseudoholomorphic pluricanonical section then $f = 0$, which means that
\[P_m(\cN, J') = 0 \qquad \forall m \geq 1\]
and so
\[\kod(\cN, J') = -\infty.\]
\end{proof}

\begin{rem}
With respect to this second almost complex structure we have
\[\xi = -\frac{1}{2} \ii \pa{\varphi^1 \wedge \varphi^2 - \bar{\varphi}^1 \wedge \bar{\varphi}^2}\]
and so $\xi(J' \blank, J' \blank) = -\xi(\blank, \blank)$.
\end{rem}

Finally, consider the following family of almost complex structures on $\cN$:
\[J_ae_1 = \frac{1}{a} e_3, \qquad J_a e_2 = e_4, \qquad J_a e_3 = -ae_1, \qquad J_a e_4 = -e_2.\]
where $a\in \mathbb R \setminus \{0\}.$\\
In this case we have the $(1, 0)$-fields
\[\cX_{a1} = \frac{1}{2} \pa{e_1 - \frac{\ii}{a} e_3}, \qquad \cX_{a2} = \frac{1}{2} (e_2 - \ii e_4)\]
with dual $(1, 0)$-forms
\[\varphi_a^1 = e^1 + \ii ae^3, \qquad \varphi_a^2 = e^2 + \ii e^4.\]
In this case, the canonical section $\varphi_a^1 \wedge \varphi_a^2$ is not pseudoholomorphic as
\[\delbar({\varphi_a^1} \wedge \varphi_a^2) = \frac{a}{4} \ii {\varphi_a^1} \wedge \varphi_a^2 \wedge \bar{\varphi}_a^2.\]

We compute the Kodaira dimension of $(\cN, J_a)$ in the next proposition.

\begin{prop}\label{prop: kod dim n 3}
We have
\[\kod(\cN, J_a) = \left\{ \begin{array}{ll}
-\infty & \text{if } a \notin 2 \pi \IQ,\\
0 & \text{if } a \in 2 \pi \IQ \smallsetminus \{ 0 \}.
\end{array} \right.\]
\end{prop}
\begin{proof}
It is easy to see that a pluricanonical form $f (\varphi_a^1 \wedge \varphi_a^2)^{\tensor k}$ is pseudoholomorphic if and only if
\[\left\{ \begin{array}{l}
\bar{\cX}_{a1}(f) = 0\\
\bar{\cX}_{a2}(f) + \frac{1}{4} \ii a k f = 0.
\end{array} \right.\]
Writing $f = u + \ii v$ and using \eqref{eq: tangent fields N}, the previous systems is equivalent to
\[\left\{ \begin{array}{l}
\frac{\partial u}{\partial x} - \frac{1}{a}(\frac{\partial v}{\partial z} + x \frac{\partial v}{\partial t}) = 0\\
\frac{\partial v}{\partial x} + \frac{1}{a}(\frac{\partial u}{\partial z} + x \frac{\partial u}{\partial t}) = 0\\
2 \frac{\partial u}{\partial y} + 2x \frac{\partial u}{\partial z} + x^2 \frac{\partial u}{\partial t} - 2 \frac{\partial v}{\partial t} - kav = 0\\
2 \frac{\partial v}{\partial y} + 2x \frac{\partial v}{\partial z} + x^2 \frac{\partial v}{\partial t} + 2 \frac{\partial u}{\partial t} + kau = 0.
\end{array} \right.\]

By \eqref{eq: Fourier N} the last differential system becomes then a differential system for the Fourier coefficients $u_N(x)$ and $v_N(x)$: denoting by $'$ the derivative with respect to $x$ we have
\[\left\{ \begin{array}{l}
u_N' - \frac{1}{a}\ii \pi m v_N - \frac{1}{a}\ii \pi p x v_N = 0\\
v_N' + \frac{1}{a}\ii \pi m u_N + \frac{1}{a}\ii \pi p x u_N = 0\\
2\pi \ii n u_N + 2\pi \ii mx u_N + \pi \ii p x^2 u_N - 2\pi \ii p v_N - ka v_N = 0\\
2\pi \ii n v_N + 2\pi \ii mx v_N + \pi \ii p x^2 v_N + 2\pi \ii p u_N + ka u_N = 0.
\end{array} \right.\]
The last two equations involve only $u_N$ and $v_N$
\[\left( \begin{array}{cc}
\ii \pi (2n + 2mx + px^2) & -(ka + 2\pi \ii p)\\
ka + 2\pi \ii p & \ii \pi (2n + 2mx + px^2)
\end{array} \right) \left( \begin{array}{c}
u_N\\
v_N
\end{array} \right) = \left( \begin{array}{c}
0\\
0
\end{array} \right),\]
and so we can deduce what follows.
\begin{enumerate}
\item If $p \neq 0$, then $u_N = v_N = 0$. In fact, the determinant of the previous matrix is not zero, as its imaginary part is $4\pi p \neq 0$.
\item If $p = 0$ and $m \neq 0$, then $u_N = v_N = 0$. In this case we see that the last system becomes
\[\left\{ \begin{array}{l}
v_N = \frac{2\pi}{ka} \ii (n + mx) u_N\\
(k^2a^2 - 4\pi^2(n + mx)^2) u_N = 0.
\end{array} \right.\]
From the second equation we deduce that $u_N$ must be zero, except possibly for $x = -\frac{2\pi n \pm ka}{2\pi m}$ where the first function vanishes. As $u_N$ is continuous, we deduce that $u_N = v_N = 0$.
\item If $m = p = 0$, then the determinant of the previous matrix equals $k^2a^2 - 4\pi^2 n^2$. This is non zero for $a \notin 2\pi \mathbb Q \setminus \{0\}$, so in this case we have that $u_N = v_N = 0$.
\end{enumerate}

This shows that if $a \notin 2\pi \IQ$ then $u_N = v_N = 0$ for every $N \in \IZ^3$ and so $\kod(\cN, J_a) = 0$.

Assume now that $a \in 2\pi\IQ \smallsetminus \{ 0 \}$, hence that $a = 2\pi \frac{n'}{k'}$ for some $n', k' \in \IZ$ with $ k' > 0$ and $\gcd(n', k') = 1$. What we did in the previous section allow us to conclude the following:
\begin{enumerate}
\item if $k$ is not a multiple of $k'$, then $u_N = v_N = 0$ for every $N \in \IZ^3$ and so $P_k(\cN, J_a) = 0$;
\item if $k$ is a multiple of $k'$, say $k = qk'$ for a suitable positive integer $q$, then $u_N = v_N = 0$ for every $N \in \IZ^3$, possibly with the exception of $N = (n, m, p) = (\pm qn', 0, 0)$.
\end{enumerate}
Hence we focus on this last case. For $k = q k'$ and $N = (n, m, p) = (qn', 0, 0)$ we have the system
\[\left\{ \begin{array}{l}
u'_N = 0\\
v'_N = 0\\
\ii u_N - v_N = 0\\
\ii v_N + u_N = 0
\end{array} \right. \longrightarrow \left\{ \begin{array}{l}
u_N(x) \equiv u_N \in \IC\\
v_N(x) \equiv v_N \in \IC\\
v_N = \ii u_N.
\end{array} \right.\]
The analogous system for $N = (-qn', 0, 0)$ has $v_N = -\ii u_N$ as solution, where $u_N(x) \equiv u_N$ and $v_N(x) \equiv v_N$ are again constant.

Since $u$ and $v$ are real valued functions, we have that $u_{-N} = \bar{u}_N$ and $v_{-N} = \bar{v}_N$, so
\[\begin{array}{rl}
u(x, y, z, t) = & u_{(-qn', 0, 0)} e^{-\ii \pi q n' y} + u_{(qn', 0, 0)} e^{\ii \pi q n' y}\\
= & \bar{u}_{(qn', 0, 0)} e^{-\ii \pi q n' y} + u_{(qn', 0, 0)} e^{\ii \pi q n' y}\\
= & 2 \Re \pa{u_{(qn', 0, 0)} e^{\ii \pi q n' y}},\\
v(x, y, z, t) = & -\ii u_{(-qn', 0, 0)} e^{-\ii \pi q n' y} + \ii u_{(qn', 0, 0)} e^{\ii \pi q n' y}\\
= & \ii \pa{u_{(qn', 0, 0)} e^{\ii \pi q n' y} - \bar{u}_{(qn', 0, 0)} e^{-\ii \pi q n' y}}\\
= & -2 \Im \pa{u_{(qn', 0, 0)} e^{\ii \pi q n' y}}.
\end{array}\]
As a consequence we can deduce that
\[f(x, y, z, t) = 2 \bar{u}_{(qn', 0, 0)} e^{-\ii \pi q n' y},\]
and it is now easy to verify that such a function satisfies the periodicity prescribed by \eqref{eq: periodicity N} and so descends from $\IR^4$ to $\cN$.

To sum up, we have shown that for the almost complex manifold $X_a = (\cN, J_a)$, if $a = 2 \pi \frac{n'}{k'}$ for coprime integers $n', k'$ with $k' > 0$ then for every positive $q \in \IZ$
\[H^0(X_a, \omega_{X_a}^{\tensor qk'}) = \IC \cdot e^{-\ii \pi q n' y}.\]
Hence $P_{qk'}(\cN, J_a) = 1$ and so
\[\kod(\cN, J_a) = 0.\]
\end{proof}

\subsection{Curvature of the canonical connection}
Let $J$ be the almost complex structure on $\cN$ defined by \eqref{almost-kaehler-nil}. Then the action of $J$ on the the dual coframe $\{e^1,\ldots,e^4\}$ of $\{e_1,\ldots,e_4\}$ is given by 
$$
Je^1=-e^4,\qquad Je^2=-e^3,\qquad Je^3=e^2,\qquad Je^4=e^1.
$$
Then, setting
$$
\omega=e^{1}\wedge e^4+e^2\wedge e^3,\qquad g_J(\cdot,\cdot)=\omega(\cdot,J\cdot),
$$
the pair $(J,g_J)$ gives rise to an almost Hermitian metric on $\cN$. A straightforward computation yields to the following

\begin{prop}\label{scalar-flat}
On the nilmanifold $\cN$ consider the almost Hermitian metric $(J, g_J)$. Then $(J, g_J)$ is a non Chern--Ricci flat almost K\"ahler metric with vanishing scalar curvature on $\cN$.
\end{prop}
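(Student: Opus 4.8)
The plan is to run exactly the same connection-form computation that was carried out for the family $(M(k), J_s, g_s)$, but now for the fixed almost Kähler structure $(\cN, J, g_J)$. First I would record that, with respect to the unitary coframe $\varphi^1 = e^1 + \ii e^4$, $\varphi^2 = e^2 + \ii e^3$, the two-form $\omega = e^1 \wedge e^4 + e^2 \wedge e^3 = \frac{\ii}{2}(\varphi^1 \wedge \bar\varphi^1 + \varphi^2 \wedge \bar\varphi^2)$ is of type $(1,1)$ and closed (this is the remark already in the text), so that $(J, g_J)$ is genuinely almost Kähler; in particular $g_J = g'$. Next I would write down $d\varphi^1$ and $d\varphi^2$ — these are already displayed in the excerpt just before Proposition \ref{prop: kod dim n 1} — and from them extract the complex torsion forms $\Theta^1, \Theta^2$ of the canonical connection via \cite[Corollary 3.8]{Cattaneo-Nannicini-Tomassini}, exactly as was done for $M(k)$. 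Since $J$ is not integrable (the $d\varphi^i$ have nonzero $\bar\varphi \wedge \bar\varphi$ components), the torsion is nonzero.

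Then I would solve the first structure equations
\[
\left\{
\begin{array}{l}
d\varphi^i + \vartheta^i_j \wedge \varphi^j = \Theta^i,\\
\vartheta^i_j + \overline{\vartheta^j_i} = 0
\end{array}
\right.
\]
for the connection one-forms $\vartheta^i_j$ (a linear system with a unique solution, as in the $M(k)$ case), and feed these into the second structure equations $\psi^i_j = d\vartheta^i_j + \vartheta^i_k \wedge \vartheta^k_j$ to obtain the curvature two-forms. From the $\varphi^k \wedge \bar\varphi^l$ coefficients $R^i_{jk\bar l}$ I would assemble the Chern–Ricci components $R_{k\bar l} = \sum_i R^i_{ik\bar l}$ and then the scalar curvature $\operatorname{scal} = \sum_{k,l} h^{k\bar l} R_{k\bar l}$; since the frame is unitary this is just $R_{1\bar 1} + R_{2\bar 2}$. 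The claim to verify is that this sum vanishes while the individual $R_{k\bar l}$ (equivalently the Chern–Ricci form $\rho = \ii R_{k\bar l}\, \varphi^k \wedge \bar\varphi^l$) is not identically zero — this is what distinguishes the present example from Proposition \ref{Chern-flat}.

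The only real obstacle here is bookkeeping: the $d\varphi^i$ have five terms each, so $\vartheta^i_j$ and then $\psi^i_j$ are somewhat lengthy, and one must be careful with the conjugation relations and with the fact that the structure constants are genuine constants (so $d$ of a constant-coefficient form is computed purely from the $d\varphi^i, d\bar\varphi^i$). Because everything is constant-coefficient on the nilmanifold, no Fourier analysis or PDE is needed — the entire computation reduces to exterior algebra on a four-dimensional vector space, and the vanishing of $R_{1\bar1}+R_{2\bar2}$ together with the non-vanishing of $\rho$ falls out of the explicit expressions. Finally, I would remark that the statement is consistent with Proposition \ref{prop: kod dim n 1}: a nonzero Chern–Ricci form with vanishing scalar curvature cannot be (a positive multiple of) a Kähler form, which is compatible with $\kod(\cN, J) = -\infty$.
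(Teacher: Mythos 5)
Your plan is precisely the paper's proof: compute the Nijenhuis tensor and hence the torsion of the canonical connection via \cite[Corollary 3.8]{Cattaneo-Nannicini-Tomassini}, solve the first structure equations for the connection forms $\vartheta^i_j$, obtain the curvature forms $\psi^i_j$ from the second structure equations, and read off that the Chern--Ricci form (which the paper computes to be $\frac{1}{4}\ii\,\Phi^1\wedge\bar{\Phi}^2-\frac{1}{4}\ii\,\Phi^2\wedge\bar{\Phi}^1$) is nonzero while its trace, i.e.\ the scalar curvature, vanishes — a constant-coefficient exterior-algebra computation, exactly as you say. The only slip is that the unitary coframe for the metric induced by $(J,g_J)$ is $\Phi^i=\frac{\sqrt{2}}{2}\varphi^i$ rather than $\varphi^i$ itself (since $g_J(\cX_i,\bar{\cX}_i)=\frac{1}{2}$), but this rescaling only multiplies $h^{k\bar{l}}$ by a constant and affects neither the vanishing of the scalar curvature nor the non-vanishing of the Chern--Ricci tensor.
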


\begin{proof}
The coframe given by $\Phi^1 = \frac{\sqrt{2}}{2} \varphi^1$ and $\Phi^2 = \frac{\sqrt{2}}{2} \varphi^2$ is unitary with respect to the Hermitian metric induced by $(J, g_J)$.

One can then compute the Nijenhuis tensor of $J$ and use it to compute the real torsion forms of the canonical connection (see \cite[Corollary 3.8]{Cattaneo-Nannicini-Tomassini}). Hence the complex torsion forms are
\[\Theta^1 = \frac{\sqrt{2}}{4} \bar{\Phi}^1 \wedge \bar{\Phi}^2, \qquad \text{and} \qquad \Theta^2 = -\frac{\sqrt{2}}{4} \ii \bar{\Phi}^1 \wedge \bar{\Phi}^2.\]
Solving the primary structure equations to find the connection forms $\vartheta^i_j$ we see that these are given by
\[\begin{array}{ll}
\vartheta^1_1 = -\frac{\sqrt{2}}{4} \Phi^2 + \frac{\sqrt{2}}{4} \bar{\Phi}^2, & \vartheta^1_2 = -\frac{\sqrt{2}}{4} \ii \Phi^2 + \frac{\sqrt{2}}{4} \bar{\Phi}^1,\\
\vartheta^2_1 = -\frac{\sqrt{2}}{4} \Phi^1 - \frac{\sqrt{2}}{4} \ii \bar{\Phi}^2, & \vartheta^2_2 = \frac{\sqrt{2}}{4} \ii \Phi^1 + \frac{\sqrt{2}}{4} \bar{\Phi}^1.
\end{array}\]

The secondary structure equations allow us to compute the curvature forms $\psi^i_j$:
\[\begin{array}{l}
\psi^1_1 = \frac{1}{8} (\ii \Phi^1 \wedge \Phi^2 + \Phi^1 \wedge \bar{\Phi}^1 + 2\ii \Phi^1 \wedge \bar{\Phi}^2 - 2\ii \Phi^2 \wedge \bar{\Phi}^1 - \Phi^2 \wedge \bar{\Phi}^2 + \ii \bar{\Phi}^1 \wedge \bar{\Phi}^2),\\
\psi^1_2 = -\frac{1}{8} (\Phi^1 \wedge \Phi^2 + \ii \Phi^1 \wedge \bar{\Phi}^1 + 2 \Phi^1 \wedge \bar{\Phi}^2 - \ii \Phi^2 \wedge \bar{\Phi}^2 + 3 \bar{\Phi}^1 \wedge \bar{\Phi}^2),\\
\psi^2_1 = \frac{1}{8}(3 \Phi^1 \wedge \Phi^2 + \ii \Phi^1 \wedge \bar{\Phi}^1 - 2 \Phi^2 \wedge \bar{\Phi}^1 - \ii \Phi^2 \wedge \bar{\Phi}^2 + \bar{\Phi}^1 \wedge \bar{\Phi}^2),\\
\psi^2_2 = \frac{1}{8}(\ii \Phi^1 \wedge \Phi^2 - \Phi^1 \wedge \bar{\Phi}^1 + \Phi^2 \wedge \bar{\Phi}^2 + \ii \bar{\Phi}^1 \wedge \bar{\Phi}^2).
\end{array}\]
and finally the Chern--Ricci tensor of the canonical connection, which is:
\[\frac{1}{4} \ii \Phi^1 \wedge \bar{\Phi}^2 - \frac{1}{4} \ii \Phi^2 \wedge \bar{\Phi}^1.\]
\end{proof}

\section{A special almost complex structure on \texorpdfstring{$\cM(\lambda)$}{M(l)}}\label{s7}

Let $J$ be the almost complex structure on $\cM(\lambda)$ defined on the tangent fields \eqref{eq: tangent vectors cm(lambda)} by
\[J e_1 = e_2, \qquad J e_2 = -e_1, \qquad J e_3 = e_4, \qquad J e_4 = -e_3,\]
and let $X = (\cM(\lambda), J)$ be the corresponding almost complex manifold. From the complex point of view, we introduce the $(0, 1)$-forms
\[\varphi^1 = e^1 + \ii e^2, \qquad \varphi^2 = e^3 + \ii e^4,\]
and thanks to the structure equations \eqref{eq: structure equations cm(lambda)} we see that
\[\begin{array}{l}
d\varphi^1 = \frac{1}{2} \varphi^1 \wedge \bar{\varphi}^1,\\
d\varphi^2 = \frac{1}{4} \varphi^1 \wedge \varphi^2 - \frac{1}{4}(1 + 2\lambda) \varphi^1 \wedge \bar{\varphi}^2 - \frac{1}{4} \varphi^2 \wedge \bar{\varphi}^1 - \frac{1}{4}(1 + 2\lambda) \bar{\varphi}^1 \wedge \bar{\varphi}^2.
\end{array}\]

Hence
\[\delbar(\varphi^1 \wedge \varphi^2) = -\frac{1}{4} \varphi^1 \wedge \varphi^2 \wedge \bar{\varphi}^1,\]
and so for a smooth complex valued function $f: \cM(\lambda) \longrightarrow \IC$ we have that
\begin{equation}\label{eq: pseudohol cond M(lambda)}
\delbar\pa{f\cdot(\varphi^1 \wedge \varphi^2)^{\tensor m}} = \pa{\delbar f - \frac{1}{4}mf \bar{\varphi}^1} \tensor (\varphi^1 \wedge \varphi^2)^{\tensor m}.
\end{equation}

\begin{prop}\label{prop: kod dim m(lambda)}
The Kodaira dimension of $(\cM(\lambda), J)$ is
\[\kod(\cM(\lambda), J) = -\infty.\]
\end{prop}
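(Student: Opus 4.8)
The plan is to translate the pseudoholomorphy of a pluricanonical section into a first-order linear PDE system on the universal cover $\IR^4$, expand in Fourier series in $x,y,z$ via \eqref{eq: Fourier M(lambda)}, and then force every Fourier mode to vanish, so that $P_m(\cM(\lambda),J)=0$ for all $m\ge 1$ and hence $\kod(\cM(\lambda),J)=-\infty$.

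First I would use \eqref{eq: pseudohol cond M(lambda)}: the section $f\cdot(\varphi^1\wedge\varphi^2)^{\otimes m}$ of $\cK_X^{\otimes m}$ is pseudoholomorphic exactly when $\delbar f=\frac14 m f\,\bar\varphi^1$, i.e.\ when $\bar\cX_1(f)=\frac14 mf$ and $\bar\cX_2(f)=0$, where $\bar\cX_1=\frac12(e_1+\ii e_2)$ and $\bar\cX_2=\frac12(e_3+\ii e_4)$. Writing $f=u+\ii v$ with $u,v$ real and using the frame \eqref{eq: tangent vectors cm(lambda)}, this becomes the system
\[\left\{ \begin{array}{l}
\partial_t u - e^{t}\,\partial_y v - \frac12 m u = 0,\\
\partial_t v + e^{t}\,\partial_y u - \frac12 m v = 0,\\
e^{\lambda t}\,\partial_x u - e^{-(1+\lambda)t}\,\partial_z v = 0,\\
e^{\lambda t}\,\partial_x v + e^{-(1+\lambda)t}\,\partial_z u = 0,
\end{array} \right.\]
for $u,v$ regarded as functions on $\IR^4$ invariant under the lattice defining $\cM(\lambda)$. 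Substituting $u=\sum_{(a,b,c)\in\IZ^3}u_{abc}(t)\,e^{2\pi\ii(ax+by+cz)}$ and the analogous expansion for $v$, the third and fourth equations give, mode by mode, a homogeneous linear system in $(u_{abc},v_{abc})$ whose matrix has determinant $a^2 e^{2\lambda t}+c^2 e^{-2(1+\lambda)t}$. Being a sum of two real squares this never vanishes unless $a=c=0$, so $u_{abc}=v_{abc}=0$ for every mode with $(a,c)\ne(0,0)$, and consequently $u$ and $v$ depend only on $y$ and $t$.

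The step I expect to be the main obstacle is upgrading this to ``$u$ and $v$ depend only on $t$''; here the non-translational action of the lattice on the $y$-coordinate is crucial, in contrast with the nilmanifold situation of Propositions \ref{prop: kod dim n 1}--\ref{prop: kod dim n 3}. Lattice invariance reads $u(y,t)=u(e^{\delta}y+\beta,t+\delta)$ for all $\beta,\delta\in\IZ$: with $\delta=0$ the map $y\mapsto u(y,t)$ has period $1$, while with $\delta=1$, $\beta=0$ one gets $u(ey,t+1)=u(y,t)=u(y+1,t)=u(ey+e,t+1)$, so $y\mapsto u(y,t+1)$ also has period $e$. Since $e$ is irrational, $\IZ+e\IZ$ is dense in $\IR$, and by continuity $u(\cdot,t+1)$ is constant; as $t$ is arbitrary, $u$ (and likewise $v$) depends only on $t$ and is $1$-periodic.

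Finally, with $\partial_x u=\partial_y u=\partial_z u=0$ the first equation of the system reduces to $\partial_t u=\frac12 m u$, so $u(t)=u(0)e^{mt/2}$; $1$-periodicity together with $m\ge 1$ forces $u(0)=0$, hence $u\equiv 0$, and similarly $v\equiv 0$ from the second equation. Thus $f\equiv 0$, so $P_m(\cM(\lambda),J)=0$ for every $m\ge 1$, which gives $\kod(\cM(\lambda),J)=-\infty$.
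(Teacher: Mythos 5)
Your proposal is correct, and its first half (the pseudoholomorphy system, the elimination of the $x$- and $z$-dependence from the third and fourth equations) matches the paper's argument in substance — the paper derives an elliptic equation in $x,z$, you read off the same conclusion from the Fourier modes via the determinant $a^2e^{2\lambda t}+c^2e^{-2(1+\lambda)t}$. Where you genuinely diverge is the treatment of the $y$-dependence: the paper keeps the Fourier expansion $u=\sum_b u_b(t)e^{2\pi\ii by}$, writes the resulting $2\times 2$ linear ODE system in $t$ for $(u_b,v_b)$, decouples it by a linear substitution, solves it explicitly, and then excludes nonzero solutions by a reality/symmetry observation on the explicit solutions. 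You instead kill the $y$-dependence outright using the dilation part of the lattice: invariance under $(y,t)\mapsto(ey,t+1)$ together with $1$-periodicity in $y$ shows that $u(\cdot,t+1)$ has the two periods $1$ and $e$, and since $e\notin\Q$ the period group $\IZ+e\IZ$ is dense, so by continuity $u$ (and $v$) depend only on $t$; then the system collapses to $u'=\tfrac m2 u$, $v'=\tfrac m2 v$, and $1$-periodicity (equivalently boundedness, by compactness) in $t$ with $m\ge 1$ forces $u=v=0$. Your route is shorter and avoids both the explicit integration of the coupled ODE and the paper's somewhat delicate argument that $u_b$ is real-valued while $v_b$ is purely imaginary; it also makes visible exactly where the solvable (non-translational) lattice action and the hypothesis $m\ge 1$ enter. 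Its only extra ingredient is the (standard) irrationality of $e$, which the paper's method does not need; the paper's Fourier-plus-ODE scheme has the advantage of being uniform with the computations carried out for $M(k)$ and $\cN$ and of not relying on any arithmetic property of the dilation factor.
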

\begin{proof}
It follows from \eqref{eq: pseudohol cond M(lambda)} that a smooth pluricanonical section of the form $f\cdot(\varphi^1 \wedge \varphi^2)^{\tensor m}$, with $m \geq 1$, is pseudoholomorphic if and only if $\delbar f - \frac{1}{4}mf \bar{\varphi}^1 = 0$. As the vector fields of type $(1, 0)$ corresponding to $\varphi^1$, $\varphi^2$ are
\[\cX_1 = \frac{1}{2}\pa{e_1 - \ii e_2}, \qquad \cX_2 = \frac{1}{2}\pa{e_3 - \ii e_4},\]
the previous equation is equivalent to the system
\[\left\{ \begin{array}{l}
\bar{\cX}_1(f) - \frac{1}{4}mf = 0\\
\bar{\cX}_2(f) = 0.
\end{array} \right.\]
Using the definition \eqref{eq: tangent vectors cm(lambda)} for the fields $e_i$ and writing $f = u + \ii v$, once we take the real and the imaginary parts of the equations in the previous system we find that it becomes equivalent to
\begin{equation}\label{eq: system m(lambda)}
\left\{ \begin{array}{l}
\frac{\partial u}{\partial t} - e^t \frac{\partial v}{\partial y} - \frac{1}{2} mu = 0\\
\frac{\partial v}{\partial t} + e^t \frac{\partial u}{\partial y} - \frac{1}{2} mv = 0\\
e^{\lambda t} \frac{\partial u}{\partial x} - e^{-(1 + \lambda)t} \frac{\partial v}{\partial z} = 0\\
e^{\lambda t} \frac{\partial v}{\partial x} + e^{-(1 + \lambda)t} \frac{\partial u}{\partial z} = 0.
\end{array} \right.
\end{equation}
From the third and fourth equations of this system we see that
\[e^{(1 + 2\lambda)t} \frac{\partial^2 v}{\partial x^2} + e^{-(1 + 2\lambda)t} \frac{\partial^2 v}{\partial z^2} = 0\]
and similarly for $u$. As a consequence, both $u$ and $v$ do not depend neither on $x$ nor on $z$.

Thanks to this information, the Fourier series expansion \eqref{eq: Fourier M(lambda)} of $u$ and $v$ simplify to
\[u = \sum_{k \in \IZ} u_k(t) e^{2\pi \ii k y}, \qquad v = \sum_{k \in \IZ} v_k(t) e^{2\pi \ii k y}.\]
In fact, if $u_I(t) \neq 0$ (resp.~$v_I(t) \neq 0$) then we must have $\lambda_I = \nu_I = 0$ because otherwise $u$ (resp.~$v$) would depend on $x$ or $z$. Hence the sum is taken over $I \in \IZ^3$ such that $(\lambda_I, \mu_I, \nu_I) = (0, k, 0)$, i.e., over all the integral multiples of the second row of the matrix $P$ associated to the lattice.
The first two equations of \eqref{eq: system m(lambda)} then give us the differential system for the Fourier coefficients $u_k$, $v_k$
\[\left( \begin{array}{c}
\frac{\partial u_k}{\partial t}\\
\frac{\partial v_k}{\partial t}
\end{array} \right) = \left( \begin{array}{cc}
\frac{1}{2}m & 2\pi\ii e^t k\\
-2\pi\ii e^t k & \frac{1}{2}m
\end{array} \right) \left( \begin{array}{c}
u_k\\
v_k
\end{array} \right).\]
If we make the substitution
\[\left( \begin{array}{c}
\xi_k\\
\zeta_k
\end{array} \right) = \left( \begin{array}{cc}
1 & \ii\\
-1 & \ii
\end{array} \right) \left( \begin{array}{c}
u_k\\
v_k
\end{array} \right),\]
then the system decouples and we obtain
\[\left\{ \begin{array}{l}
\frac{\partial \xi_k}{\partial t} = \pa{\frac{1}{2}m + 2\pi k e^t} \xi_k\\
\frac{\partial \zeta_k}{\partial t} = \pa{\frac{1}{2}m - 2\pi k e^t} \zeta_k,
\end{array} \right.\]
which can easily be solved. Explicitly, the solution $(u_k, v_k)$ is
\[\begin{array}{l}
u_k(t) = \frac{1}{2} e^{\frac{1}{2}mt} \pa{c_k e^{2\pi k e^t} - d_k e^{-2\pi k e^t}},\\
v_k(t) = -\frac{\ii}{2} e^{\frac{1}{2}mt} \pa{c_k e^{2\pi k e^t} + d_k e^{-2\pi k e^t}},
\end{array}\]
with $c_k, d_k \in \IC$. We can then see that
\[f = f(y, t) = \sum_{k \in \IZ} f_k(t) e^{2\pi \ii k y}, \qquad f_k(t) = u_k(t) + \ii v_k(t) = e^{\frac{1}{2}mt} c_k e^{2\pi k e^t}.\]
Recall from \eqref{eq: periodicity M(lambda)} that in order for $f$ to define a function on $\cM(\lambda)$ we must have that $f(e^\delta y, t + \delta) = f(y, t)$ for every $\delta \in \IZ \cdot t_0$. For any fixed $\delta \in \IZ \cdot t_0$ we can write
\[f(e^\delta y, t + \delta) = \sum_{h \in \IZ} g_h^\delta(t) e^{2 \pi \ii h y},\]
where the Fourier coefficients $g_h^\delta(t)$ can be explicitly computed:
\[g_h^\delta(t) = \int_0^1 f(e^\delta y, t + \delta) e^{-2\pi \ii h y} dy = e^{\frac{1}{2}m(t + \delta)} \sum_{k \in \IZ} c_k e^{2\pi k e^{t + \delta}} I(k e^\delta - h),\]
where we put
\[I(\eta) = \int_0^1 e^{2\pi \ii \eta y} dy = \left\{ \begin{array}{ll}
1 & \text{for } \eta = 0,\\
\frac{1}{2\pi \ii \eta} \pa{e^{2\pi \ii \eta} - 1} & \text{for } \eta \neq 0.
\end{array} \right.\]
Because of the invariance with respect to the lattice we have $f_h(t) = g_h^\delta(t)$ for every $\delta \in \IZ \cdot t_0$, so in particular $f_h = \lim_{\delta \rightarrow -\infty} g_h^\delta$. Since
\[\begin{array}{l}
e^{\frac{1}{2} m (t + \delta)} \longrightarrow 0\\
e^{ 2\pi k e^{t + \delta}} \longrightarrow 1\\
I(k e^\delta - h) \longrightarrow I(-h) = \left\{ \begin{array}{ll}
1 & \text{for } h = 0,\\
0 & \text{for } h \neq 0
\end{array} \right.
\end{array}\]
we deduce that
\[f_h(t) = \lim_{\delta \rightarrow -\infty} g_h^\delta(t) = 0.\]
It follows that then $f \equiv 0$, the plurigenera of $(\cM(\lambda), J)$ are
\[P_m(\cM(\lambda)) = 0 \qquad \forall m\geq 1,\]
hence
\[\kod(\cM(\lambda), J) = -\infty.\]
\end{proof}

\section{A twistorial approach}\label{sect: twistor}
In the following we will denote by $M$ one of the three four dimensional solvmanifolds without complex structures described before, unless otherwise specified. Moreover we will denote by $\{e_1,e_2,e_3,e_4\}$ and $\{e^1,e^2,e^3,e^4\}$ the global frame for $TM$ and $T^*M$ respectively.

\subsection{Almost hypercomplex structure on \texorpdfstring{$M(k)$}{M(k)}, \texorpdfstring{$\cN$}{M}, \texorpdfstring{$\cM(\lambda)$}{M(l)}}

Consider the following  endomorphisms of the tangent bundle of $M$:
\[J_0 = \left( \begin{array}{cccc}
0 &-1 & 0 & 0\\
1 & 0 & 0 & 0\\
0 & 0 & 0 & -1\\
0 & 0 & 1 & 0
\end{array} \right), \, J_1 = \left( \begin{array}{cccc}
0 &0 & -1 & 0\\
0 & 0 & 0 & 1\\
1 & 0 & 0 & 0\\
0 & -1 & 0 & 0
\end{array} \right). \] 
It is  easy to see that $J_0 J_1 = -J_1 J_0$, thus we define:
\[J_2 = J_0 J_1 = \left( \begin{array}{cccc}
0 & 0 & 0 & -1\\
0 & 0 & -1 & 0\\
0 & 1 & 0 & 0\\
1 & 0 & 0 & 0
\end{array} \right) \] 
and we get that $J_0, J_1, J_2$ define an almost hypercomplex structure on $M$.

Let $\omega_0$ and  $g_0$ be defined by:
$$\omega_0 = e^1 \wedge e^2 + e^3 \wedge e^4$$
$$g_0= e^1 \tensor e^1 + e^2 \tensor e^2 + e^3 \tensor e^3 + e^4 \tensor e^4.$$
We have
\[g_0(\blank, \blank) = \omega_0(\blank, J_0 \blank).\]
Then we consider $$\omega_1(\blank, \blank) = g_0(J_1 \blank, \blank) = e^1 \wedge e^3 - e^2 \wedge e^4$$ and $$\omega_2(\blank, \blank) = g_0(J_2 \blank, \blank) =e^1 \wedge e^4 +e^2 \wedge e^3.$$

A direct computation gives the following results.

\begin{lemma}
$g_0(J_i \blank, J_i\blank) = g_0(\blank, \blank)$ for $i = 0, 1, 2$.
\end{lemma}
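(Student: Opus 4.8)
The plan is to reduce the statement to an elementary matrix identity. Since $g_0 = e^1 \tensor e^1 + e^2 \tensor e^2 + e^3 \tensor e^3 + e^4 \tensor e^4$ and $\{e^1, e^2, e^3, e^4\}$ is the coframe dual to $\{e_1, e_2, e_3, e_4\}$, the frame $\{e_1, e_2, e_3, e_4\}$ is $g_0$-orthonormal; equivalently, in this frame the Gram matrix of $g_0$ is the identity $\mathrm{Id}$. Hence, writing tangent vectors $X, Y$ in coordinates with respect to this frame and regarding each $J_i$ as the displayed $4 \times 4$ matrix, the compatibility condition $g_0(J_i X, J_i Y) = g_0(X, Y)$ is equivalent to $J_i^{\mathsf T} J_i = \mathrm{Id}$, i.e., to $J_i$ being an orthogonal matrix.

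So the task reduces to observing that $J_0$, $J_1$ and $J_2$ are orthogonal. The quickest way to see this is to note that each $J_i$ is skew-symmetric, $J_i^{\mathsf T} = -J_i$, which is immediate by inspection of the explicit matrices; combined with $J_i^2 = -\mathrm{Id}$ (already recorded when the almost hypercomplex structure was introduced) this gives $J_i^{\mathsf T} J_i = -J_i \cdot J_i = -J_i^2 = \mathrm{Id}$, whence $g_0(J_i X, J_i Y) = X^{\mathsf T} J_i^{\mathsf T} J_i Y = X^{\mathsf T} Y = g_0(X, Y)$. Alternatively one may simply remark that each $J_i$ is a signed permutation matrix, hence automatically orthogonal.

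There is essentially no obstacle here: the only point deserving a line of care is that the matrices $J_0, J_1, J_2$ are written in the frame $\{e_1, e_2, e_3, e_4\}$, which is precisely the frame in which $g_0$ is represented by the identity, so that no change-of-basis correction intervenes. The remaining verification $J_i^{\mathsf T} = -J_i$ for $i = 0, 1, 2$ (equivalently $J_i^{\mathsf T} J_i = \mathrm{Id}$) is a direct inspection of the three $4 \times 4$ matrices and can safely be left to the reader, which is why the statement is phrased as following from a direct computation.
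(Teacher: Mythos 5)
Your proof is correct and matches the paper's approach: the paper simply asserts the lemma follows from a direct computation, and your reduction to $J_i^{\mathsf T} J_i = \mathrm{Id}$ in the $g_0$-orthonormal frame $\{e_1,e_2,e_3,e_4\}$ (via skew-symmetry of the $J_i$ together with $J_i^2=-\mathrm{Id}$) is just a tidy way of organizing that same verification.
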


\begin{lemma}
$\omega_0, \omega_1, \omega_2$ are self-dual. 
\end{lemma}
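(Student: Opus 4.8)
The plan is to compute the Hodge star operator $\star$ of the metric $g_0$, with respect to the natural orientation, directly on each of the three forms $\omega_0,\omega_1,\omega_2$ and to check that $\star\omega_i=\omega_i$. First I would fix on $M$ the orientation induced by the volume form $\mathrm{vol}=e^1\wedge e^2\wedge e^3\wedge e^4$ (this is the same orientation already implicitly in use, since e.g. $\omega_0\wedge\omega_0=2\,e^1\wedge e^2\wedge e^3\wedge e^4>0$, and analogously for $\omega_1$ and $\omega_2$). Because $g_0=e^1\tensor e^1+e^2\tensor e^2+e^3\tensor e^3+e^4\tensor e^4$, the global coframe $\{e^1,e^2,e^3,e^4\}$ is then a positively oriented $g_0$-orthonormal coframe at every point of each of $M(k)$, $\cN$ and $\cM(\lambda)$, so $\star$ is just the standard Euclidean Hodge star in dimension four; on decomposable $2$-forms it is given by $\star(e^i\wedge e^j)=\operatorname{sgn}(i,j,k,l)\,e^k\wedge e^l$, where $(i,j,k,l)$ is the corresponding permutation of $(1,2,3,4)$.

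From this rule I would read off the six values needed (writing $e^{ij}:=e^i\wedge e^j$):
\[
\star e^{12}=e^{34},\quad \star e^{34}=e^{12},\quad \star e^{13}=-e^{24},\quad \star e^{24}=-e^{13},\quad \star e^{14}=e^{23},\quad \star e^{23}=e^{14},
\]
and then apply them term by term to $\omega_0=e^{12}+e^{34}$, $\omega_1=e^{13}-e^{24}$ and $\omega_2=e^{14}+e^{23}$. This gives $\star\omega_0=e^{34}+e^{12}=\omega_0$, $\star\omega_1=-e^{24}+e^{13}=\omega_1$ and $\star\omega_2=e^{23}+e^{14}=\omega_2$, so each $\omega_i$ lies in the $(+1)$-eigenspace of $\star$ acting on $2$-forms; that is exactly the assertion that $\omega_0,\omega_1,\omega_2$ are self-dual.

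Since the whole argument is this short finite check against the standard four-dimensional Hodge-star table, there is no real obstacle; the only point worth spelling out is that self-duality depends on the choice of orientation, so I would state explicitly at the outset that we work throughout with the orientation $e^1\wedge e^2\wedge e^3\wedge e^4$. If it is convenient for the later twistorial discussion, I would also add the remark that $\omega_0,\omega_1,\omega_2$ are pointwise linearly independent, hence form a global frame for the rank-three bundle of self-dual $2$-forms; this is coherent with the fact that they are the fundamental forms associated with $J_0,J_1,J_2$ and $g_0$ and with the almost hypercomplex structure just constructed.
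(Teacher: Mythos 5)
Your proof is correct and is essentially the computation the paper has in mind: with the orientation $e^1\wedge e^2\wedge e^3\wedge e^4$ the coframe $\{e^1,e^2,e^3,e^4\}$ is $g_0$-orthonormal and positively oriented, and applying the standard four-dimensional Hodge-star table termwise gives $\star\omega_i=\omega_i$ for $i=0,1,2$, which is exactly the ``direct computation'' invoked in the paper. Your explicit remark about the orientation dependence and the observation that the $\omega_i$ frame the bundle of self-dual $2$-forms are accurate and consistent with the twistorial discussion that follows.
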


\begin{lemma}
If $M$=\texorpdfstring{$M(k)$}{M(k)} then $d\omega_0 = 0, d\omega_1 = k e^2 \wedge e^3 \wedge e^4, d\omega_2 = -k e^1 \wedge e^3 \wedge e^4$.\\
If $M$= \texorpdfstring{$\cN$}{M} then $d\omega_0 =-e^1\wedge e^2\wedge e^4, \, d\omega_1 =e^1 \wedge e^2 \wedge e^3, \,d\omega_2 =0$.\\
If $M$=\texorpdfstring{$\cM(\lambda)$}{M(l)} then $d\omega_0 =e^1\wedge e^3\wedge e^4, \, d\omega_1 =-\lambda e^1 \wedge e^2 \wedge e^4, \,d\omega_2 =-(1+\lambda)e^1 \wedge e^2 \wedge e^3$.
\end{lemma}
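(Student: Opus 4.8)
The plan is to prove all three cases by the same direct computation, exploiting the fact that each of $\omega_0$, $\omega_1$, $\omega_2$ is a \emph{constant}-coefficient $2$-form in the global coframe $\{e^1,e^2,e^3,e^4\}$: explicitly $\omega_0 = e^1\wedge e^2 + e^3\wedge e^4$, $\omega_1 = e^1\wedge e^3 - e^2\wedge e^4$ and $\omega_2 = e^1\wedge e^4 + e^2\wedge e^3$. Because the coefficients are constants, the exterior derivative acts only through the graded Leibniz rule $d(e^i\wedge e^j) = de^i\wedge e^j - e^i\wedge de^j$, so the entire computation reduces to substituting the structure equations recalled in Section~\ref{almost-complex-solvmanifolds} and collecting the surviving monomials.

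Concretely, I would treat the three manifolds in turn. For $M(k)$ I substitute $de^1 = k\,e^1\wedge e^3$, $de^2 = -k\,e^2\wedge e^3$, $de^3 = de^4 = 0$; for $\cN$ I substitute the equations \eqref{eq: structure equations gothM}; and for $\cM(\lambda)$ the equations \eqref{eq: structure equations cm(lambda)}. In each case most of the terms produced by Leibniz vanish immediately because they contain a repeated factor $e^i$, and the one or two surviving terms combine, after reordering the wedge factors into a fixed order, into a single $3$-form. For instance, on $\cM(\lambda)$ one finds $d(e^3\wedge e^4) = -\lambda\, e^1\wedge e^3\wedge e^4 + (1+\lambda)\,e^1\wedge e^3\wedge e^4 = e^1\wedge e^3\wedge e^4$, so that $d\omega_0 = e^1\wedge e^3\wedge e^4$, and the remaining cases are entirely analogous.

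There is no genuine obstacle here: the only thing requiring care is the sign bookkeeping, namely the sign in the graded Leibniz rule (here always $-1$, since every $e^i$ is a $1$-form) and the signs picked up when a wedge of three $1$-forms is permuted into the normalized order. I would therefore present the argument as three compact displays, one per manifold, each listing $d\omega_0$, $d\omega_1$, $d\omega_2$; no Fourier analysis, no differential equations, and none of the earlier propositions are needed — only the structure equations and the definitions of the $\omega_i$ recorded immediately before the statement.
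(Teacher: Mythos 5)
Your method is exactly the paper's: the authors offer nothing beyond ``a direct computation'', and expanding each $d\omega_i$ with the graded Leibniz rule and the structure equations of the relevant manifold is precisely that computation; your sample check $d(e^3\wedge e^4)=e^1\wedge e^3\wedge e^4$ on $\cM(\lambda)$ is correct, and no Fourier analysis or earlier propositions are needed, as you say.

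One caveat before you assert blanket agreement with the displayed statement. Carrying out the $M(k)$ case for $\omega_2$ gives $d\omega_2 = d(e^1\wedge e^4 + e^2\wedge e^3) = de^1\wedge e^4 = k\, e^1\wedge e^3\wedge e^4$, with a \emph{plus} sign, because $de^1 = k\, e^1\wedge e^3$ while $de^2\wedge e^3$, $e^2\wedge de^3$ and $e^1\wedge de^4$ all vanish. This is consistent with the analogous lemma in the Norden section, where $d\tilde{\omega}_2 = d(e^1\wedge e^4 - e^2\wedge e^3) = k\, e^1\wedge e^3\wedge e^4$ rests on the very same two identities, but it has the opposite sign to the entry $d\omega_2 = -k\, e^1\wedge e^3\wedge e^4$ in the statement you are proving. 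So either that entry carries a sign typo (the most plausible reading, given the internal consistency with the Norden computation), or a sign convention different from the one recorded just before the lemma is being used; your write-up, which claims the computation reproduces all stated values, silently passes over this discrepancy. The remaining eight entries check out exactly as you describe.
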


\subsection{The twistor bundle of \texorpdfstring{$M(k)$}{M(k)}, \texorpdfstring{$\cN$}{N}, \texorpdfstring{$\cM(\lambda)$}{M(l)}}

Let us consider the Riemannian manifold $(M, g_0)$ with the orientation defined by $e_1, e_2, e_3, e_4$. Let $P_{g_0} = P_{g_0}(M,SO(4))$ be the $SO(4)$-principal bundle of oriented $g_0$-orthonormal frames on $M$. $SO(4)$ acts on the right on $P_{g_0}$ and on the left on $SO(4)/U(2)$. The \emph{twistor space} of $(M, g_0)$ is the associated bundle to $P_{g_0}$ defined as the quotient $Z_{g_0} = Z(M, g_0)$ of $P_{g_0} \times SO(4)/U(2)$ with respect to previous action. $Z_{g_0}$ is a trivial bundle over $M$ with fibre $SO(4)/U(2)$. Let $x \in M$, the fibre $(Z_{g_0})_x$ parametrises the complex structures on $T_x M$ compatible with the metric $g_0$ and the fixed orientation. A global section is an almost complex structure on $M$ compatible with the metric and the orientation. For more details on the construction and main properties of the twistor bundle, we address the interested reader to \cite{deBartolomeis-Nannicini}.

By using the twistorial description we have immediately that any almost complex structure $J$ on $M$, compatible with the metric $g_0$ and the fixed orientation, is given by:
\[J = \alpha J_0 + \beta J_1 + \gamma J_2\]
where $\alpha, \beta, \gamma$ are smooth functions on $M$ such that $\alpha^2 + \beta^2 + \gamma^2 = 1$.

\begin{prop}
Let $J$ be an almost complex structure on $M$=\texorpdfstring{$M(k)$}{M(k)} compatible with the metric $g_0$ and the given orientation and let $\omega$ be the K\"ahler form of $J$, $\omega(\blank, \blank):= g_0(J \blank, \blank)$, then $d\omega = 0$ if and only if $\omega = c\omega_0$ for some real constant $c$.
\end{prop}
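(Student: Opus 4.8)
The plan is to combine the twistorial parametrisation of $J$ with Hodge theory on the compact oriented Riemannian $4$-manifold $(M(k), g_0)$. By the twistorial description recalled just above, $J = \alpha J_0 + \beta J_1 + \gamma J_2$ for smooth functions $\alpha,\beta,\gamma$ on $M(k)$ with $\alpha^2 + \beta^2 + \gamma^2 = 1$, and hence $\omega = g_0(J\blank,\blank) = \alpha\omega_0 + \beta\omega_1 + \gamma\omega_2$. The implication $\omega = c\omega_0 \Rightarrow d\omega = 0$ is immediate, since $d\omega_0 = 0$ on $M(k)$ by the preceding lemma; all the content is in the converse, which I would prove as follows.

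First I would observe that $\omega$ is self-dual for $g_0$: at each point it is a real linear combination of the self-dual forms $\omega_0,\omega_1,\omega_2$ (again by the preceding lemma), so $\star\omega = \omega$. Consequently, if $d\omega = 0$ then also $d^{*}\omega = -\star d\star\omega = -\star d\omega = 0$, so $\omega$ is a $g_0$-harmonic $2$-form; the same holds for $\omega_0$, which is closed and self-dual.

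Next I would compute $H^{2}_{\mathrm{dR}}(M(k);\mathbb R)$. Since $M(k)$ is a compact quotient of a completely solvable Lie group, its de Rham cohomology is computed by the left-invariant forms, i.e.\ by the Chevalley--Eilenberg complex attached to the structure equations $de^{1} = ke^{1}\wedge e^{3}$, $de^{2} = -ke^{2}\wedge e^{3}$, $de^{3} = de^{4} = 0$. A direct check gives $H^{2}_{\mathrm{dR}}(M(k);\mathbb R) = \mathbb R\,[e^{1}\wedge e^{2}]\oplus\mathbb R\,[e^{3}\wedge e^{4}]$ (the classes of $e^{1}\wedge e^{3}$ and $e^{2}\wedge e^{3}$ being exact, while $e^{1}\wedge e^{4}$, $e^{2}\wedge e^{4}$ are not closed), and the cup product in this basis is hyperbolic, since $[e^{1}\wedge e^{2}]^{2} = [e^{3}\wedge e^{4}]^{2} = 0$ and $[e^{1}\wedge e^{2}]\cup[e^{3}\wedge e^{4}] = [e^{1}\wedge e^{2}\wedge e^{3}\wedge e^{4}]\neq 0$. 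Thus its signature is $(1,1)$ and $b^{+}(M(k)) = 1$.

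Finally I would conclude. As $\omega$ is closed, $[\omega] = c_{1}[e^{1}\wedge e^{2}] + c_{2}[e^{3}\wedge e^{4}]$ for some $c_{1},c_{2}\in\mathbb R$; set $\theta := \omega - \tfrac{c_{1}+c_{2}}{2}\,\omega_0$, so that $\theta$ is closed (both $\omega$ and $\omega_0$ are), self-dual (a difference of self-dual forms), hence $g_0$-harmonic, with $[\theta] = \tfrac{c_{1}-c_{2}}{2}\big([e^{1}\wedge e^{2}] - [e^{3}\wedge e^{4}]\big)$. Then $0 \le \|\theta\|_{L^{2}}^{2} = \int_{M(k)}\theta\wedge\star\theta = \int_{M(k)}\theta\wedge\theta = -\tfrac12(c_{1}-c_{2})^{2}\int_{M(k)}e^{1}\wedge e^{2}\wedge e^{3}\wedge e^{4} \le 0$, where the last integral is positive for the chosen orientation and $\int_{M(k)}\theta\wedge\theta$ depends only on $[\theta]$ by Stokes. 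Hence $\theta = 0$, i.e.\ $\omega = c\,\omega_0$ with $c = \tfrac{c_{1}+c_{2}}{2}$ (and in fact $c_{1}=c_{2}$; the normalisation $\alpha^2+\beta^2+\gamma^2 = 1$ then forces $c = \pm1$). The only non-formal ingredient is the computation giving $b^{+}(M(k)) = 1$; everything else is immediate. A more computational alternative, closer in spirit to the earlier sections, would be to expand $d\omega = 0$ via the preceding lemma into the first-order PDE system for $(\alpha,\beta,\gamma)$ and to analyse it by Fourier series exactly as in the proofs of Propositions~\ref{prop: kod dim m(k) 1} and \ref{prop: kod dim m(k) 2}, but the Hodge-theoretic route above is shorter.
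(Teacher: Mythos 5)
Your proof is correct, and it follows the same overall Hodge-theoretic strategy as the paper: both start from the twistorial parametrisation $\omega=\alpha\omega_0+\beta\omega_1+\gamma\omega_2$, use self-duality of $\omega$ to upgrade $d\omega=0$ to harmonicity, and ultimately rest on knowing the second cohomology of $M(k)$. The difference is in the closing step. The paper asserts that the $g_0$-harmonic $2$-forms are exactly $c_1\,e^1\wedge e^2+c_2\,e^3\wedge e^4$ (implicitly: $b_2(M(k))=2$ by Hattori's theorem for completely solvable solvmanifolds, and these two invariant forms are visibly harmonic, so they span) and then compares coefficients pointwise against $\alpha\omega_0+\beta\omega_1+\gamma\omega_2$ to get $\beta=\gamma=0$ and $\alpha=c_1=c_2$ constant. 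You instead work at the level of cohomology classes: after computing $H^2_{\mathrm{dR}}$ from the Chevalley--Eilenberg complex (again via Hattori) and noting the intersection form is hyperbolic, so $b^+=1$, you subtract $c\,\omega_0$ and kill the closed self-dual difference $\theta$ by the positivity $\theta\wedge\theta=\lvert\theta\rvert^2\,\mathrm{dvol}$ combined with $\int\theta\wedge\theta\le 0$ from the class computation. Your route is slightly longer but has the merit of making explicit the cohomological input that the paper's sentence ``$\omega$ is harmonic and then $\omega=c_1 e^1\wedge e^2+c_2 e^3\wedge e^4$'' leaves implicit, and of never needing to identify harmonic representatives; in fact harmonicity of $\omega$ itself is not even needed in your version, only closedness and self-duality. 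Both arguments are sound, and your observation that the constraint $\alpha^2+\beta^2+\gamma^2=1$ forces $c=\pm1$ is a correct refinement of the statement.
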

\begin{proof}
Let $J = \alpha J_0 + \beta J_1 + \gamma J_2$, then $\omega = \alpha \omega_0 + \beta \omega_1 + \gamma \omega_2$. In particular $d\omega = 0$ implies $d(\star \omega)=0$, where $\star$ is the Hodge operator defined by $g_0$. Hence $\omega$ is harmonic and then $\omega = c_1 e^1 \wedge e^2 + c_2 e^3 \wedge e^4$, for some real constants $c_1, c_2$. On the other hand $\omega = \alpha e^1 \wedge e^2 + \alpha e^3 \wedge e^4$, then $\alpha = c_1 = c_2 = c$ is constant and $\omega = c \omega_0$.
\end{proof}
Analogously
\begin{prop}
Let $J$ be an almost complex structure on $M$=\texorpdfstring{$\cN$}{M} compatible with the metric $g_0$ and the given orientation and let $\omega$ be the K\"ahler form of $J$, $\omega(\blank, \blank):= g_0(J \blank, \blank)$, then $d\omega = 0$ if and only if $\omega = c\omega_2$ for some real constant $c$.
\end{prop}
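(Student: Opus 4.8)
The plan is to run the argument of the preceding proposition essentially verbatim, with $\omega_2$ now playing the role that $\omega_0$ played for $M(k)$. By the twistorial description recalled above, any $J$ on $\cN$ compatible with $g_0$ and the fixed orientation is of the form $J = \alpha J_0 + \beta J_1 + \gamma J_2$ with $\alpha^2 + \beta^2 + \gamma^2 = 1$, so its K\"ahler form is $\omega = \alpha\omega_0 + \beta\omega_1 + \gamma\omega_2$, and since each $\omega_i$ is self-dual, so is $\omega$. Therefore $d\omega = 0$ forces $d(\star\omega) = d\omega = 0$, i.e.\ $\omega$ is a harmonic $2$-form on $(\cN, g_0)$, and the ``only if'' direction is reduced to identifying the space $\mathcal H^2(\cN, g_0)$ of harmonic $2$-forms. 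The ``if'' direction is immediate, since $d\omega_2 = 0$ by the last lemma.

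The one genuinely computational step I would carry out is the identification $\mathcal H^2(\cN, g_0) = \R\langle e^1 \wedge e^4,\ e^2 \wedge e^3\rangle$. First, by Nomizu's theorem the de Rham cohomology of the nilmanifold $\cN$ is carried by left-invariant forms, and a short calculation with the structure equations \eqref{eq: structure equations gothM} shows that the closed invariant $2$-forms are spanned by $e^1 \wedge e^2$, $e^1 \wedge e^3$, $e^1 \wedge e^4$, $e^2 \wedge e^3$, of which $e^1 \wedge e^2 = -de^3$ and $e^1 \wedge e^3 = -de^4$ are exact; hence $b_2(\cN) = 2$, with $[e^1 \wedge e^4]$ and $[e^2 \wedge e^3]$ a basis of $H^2(\cN;\R)$. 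Second, both $e^1 \wedge e^4$ and $e^2 \wedge e^3$ are co-closed, because $\star(e^1 \wedge e^4) = e^2 \wedge e^3$ and $\star(e^2 \wedge e^3) = e^1 \wedge e^4$ (this is precisely the self-duality of $\omega_2$) and both are closed. So these two forms are honest harmonic forms representing a basis of cohomology, and since $\dim \mathcal H^2 = b_2 = 2$ by Hodge theory, they span $\mathcal H^2(\cN, g_0)$.

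Granting this, I would conclude exactly as in the $M(k)$ case: if $\omega$ is harmonic then $\omega = p\, e^1 \wedge e^4 + q\, e^2 \wedge e^3$ for some constants $p, q$, and comparing with
\begin{align*}
\omega &= \alpha\omega_0 + \beta\omega_1 + \gamma\omega_2 \\
&= \alpha(e^1 \wedge e^2 + e^3 \wedge e^4) + \beta(e^1 \wedge e^3 - e^2 \wedge e^4) + \gamma(e^1 \wedge e^4 + e^2 \wedge e^3),
\end{align*}
the coefficients of $e^1 \wedge e^2$ and $e^1 \wedge e^3$ force $\alpha = \beta = 0$, while those of $e^1 \wedge e^4$ and $e^2 \wedge e^3$ give $\gamma = p = q$. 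In particular $\gamma$ is constant and $\omega = \gamma\omega_2 = c\omega_2$, as claimed.

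The main (indeed essentially the only) non-formal obstacle is the determination of $\mathcal H^2(\cN, g_0)$: on a nilmanifold harmonic forms need not be invariant, so one cannot simply assert ``harmonic $=$ invariant''; instead one must combine Nomizu's theorem ($b_2 = 2$) with the explicit exhibition of two linearly independent invariant $2$-forms that are simultaneously $d$- and $\delta$-closed, and then close the argument by the dimension count. Everything else is the same linear-algebra bookkeeping as in the preceding proof.
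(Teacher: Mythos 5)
Your proof is correct and follows essentially the same route as the paper: self-duality of $\omega$ turns closedness into harmonicity, the space of harmonic $2$-forms is identified with $\R\langle e^1\wedge e^4, e^2\wedge e^3\rangle$, and comparing coefficients forces $\alpha=\beta=0$ and $\gamma$ constant. The only difference is that you justify the harmonic-form identification explicitly (Nomizu plus the Hodge-theoretic dimension count), a step the paper's proof asserts without detail, so your write-up is if anything more complete.
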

\begin{proof}
Let $J = \alpha J_0 + \beta J_1 + \gamma J_2$, then $\omega = \alpha \omega_0 + \beta \omega_1 + \gamma \omega_2$. In particular $d\omega = 0$ implies $d(\star \omega)=0$, where $\star$ is the Hodge operator defined by $g_0$. Hence $\omega$ is harmonic and then $\omega = c_1 e^1 \wedge e^4 + c_2 e^2 \wedge e^3$, for some real constants $c_1, c_2$. On the other hand $\omega =-\gamma e^1 \wedge e^4 - \gamma e^2 \wedge e^3$, then $\gamma =-c_1 =-c_2 = c$ is constant and $\omega = c \omega_2$.
\end{proof}

\begin{rem}
For $M$=\texorpdfstring{$\cM(\lambda)$}{M(l)} it is impossible to have $J$ such that $d\omega=0$, so there is not an analogous proposition in this case.
\end{rem}
\subsection{Kodaira dimension of a special family}

Define
\[J_{(a, b, c)} = a J_0 + b J_1 + c J_2, \qquad a, b, c \in \IR \text{ such that } a^2 + b^2 + c^2 = 1.\]
Then $J_{(a, b, c)}$ is a (constant) section of the twistor bundle $Z_{g_0}$ and so defines a $g_0$-compatible almost complex structure. Among the members of this family we can find the almost complex structures $J_0$ (for $(a, b, c) = (1, 0, 0)$), $J_1$ (for $(a, b, c) = (0, 1, 0)$) and $J_2$ (for $(a, b, c) = (0, 0, 1)$), and the only almost K\"ahler ones are $J_0$, $-J_0$ for $M$=\texorpdfstring{$M(k)$}{M(k)} and $J_2$, $-J_2$ for $M$=\texorpdfstring{$\cN$}{M} . As we know that $\operatorname{kod}(M(k), \pm J_0) = 0$, $\operatorname{kod}(\cN, \pm J_0) = -\infty$ and $\operatorname{kod}(\cM(\lambda), \pm J_0) = -\infty$ we can (and will) assume that $a \neq \pm 1$.

We observe that a $J$-adapted basis of vector fields is given by
\[\begin{array}{ll}
\varepsilon_1 = b e_1 + c e_2 - a e_4, & \varepsilon_3 = c e_1 - b e_2 + a e_3,\\
\varepsilon_2 = e_3, & \varepsilon_4 = e_4,
\end{array}\]
hence dually we have
\[\begin{array}{ll}
\varepsilon^1 = \frac{b}{1 - a^2} e^1 + \frac{c}{1 - a^2} e^2, & \varepsilon^3 = \frac{c}{1 - a^2} e^1 - \frac{b}{1 - a^2} e^2\\
\varepsilon^2 = -\frac{ac}{1 - a^2} e^1 + \frac{ab}{1 - a^2} e^2 + e^3, & \varepsilon^4 = \frac{ab}{1 - a^2} e^1 + \frac{ac}{1 - a^2} e^2 + e^4.
\end{array}\]

We can then consider the vector fields and $1$-forms of type $(1, 0)$ given respectively by
\[\cX_1 = \frac{1}{2}(\varepsilon_1 - \ii \varepsilon_2), \qquad \cX_2 = \frac{1}{2}(\varepsilon_3 - \ii \varepsilon_4)\]
and
\[\varphi^1 = \varepsilon^1 + \ii \varepsilon^2, \qquad \varphi^2 = \varepsilon^3 + \ii \varepsilon^4.\]

In the following subsections we will compute the Kodaira dimension of $(M(k), J_{(a, b, 0)})$, $(\cN, J_{(a, b, 0)})$ and $(\cM(\lambda), J_{(a, b, 0)})$. The computations for the general case (with $J_{(a, b, c)}$) are more complicated but, in principle, they can be treated with the same methods we have presented so far. We plan to come back on this subject in a future work.

\subsubsection{Kodaira dimension of $(M(k), J_{(a, b, c)})$} First of all we compute the Kodaira dimension of $(M(k), J_{(a, b, c)})$.

We have that
\[\begin{array}{rl}
\delbar \varphi^1 = & \frac{k}{2(1 - a^2)} ((2 acb + \ii (b^2 - c^2)) \varphi^1 \wedge \bar{\varphi}^1 + \ii bc (1 - a^2) \varphi^1 \wedge \bar{\varphi}^2 +\\
 & - (a(b^2 - c^2) - \ii bc (1 + a^2)) \varphi^2 \wedge \bar{\varphi}^1)
\end{array}\]
and
\[\begin{array}{rl}
\delbar \varphi^2 = & \frac{k}{4(1 - a^2)} (2(-a(b^2 - c^2) + 2 \ii bc) \varphi^1 \wedge \bar{\varphi}^1 + \ii (b^2 - c^2)(a^2 - 1) \varphi^1 \wedge \bar{\varphi}^2 +\\
 & - (4 abc + \ii (b^2 - c^2)(a^2 + 1)) \varphi^2 \wedge \bar{\varphi}^1),
\end{array}\]
from which
\[\delbar(\varphi^1 \wedge \varphi^2) = -\frac{k}{4}\ii \pa{(b^2 - c^2) \varphi^1 \wedge \varphi^2 \wedge \bar{\varphi}^1 + 2 bc \varphi^1 \wedge \varphi^2 \wedge \bar{\varphi}^2}.\]
As a section of $\Omega^{0, 1}_X \tensor \Omega^{2, 0}_X$ this corresponds to $\delbar(\varphi^1 \wedge \varphi^2) = \omega \tensor (\varphi^1 \wedge \varphi^2)$, where
\[\omega = -\frac{k}{4} \ii \pa{(b^2 - c^2) \bar{\varphi}^1 + 2 bc \bar{\varphi}^2}\]
and so
\[\delbar \pa{(\varphi^1 \wedge \varphi^2)^{\tensor m}} = m\omega \tensor (\varphi^1 \wedge \varphi^2)^{\tensor m}.\]

\begin{prop}\label{prop: twistor on M(k)}
We have the following:
\[\operatorname{kod} (M(k), J_{(a, b, 0)}) = \left\{ \begin{array}{ll}
0 & \text{if } (a, b, c) = (\pm 1, 0, 0),\\
-\infty & \text{otherwise}.
\end{array} \right.\]
\end{prop}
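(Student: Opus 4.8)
The plan is to run the same Fourier-series machine as in Propositions~\ref{prop: kod dim m(k) 1} and~\ref{prop: kod dim m(k) 2}. First I would set aside the exceptional values: $J_{(\pm1,0,0)}=\pm J_0$, and $\operatorname{kod}(M(k),\pm J_0)=0$ is already known (for $J_0$ it is Proposition~\ref{prop: kod dim m(k) 1} at $s=0$). So from now on $c=0$ and $b\neq0$, that is $a^2+b^2=1$ with $a\neq\pm1$. Specializing the $\delbar$-computation preceding the statement to $c=0$ gives $\delbar(\varphi^1\wedge\varphi^2)=-\tfrac{k}{4}\ii b^2\,\varphi^1\wedge\varphi^2\wedge\bar{\varphi}^1$ and hence $\delbar\big((\varphi^1\wedge\varphi^2)^{\tensor m}\big)=-\tfrac{k}{4}\ii mb^2\,\bar{\varphi}^1\tensor(\varphi^1\wedge\varphi^2)^{\tensor m}$, so a pluricanonical section $f\,(\varphi^1\wedge\varphi^2)^{\tensor m}$ is pseudoholomorphic exactly when $\bar{\cX}_1(f)=\tfrac{k}{4}\ii mb^2 f$ and $\bar{\cX}_2(f)=0$. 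Since $c=0$ we have $\varepsilon_1=be_1-ae_4$, $\varepsilon_2=e_3$, $\varepsilon_3=-be_2+ae_3$, $\varepsilon_4=e_4$, so by~\eqref{eq: fields M(k)} this system becomes $\tfrac12(be^{kz}\partial_x-a\partial_t+\ii\partial_z)f=\tfrac{k}{4}\ii mb^2 f$ together with $\tfrac12(-be^{-kz}\partial_y+a\partial_z+\ii\partial_t)f=0$: two first-order complex PDEs whose only $z$-dependence lies in the coefficients $e^{kz}$ of $\partial_x$ and $e^{-kz}$ of $\partial_y$.

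Next I would substitute the Fourier expansion~\eqref{eq: Fourier M(k)}, $f=\sum_I f_I(z)\,e^{2\pi\ii(\mu_x(I)x+\mu_y(I)y+r(I)t)}$, turning each equation into a first-order linear ODE in $z$ for $f_I$. Eliminating $f_I'$ between the two ODEs produces, for every $I$ with $f_I\not\equiv0$, an identity among the functions $1$, $e^{kz}$, $e^{-kz}$; comparing coefficients and using $b\neq0$, $a^2+b^2=1$, forces $\mu_y(I)=0$, $a\,\mu_x(I)=0$ and $2\pi r(I)=\tfrac12 kam$. If $a\neq0$ we get $\mu_x(I)=\mu_y(I)=0$, hence $I=(0,0,r)$ by the linear independence of $u$ and $v$, and the common ODE collapses to $f_I'=\tfrac{km}{2}f_I$; since $f_I$ is a Fourier coefficient of a smooth, hence bounded, function on the compact manifold $M(k)$ and $k\neq0$, $m\geq1$, this forces $f_I\equiv0$. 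Carrying this out for all $I$ gives $f\equiv0$, whence $P_m(M(k),J_{(a,b,0)})=0$ for all $m$ and $\operatorname{kod}=-\infty$.

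The remaining subcase $a=0$, i.e. $J_{(0,b,0)}=\pm J_1$, is where I expect the only real obstacle. There the coefficient matching yields only $\mu_y(I)=0$ and $r(I)=0$, leaving $\mu_x(I)$ unconstrained by the ODEs, and the resulting solution $f_I(z)=C\exp\big(\tfrac{km}{2}z-\tfrac{2\pi b\mu_x(I)}{k}e^{kz}\big)$ is bounded on $\IR$ for one sign of $\mu_x(I)$, so boundedness alone no longer suffices. To close this case I would use the remaining lattice invariance: $f$ must be invariant under the generator $E_n$ of $D$, which acts by $(x,y,z,t)\mapsto(e^{nk}x,e^{-nk}y,z+n,t)$. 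Since $\IZ u+\IZ v$ is preserved by the hyperbolic matrix $\operatorname{diag}(e^{nk},e^{-nk})$ (a consequence of $D$ being a discrete subgroup of $G(k)$) and $e^{nk}$ is irrational (because $e^k+e^{-k}\in\IZ$ with $k\neq0$ gives $e^k+e^{-k}\geq3$), this lattice contains no nonzero vector on its eigen-axes, so a mode with $\mu_y(I)=0$ must also have $\mu_x(I)=0$; then $f_I(z)=Ce^{km z/2}$ and boundedness again gives $f_I\equiv0$.

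In short, once one Fourier-expands, everything reduces to the pair of first-order ODEs in $z$ and to routine comparisons of coefficients; the single delicate point is the $a=0$ subcase, which needs the hyperbolicity of the lattice rather than merely the periodicity and boundedness used everywhere else.
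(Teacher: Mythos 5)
Your argument is correct and reaches the paper's conclusion, but it closes the computation by a route that differs in substance from the paper's, so a comparison is worthwhile. Both proofs work inside the same Fourier framework \eqref{eq: Fourier M(k)} and reduce to ODEs in $z$ for the coefficients; however the paper never uses the second equation $\bar{\cX}_2(f)=0$: it writes $f=u+\ii v$, keeps only the two real equations coming from $\bar{\cX}_1$, decouples the resulting $2\times 2$ system \eqref{eq: diff syst hypercomplex m(k)}, and kills nonzero solutions by the reality/symmetry trick of Proposition \ref{prop: kod dim m(lambda)} when $b e^{kz}\frac{1}{\delta}(Av_2-Bu_2)-aC\not\equiv 0$, falling back on growth/lattice-invariance when that coefficient vanishes identically; this treats all $a$, including $a=0$ (i.e.\ $\pm J_1$), uniformly. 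You instead use both equations, eliminate $f_I'$, compare coefficients of the linearly independent functions $1$, $e^{kz}$, $e^{-kz}$, and conclude by boundedness of the Fourier coefficients --- cleaner for $a\neq 0$, since it avoids any discussion of the reality of the integration constants --- but at $a=0$ you must inject an arithmetic fact absent from the paper's proof, namely that the frequency lattice meets the eigen-axes of the hyperbolic holonomy only in $0$. That fact is true and your justification is essentially right; just note that the frequencies $(\mu_x,\mu_y)$ range over the lattice \emph{dual} to $\IZ u+\IZ v$, which is preserved by $\operatorname{diag}(e^{-nk},e^{nk})$, so the ``no nonzero vector on an eigen-axis'' argument should be run on the dual lattice (it goes through verbatim: a nonzero axis vector would generate, under iterates of the hyperbolic map, lattice points accumulating at $0$, so discreteness together with $e^{nk}\neq\pm 1$ already suffices and irrationality is not strictly needed), and that $n\neq 0$ is implicitly used, being forced by cocompactness of $D$. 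With these two small touches your proof is complete: the paper's reality trick buys uniformity in $(a,b)$, while your elimination argument is more mechanical and isolates exactly where extra lattice arithmetic is required.
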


\begin{proof}
Let $f = u + \ii v$ be a smooth complex-valued function. The condition that $f (\varphi^1 \wedge \varphi^2)^{\tensor m}$ is a pseudo-holomorphic pluricanonical section translates in $\delbar f + mf \omega = 0$, which in turns is equivalent to
\begin{equation}\label{eq: system for M(k) twistor}
\left\{ \begin{array}{l}
\bar{\cX}_1(f) - \ii \frac{mk}{4}(b^2 - c^2)f = 0\\
\bar{\cX}_2(f) - \ii \frac{mk}{2} bc f = 0,
\end{array} \right.
\end{equation}
hence to
\[\left\{ \begin{array}{l}
b e_1(u) + c e_2(u) - a e_4(u) - e_3(v) + \frac{1}{2}mk(b^2 - c^2)v = 0\\
b e_1(v) + c e_2(v) - a e_4(v) + e_3(u) - \frac{1}{2}mk(b^2 - c^2)u = 0\\
c e_1(u) - b e_2(u) + a e_3(u) - e_4(v) + mkbcv = 0\\
c e_1(v) - b e_2(v) + a e_3(v) + e_4(u) - mkbcu = 0.
\end{array} \right.\]

For $c = 0$ and using the Fourier series expansion \eqref{eq: Fourier M(k)} we get the following system for the coefficients $u_I$ and $v_I$ (with $I = (A, B, C) \in \IZ^3$)
\begin{equation}\label{eq: system diff eq}
\left( \begin{array}{cccc}
2\pi \ii \pa{b e^{kz} \frac{1}{\delta} (A v_2 - Bu_2) - aC} & \frac{1}{2}mkb^2 & 0 & -1\\
-\frac{1}{2}mkb^2 & 2\pi \ii \pa{b e^{kz} \frac{1}{\delta} (A v_2 - Bu_2) - aC} & 1 & 0\\
-2\pi \ii b e^{-kz} \frac{1}{\delta} (-Av_1 + Bu_1) & -2\pi \ii C & a & 0\\
2\pi \ii C & -2\pi \ii b e^{-kz} \frac{1}{\delta} (-Av_1 + Bu_1) & 0 & a
\end{array} \right) \left( \begin{array}{c}
u_I\\
v_I\\
u'_I\\
v'_I
\end{array} \right) = 0
\end{equation}
where $'$ denotes the derivative with respect to $z$. The real part of the determinant of this matrix is
\[-4\pi^2 \pa{\frac{1}{\delta} e^{kz} a b (A v_2 - B u_2) + b^2 C}^2 - 4\pi^2 \frac{1}{\delta^2} e^{-2kz}b^2 (A v_1 - B u_1)^2 + \frac{1}{4} m^2 k^2 a^2 b^4\]
while its imaginary part is
\[2 \pi e^{-kz} \frac{1}{\delta} m k a b^3 (A v_1 - B u_1).\]
We analyze several different cases separately.
\begin{enumerate}
\item If $b = 0$ then $(a, b, c) = (\pm 1, 0, 0)$ and so $J_{(a, b, c)} = \pm J_0$. We computed that $\kod(M(k), J_{(\pm 1, 0, 0)}) = 0$ in Proposition \ref{prop: kod dim m(k) 1} (it is the fibre over the origin). From now on we will then assume that $b \neq 0$.
\item If $a \neq 0$ and $I = (A, B, C) \in \IZ^3$ is such that $Av_1 - Bu_1 \neq 0$, then the imaginary part of the determinant is non zero. In particular, the determinant itself is non zero and so $u_I = v_I = 0$ is the only solution.
\item If $a = 0$, then $b = \pm 1$ and the matrix of the system \eqref{eq: system diff eq} has determinant
\[-4\pi^2 C^2 - 4 \pi^2 \frac{1}{\delta^2}e^{-2kz}(A v_1 - B u_1)^2.\]
It follows that if $I = (A, B, C) \in \IZ^3$ is such that either $C \neq 0$ or $Av_1 - Bu_1 \neq 0$, then the solution of system \eqref{eq: system diff eq} is $u_I = v_I = 0$. We are then left with the case where $C = Av_1 - Bu_1 = 0$.\\
Assume first that $A v_1 - B u_1 = 0$ is possible only for $A = B = 0$. In this case we can conclude from our discussion that $u = u_{(0, 0, 0)}$, $v = v_{(0, 0, 0)}$ are constant. So the same is true for $f$, which must then be zero because of system \eqref{eq: system for M(k) twistor}. So $\kod(M(k), J_{(0, \pm 1, 0)}) = -\infty$ in this case.\\
Assume now that $C = A v_1 - B u_1 = 0$ for some $(A, B) \neq (0, 0)$. Then there exists a pair $(A', B') \neq (0, 0)$ such that $A' v_1 - B' u_1 = 0$, $\gcd(A', B') = 1$ and with the property that every other pair $(A, B) \in \IZ^2$ satisfying $Av_1 - Bu_1 = 0$ is of the form $(A, B) = h(A', B')$ for some $h \in \IZ$. Since $(A, B) \neq (0, 0)$ we have $A' v_2 - B' u_2 \neq 0$. The first two equations of \eqref{eq: system diff eq} are
\[\left\{ \begin{array}{l}
u'_I = \frac{1}{2} mk u_I \mp 2\pi \ii e^{kz} \frac{1}{\delta} h(A' v_2 - B' u_2) v_I\\
v'_I = \pm 2\pi \ii e^{kz} \frac{1}{\delta} h(A' v_2 - B' u_2) u_I + \frac{1}{2} mk v_I,
\end{array} \right.\]
which decouples after the substitution
\[\left( \begin{array}{c}
u_I\\
v_I
\end{array} \right) = \left( \begin{array}{cc}
\mp \ii & \pm \ii\\
1 & 1
\end{array} \right) \left( \begin{array}{c}
\xi_I\\
\zeta_I
\end{array} \right).\]
Explicitly, we find the system
\[\left\{ \begin{array}{l}
\xi'_I = \pa{\frac{1}{2} mk + 2\pi e^{kz} \frac{1}{\delta} h (A' v_2 - B' u_2)} \xi_I\\
\zeta'_I = \pa{\frac{1}{2} mk - 2\pi e^{kz} \frac{1}{\delta} h (A' v_2 - B' u_2)} \zeta_I
\end{array} \right.\]
whose solutions are
\[\left\{ \begin{array}{l}
\xi_I(z) = c_I e^{\frac{1}{2} mk z + 2\pi e^{kz} \frac{1}{k \delta} h (A' v_2 - B' u_2)}\\
\zeta_I(z) = d_I e^{\frac{1}{2} mk z - 2\pi e^{kz} \frac{1}{k \delta} h (A' v_2 - B' u_2)}
\end{array} \right. \qquad c_I, d_I \in \IC.\]
So we have
\[\left\{ \begin{array}{l}
u_I = \mp \ii \pa{\xi_I - \zeta_I}\\
v_I = \xi_I + \zeta_I
\end{array} \right. \Longrightarrow f_I = u_I + \ii v_I = \left\{ \begin{array}{ll}
2\ii \zeta_I & \text{for } b = 1,\\
2\ii \xi_I & \text{for } b = -1.
\end{array} \right.\]
We focus on the case $b = 1$, as the case $b = -1$ is similar. The Fourier expansion \eqref{eq: Fourier M(k)} simplifies to
\[f(x, y, z, t) = \sum_{h \in \IZ} 2 \ii d_h e^{\frac{1}{2} mk z - 2\pi e^{kz} \frac{1}{k \delta} h (A' v_2 - B' u_2)} e^{2 \pi \ii \frac{1}{\delta} h (A' v_2 - B' u_2) x}.\]
Now we exploit the fact that $f$ is equivariant with respect to the action of the lattice, in particular we have that $f(x, z) = f(e^{kn\gamma}x, z + n\gamma)$ for every $\gamma \in \IZ$ and we can write
\[f(e^{kn\gamma}x, z + n\gamma) = \sum_{h \in \IZ} g_h^\gamma(z) e^{2 \pi \ii \frac{1}{\delta} h (A' v_2 - B' u_2) x}.\]
The Fourier coefficients $g_h^\gamma$ can be computed:
\[\begin{array}{rl}
g_h^\gamma(z) = & \int_0^{\frac{\delta}{A' v_2 - B' u_2}} \sum_{l \in \IZ} 2 \ii d_l e^{\frac{1}{2} mk (z + n\gamma) - 2\pi e^{k(z + n\gamma)} \frac{1}{k \delta} l (A' v_2 - B' u_2)} e^{2 \pi \ii \frac{1}{\delta} (A' v_2 - B' u_2) (e^{kn\gamma l - h}) x} dx\\
= & \sum_{l \in \IZ} 2 \ii d_l e^{\frac{1}{2} mk (z + n\gamma) - 2\pi e^{k(z + n\gamma)} \frac{1}{k \delta} l (A' v_2 - B' u_2)} \int_0^{\frac{\delta}{A' v_2 - B' u_2}} e^{2 \pi \ii \frac{1}{\delta} (A' v_2 - B' u_2) (e^{kn\gamma l - h}) x} dx,
\end{array}\]
and because of the equivariance we have $f_h(z) = g_h^\gamma(z)$ for every $\gamma \in \IZ$. Let now $\gamma \rightarrow \pm \infty$ in such a way that $kn \gamma \rightarrow -\infty$: we have $f_h = \lim_{kn \gamma \rightarrow -\infty} g_h^\gamma$ and since
\[\begin{array}{l}
e^{\frac{1}{2} mk (z + n\gamma)} \longrightarrow 0\\
e^{- 2\pi e^{k(z + n\gamma)} \frac{1}{k \delta} l (A' v_2 - B' u_2)} \longrightarrow 1\\
\int_0^{\frac{\delta}{A' v_2 - B' u_2}} e^{2 \pi \ii \frac{1}{\delta} (A' v_2 - B' u_2) (e^{kn\gamma l - h}) x} dx \longrightarrow \left\{ \begin{array}{ll}
1 & \text{for } h = 0,\\
0 & \text{for } h \neq 0
\end{array} \right.
\end{array}\]
we deduce that this limit is zero. So $f = 0$ is the unique solution in this case.\\
As a consequence
\[\kod(M(k), J_{(0, \pm 1, 0)}) = -\infty\]
also in this case.
\item Finally, let $a \neq 0$ and assume that $A v_1 - B u_1 = 0$.\\
If $(A, B) \neq (0, 0)$ (observe that this is possible if and only if $u_1$ and $v_1$ are one a rational multiple of the other) then we have $Av_2 - Bu_2 \neq 0$ and so the determinant of the matrix of the system \eqref{eq: system diff eq}, which is
\[-4\pi^2 \pa{\frac{1}{\delta} e^{kz} ab h (A' v_2 - B' u_2) + b^2 C}^2 + \frac{1}{4} m^2 k^2 a^2 b^4,\]
is a function of $z$. It vanishes for at most a finite number of values of $z$. Where the determinant is non zero, the corresponding value of $u_I$ and $v_I$ must then be zero, and since both $u_I$ and $v_I$ are continuous we deduce that they must vanish identically.\\
We are then left with the case $(A, B) = (0, 0)$, where the determinant is
\[-4\pi^2 b^4 C^2 + \frac{1}{4} m^2 k^2 a^2 b^4.\]
For $C = 0$ this is non zero, hence $u_I = v_I = 0$. For $C \neq 0$ we deduce from the third equation of \eqref{eq: system diff eq} that
\[v_I = \frac{a}{2\pi \ii C} u'_I \Longrightarrow v'_I = \frac{a}{2\pi \ii C} u''_I,\]
so from the fourth equation we have
\[v'_I = -\frac{2\pi \ii C}{a} u_I \Longrightarrow -\frac{2\pi \ii C}{a} u_I = \frac{a}{2\pi \ii C} u''_I \Longrightarrow u''_I = \frac{4\pi^2 C^2}{a^2} u_I.\]
We can solve this last equation, and find that
\[\left\{ \begin{array}{l}
u_I(z) = c_I e^{\frac{2\pi C}{a} z} + d_I e^{-\frac{2\pi C}{a} z}\\
v_I(z) = -\ii \pa{c_I e^{\frac{2\pi C}{a} z} - d_I e^{-\frac{2\pi C}{a} z}}.
\end{array} \right.\]
for $c_I, d_I \in \IC$. It follows that the Fourier coefficient $f_I(z)$ for $f$ is
\[f_I(z) = u_I(z) + \ii v_I(z) = 2 c_I e^{\frac{2\pi C}{a} z}.
\]
It follows that
\[f = f(z, t) = \sum_{C \in \IZ \smallsetminus \{ 0 \}} 2 c_C e^{\frac{2\pi C}{a} z} e^{2\pi \ii C t}\]
and since $f(z, t) = f(z + n\gamma, t)$ for every $\gamma \in \IZ$ we deduce that
\[f_C(z) = f_C(z + n\gamma) \qquad \text{for every } \gamma \in \IZ.\]
Since this is possible only if $c_C = 0$, we deduce that we must have $f = 0$. It then follows that
\[\kod(M(k), J_{(a, b, 0)}) = -\infty.\]
\end{enumerate}
\end{proof}

\begin{rem}\label{rem: path in twistor}
We remark that the family of almost complex structures $\{ J_r \}$ is obtained as particular values of $a$, $b$, $c$, namely: $a = \alpha (1 - r)^2$, $b = -2 \alpha r^2$ and $ c = 2\alpha r (1 - r)$. Precisely $\{ J_r \}$ is a path in the Twistor space passing through $J_0$ ($r = 0$) and $-J_1$ ($r = 1$). However $J_2$ is not contained in this family.
\end{rem}

\subsubsection{Kodaira dimension of $(\cN, J_{(a, b, c)})$} Consider $(\cN, J_{(a, b, c)})$. 

We have that
\[\delbar \varphi^1 = \frac{\ii (1-a^2)}{4} (  \varphi^1 \wedge \bar{\varphi}^2  -  \varphi^2 \wedge \bar{\varphi}^1)\]
and
\[\delbar \varphi^2 = \frac{b}{2} \varphi^1 \wedge \bar{\varphi}^1 +\frac{1}{4} (c-ab\ii) \varphi^1 \wedge \bar{\varphi}^2 + \frac{1}{4} (c+ab\ii) \varphi^2 \wedge \bar{\varphi}^1),\]
from which
\[\delbar(\varphi^1 \wedge \varphi^2) = -\frac{1}{4} \pa{(c+ab\ii) \varphi^1 \wedge \varphi^2 \wedge \bar{\varphi}^1 + (1-a^2)\ii \varphi^1 \wedge \varphi^2 \wedge \bar{\varphi}^2}.\]
As a section of $\Omega^{0, 1}_X \tensor \Omega^{2, 0}_X$ this corresponds to $\delbar(\varphi^1 \wedge \varphi^2) = \omega \tensor (\varphi^1 \wedge \varphi^2)$, where
\[\omega = -\frac{1}{4} \pa{(c+ab\ii) \bar{\varphi}^1 + (1-a^2)\ii \bar{\varphi}^2}\]
and so
\[\delbar \pa{(\varphi^1 \wedge \varphi^2)^{\tensor m}} = m\omega \tensor (\varphi^1 \wedge \varphi^2)^{\tensor m}.\]
\begin{prop}
We have the following:
\[\operatorname{kod} (\cN, J_{(a, b, 0)}) = -\infty\]
for all $a,b.$
\end{prop}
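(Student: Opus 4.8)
The plan is to follow the same strategy as in the proof of Proposition \ref{prop: kod dim n 2}, which treats the case $(a,b,c)=(0,1,0)$ (that is $J_1$), but now keeping the parameters $a$, $b$ general subject to $a^2+b^2=1$ and $a\neq\pm1$. First I would write a pseudoholomorphic pluricanonical section as $f\cdot(\varphi^1\wedge\varphi^2)^{\tensor m}$ with $f=u+\ii v$, and use the displayed formula $\delbar\bigl((\varphi^1\wedge\varphi^2)^{\tensor m}\bigr)=m\omega\tensor(\varphi^1\wedge\varphi^2)^{\tensor m}$ with $\omega=-\frac14\bigl((c+ab\ii)\bar\varphi^1+(1-a^2)\ii\bar\varphi^2\bigr)$, specialised to $c=0$, so that the pseudoholomorphicity condition $\delbar f+mf\omega=0$ becomes the two complex equations $\bar\cX_1(f)-\frac{\ii}{4}mab\,f=0$ and $\bar\cX_2(f)-\frac{\ii}{4}m(1-a^2)f=0$. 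Expanding $\bar\cX_i$ via the $(1,0)$-frame $\cX_1=\frac12(\varepsilon_1-\ii\varepsilon_2)$, $\cX_2=\frac12(\varepsilon_3-\ii\varepsilon_4)$ with $\varepsilon_1=be_1-ae_4$, $\varepsilon_2=e_3$, $\varepsilon_3=-be_2+ae_3$, $\varepsilon_4=e_4$ (the $c=0$ specialisation of the $J$-adapted frame) and then substituting the explicit vector fields \eqref{eq: tangent fields N} on $\cN$, I get a real linear first-order PDE system in $u,v$ in the coordinates $x,y,z,t$.

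Next I would insert the Fourier expansion \eqref{eq: Fourier N}, $f=\sum_{I=(p,q,r)\in\IZ^3}f_I(x)e^{\ii\pi(py+qz+rt)}$. As in Proposition \ref{prop: kod dim n 2}, the two equations coming from $\bar\cX_2(f)=\dots$ (which involve $e_3=\partial_z+x\partial_t$ and $e_4=\partial_t$) should, for each fixed $I$, give a $2\times2$ homogeneous linear system for $(u_I,v_I)$ whose coefficient matrix is of the form $\begin{pmatrix} \alpha & -\beta\\ \beta & \alpha\end{pmatrix}$ with $\alpha,\beta$ real functions of $x$ and the constants; its determinant $\alpha^2+\beta^2$ then vanishes only when $\alpha=\beta=0$. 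One expects $\beta$ to involve the $e^0$-constant $\frac14(1-a^2)m\neq0$ (since $a\neq\pm1$ and $m\geq1$), so that $\beta$ cannot vanish identically, forcing $u_I=v_I=0$ whenever $q\neq0$ or $r\neq0$; hence $u,v$ depend only on $x,y$. Then the remaining two equations reduce to a constant-coefficient system in $x,y$ of exactly the shape encountered in Proposition \ref{prop: kod dim n 2} (after absorbing the factors $a,b$), leading to a second-order elliptic operator of the form $\partial_x^2+\partial_y^2-(\text{const})\partial_y+(\text{const})$ with nonzero lower-order terms proportional to $m$; passing to the reduced Fourier series in $x,y$ and using the irrationality of $\pi$ kills all coefficients, so $u=v=0$.

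Having shown $f=0$ for every $m\geq1$, one concludes $P_m(\cN,J_{(a,b,0)})=0$ for all $m\geq1$, hence $\kod(\cN,J_{(a,b,0)})=-\infty$; for the excluded value $a=\pm1$ the structure is $\pm J_0$ and the result $\kod(\cN,\pm J_0)=-\infty$ was already recorded. The main obstacle I anticipate is purely computational bookkeeping: keeping track of the interaction between the $x$-dependent coefficients $x$, $x^2$ coming from $e_2,e_3$ on $\cN$ and the linear combinations weighted by $a,b$, and verifying in the first step that the determinant of the $2\times2$ system genuinely has a nonvanishing imaginary (or second real-square) part for all $a\neq\pm1$ — i.e.\ making sure that the special parameter values do not conspire to produce an extra family of solutions. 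I would handle this by treating separately the cases $r\neq0$, $r=0$ with $q\neq0$, and $q=r=0$, exactly as in the three-case split at the end of the proof of Proposition \ref{prop: kod dim n 3}, each time reducing to the nonvanishing of a sum of real squares or to the irrationality of $\pi$.
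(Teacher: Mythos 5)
Your overall route is the paper's: expand $u,v$ by \eqref{eq: Fourier N} and exploit the two equations coming from $\bar{\cX}_2$, which contain no $x$-derivatives and hence give, for each multi-index $I$, a purely algebraic $2\times 2$ system in $(u_I,v_I)$. But the structure you predict for that system is not the one that occurs, and the second half of your plan is built on it. Since $\varepsilon_3=-be_2+ae_3$ and $\varepsilon_4=e_4$, inserting \eqref{eq: Fourier N} (frequencies $p,q,r$ in $y,z,t$) turns the $\bar{\cX}_2$-pair into
\[
\left( \begin{array}{cc}
\ii\pi Q(x) & \frac{1}{2}m(1-a^2)-\ii\pi r\\
-\frac{1}{2}m(1-a^2)+\ii\pi r & \ii\pi Q(x)
\end{array} \right)
\left( \begin{array}{c} u_I\\ v_I \end{array} \right)=0,
\qquad Q(x)=-b\left(p+qx+\frac{1}{2}rx^2\right)+a(q+rx),
\]
whose diagonal is purely imaginary and whose off-diagonal entries are genuinely complex: this is not your real matrix $\left( \begin{array}{cc} \alpha & -\beta\\ \beta & \alpha \end{array} \right)$, and its determinant is not a sum of real squares. (That structure appeared in Proposition \ref{prop: kod dim n 2} only because there the $\bar{\cX}_2$-equation had no zero-order term.) The determinant equals $-\pi^2Q(x)^2+\frac{1}{4}m^2(1-a^2)^2-\pi^2r^2-\ii\pi m(1-a^2)r$, and the paper's argument is an imaginary/real split: the imaginary part forces $r=0$, and for $r=0$ the real part is a polynomial in $x$ vanishing at finitely many points, so continuity of $u_I,v_I$ gives $u_I=v_I=0$.

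Because of this, your second stage does not arise. It is not true that only the modes with $q\neq 0$ or $r\neq 0$ die while $u,v$ survive as doubly periodic functions of $(x,y)$: for $b\neq 0$ the $y$-frequency $p$ enters the $\bar{\cX}_2$-pair through $e_2$ (you list only $e_3,e_4$, but $\varepsilon_3$ contains $-be_2$), and the leftover $\bar{\cX}_1$-equations are first-order ODEs in $x$ (since $\varepsilon_1=be_1-ae_4$ contains $e_1=\partial/\partial x$), not a constant-coefficient system in $(x,y)$; the operator $\partial_x^2+\partial_y^2-(\const)\,\partial_y+(\const)$ and the irrationality of $\pi$ play no role here. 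Note also the internal inconsistency: if the off-diagonal entry really were real and contained the nonzero constant $\frac{1}{4}m(1-a^2)$ for every $I$, your sum-of-squares determinant would kill all modes at once, leaving nothing for a second stage. Finally, the ``conspiracy'' of special parameter values that you worry about does exist, but your mechanism would not detect it: for $q=r=0$ the real part of the determinant is the constant $-\pi^2b^2p^2+\frac{1}{4}m^2b^4$, which vanishes identically when $2\pi|p|=m|b|$; such resonant modes are not excluded by the algebraic pair and must be ruled out using the $\bar{\cX}_1$ ODEs together with the $2$-periodicity in $x$ of the $(p,0,0)$-coefficients (a point on which the paper's own proof is also silent). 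So the computation has to be carried out along the paper's lines rather than by transplanting the mechanisms of Propositions \ref{prop: kod dim n 2} and \ref{prop: kod dim n 3} unchanged.
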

\begin{proof} Following previous computation we have that a pluricanonical form  $f (\varphi^1 \wedge \varphi^2)^{\tensor m}$ is pseudo-holomorphic with respect to $J_{(a,b,c)}$ if and only if:
\[\left\{ \begin{array}{l}
\bar{\cX}_{1}(f)-\frac{1}{4}mf(c+ab\sqrt {-1})= 0\\
\bar{\cX}_{2}(f) - \frac{1}{4} \ii m(1-a^2) f = 0.
\end{array} \right.\]
Writing $f = u + \ii v$  previous system is equivalent to
\[\left\{ \begin{array}{l}
be_1(u) +ce_2(u)-ae_4(u)-e_3(v)- \frac{1}{2}m(cu -abv)=0\\
be_1(v)+ce_2(v) + e_3(v) - a e_4(v) - \frac{1}{2} m(abu + cv)=0\\
ae_3(u)+ ce_1(u) - be_2(u) -e_4(v) + \frac{1}{2} m(1-a^2)v=0\\
e_4(u) +c e_1(v) - b e_2(v)+ a e_3(v) -\frac{1}{2} m(1-a^2)u=0.
\end{array} \right.\]
By substituting Fourier coefficients $u_I(x)$ and $v_I(x)$, where $I=(A,B,C)\in {\mathbb Z}^3$, denoting $'$ the derivative with respect to $x$ we get:
\[\left\{ \begin{array}{l}
bu_I' +[\pi c \sqrt {-1}(A+xB+\frac{1}{2}x^2C)-aC-\frac{1}{2}mc]u_I+[\frac{1}{2}mab-\pi \sqrt{-1}(xC+B)]v_I=0\\
bv_I' +[\pi c \sqrt {-1}(A+xB+\frac{1}{2}x^2C)-aC-\frac{1}{2}mc]v_I-[\frac{1}{2}mab-\pi \sqrt{-1}(xC+B)]u_I=0\\
cu_I'-\pi  \sqrt {-1}[b(A+xB+\frac{1}{2}x^2C)-a(B+Cx)]u_I+[\frac{1}{2}m(1-a^2)-\pi \sqrt{-1}C]v_I=0\\
cv_I'-\pi  \sqrt {-1}[b(A+xB+\frac{1}{2}x^2C)-a(B+Cx)]v_I-[\frac{1}{2}m(1-a^2)-\pi \sqrt{-1}C]u_I=0.
\end{array} \right.\]
If $c=0$ the last two equations involve only $u_I$ and $v_I$, the determinant of the matrix of this homogeneous system has imaginary part given by:
$$\pi Cm(1-a^2)$$
which is zero if and only if $C=0$. Moreover for $C=0$ the real part of this determinant vanishes only for finite values of $x$ and we can conclude that $u_I=v_I=0$. So the statement.

\end{proof}

\subsubsection{Kodaira dimension of $(\cM(\lambda), J_{(a, b, c)})$}  Finally consider $(\cM(\lambda), J_{(a, b, c)})$. 

We have that
\[\delbar \varphi^1 = \frac{\lambda b}{2}  \varphi^1 \wedge \bar{\varphi}^1 + \frac{1}{4}\pa{ (1+\lambda )c + ab(1-\lambda)\ii)  \varphi^1 \wedge \bar{\varphi}^2+ (\lambda -1)(c +ab\ii) } \varphi^2 \wedge \bar{\varphi}^1)\]
and
\[\delbar \varphi^2 = \frac{(2+\lambda)}{4} (-b+ac\ii) \varphi^1 \wedge \bar{\varphi}^2 -\frac{1}{4}(b\lambda +ac (2+\lambda) \ii)\varphi^2 \wedge \bar{\varphi}^1 - \frac{c(1+\lambda)}{2}\varphi^2 \wedge \bar{\varphi}^2,\]
from which
\[\delbar(\varphi^1 \wedge \varphi^2) = \frac{1}{4} \pa{(-b\lambda+ac(2+\lambda)\ii) \varphi^1 \wedge \varphi^2 \wedge \bar{\varphi}^1 + (c(1+\lambda)-ab(1-\lambda)\ii) \varphi^1 \wedge \varphi^2 \wedge \bar{\varphi}^2}.\]
As a section of $\Omega^{0, 1}_X \tensor \Omega^{2, 0}_X$ this corresponds to $\delbar(\varphi^1 \wedge \varphi^2) = \omega \tensor (\varphi^1 \wedge \varphi^2)$, where
\[\omega = \frac{1}{4} \pa{(-b \lambda +ac(2+\lambda)\ii)\bar{\varphi}^1 + (c(1+\lambda)-ab(1-\lambda)\ii) \bar{\varphi}^2}.\]
and so
\[\delbar \pa{(\varphi^1 \wedge \varphi^2)^{\tensor m}} = m\omega \tensor (\varphi^1 \wedge \varphi^2)^{\tensor m}.\]

\begin{prop}
We have the following:
\[\operatorname{kod} (\cM(\lambda), J_{(a, b,0)}) = -\infty\]
for all $a,b$.
\end{prop}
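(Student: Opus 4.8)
The strategy is the one already used for Propositions \ref{prop: kod dim n 1}--\ref{prop: kod dim m(lambda)}: translate pseudoholomorphicity of a pluricanonical section into a linear system for the Fourier coefficients of its coordinates and show that the only solution is the trivial one. First specialise the torsion form to $c=0$, obtaining $\omega=-\tfrac14\bigl(b\lambda\,\bar{\varphi}^1+ab(1-\lambda)\ii\,\bar{\varphi}^2\bigr)$; then $f\cdot(\varphi^1\wedge\varphi^2)^{\tensor m}$ is pseudoholomorphic precisely when $\delbar f+mf\omega=0$, that is, when $\bar{\cX}_1(f)-\tfrac14 mb\lambda\,f=0$ and $\bar{\cX}_2(f)-\tfrac14 mab(1-\lambda)\ii\,f=0$. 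In the $J_{(a,b,0)}$-adapted frame one has $\bar{\cX}_1=\tfrac12(be_1-ae_4+\ii e_3)$ and $\bar{\cX}_2=\tfrac12(-be_2+ae_3+\ii e_4)$; substituting \eqref{eq: tangent vectors cm(lambda)}, writing $f=u+\ii v$ with $u,v$ real, and separating real and imaginary parts turns these two complex equations into four real first order equations for $u$ and $v$.

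The crucial point is that $\bar{\cX}_2$ contains no $\partial/\partial t$, so after inserting the Fourier expansion \eqref{eq: Fourier M(lambda)} the two equations coming from the $\bar{\cX}_2$-equation become, for each $I=(A,B,C)\in\IZ^3$, a \emph{pointwise in $t$} homogeneous $2\times2$ linear system for $(u_I(t),v_I(t))$ whose coefficient matrix has diagonal entries $P:=2\pi\ii(aAe^{\lambda t}-bBe^t)$ and off-diagonal entries $\pm Q$, with $Q:=\tfrac12 mab(1-\lambda)-2\pi\ii Ce^{-(1+\lambda)t}$. Its determinant $P^2+Q^2$ has imaginary part $-2\pi mab(1-\lambda)Ce^{-(1+\lambda)t}$; since $1-\lambda\neq0$ and $b\neq0$ (recall $a\neq\pm1$ and $a^2+b^2=1$), when $a\neq0$ this is identically zero only if $C=0$, and for $C=0$ the determinant is the real-analytic function $-4\pi^2(aAe^{\lambda t}-bBe^t)^2+\tfrac14 m^2a^2b^2(1-\lambda)^2$, which is never identically zero. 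Hence for $a\neq0$ the determinant vanishes only on a discrete subset of $t$, so by continuity $u_I\equiv v_I\equiv0$ for every $I$; thus $f=0$, all plurigenera vanish, and $\kod(\cM(\lambda),J_{(a,b,0)})=-\infty$.

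There remains the case $a=0$ (so $b=\pm1$, i.e.\ $J_{(0,b,0)}=\pm J_1$). Here the determinant above equals $-4\pi^2 b^2B^2e^{2t}-4\pi^2C^2e^{-2(1+\lambda)t}$, still strictly negative whenever $(B,C)\neq(0,0)$, so all Fourier modes with $(B,C)\neq(0,0)$ vanish and $u,v$ depend only on $x$ and $t$; on such functions the $\bar{\cX}_2$-equation is automatically satisfied, while the $\bar{\cX}_1$-equation reduces, after Fourier in $x$, to the ODE system $u_A'=\tfrac12 m\lambda u_A+\tfrac{2\pi\ii A}{b}e^{\lambda t}v_A$, $v_A'=-\tfrac{2\pi\ii A}{b}e^{\lambda t}u_A+\tfrac12 m\lambda v_A$, which has exactly the shape of the system solved in the proof of Proposition \ref{prop: kod dim m(lambda)}. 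One then argues as there: the coefficient matrix commutes with the rotation by $\tfrac{\pi}{2}$, so $(u_A,v_A)$ solves the system if and only if $(-v_A,u_A)$ does; diagonalising via $\xi_A=u_A+\ii v_A$ and $\zeta_A=-u_A+\ii v_A$ produces explicit solutions in which $u_A$ takes only real values and $v_A$ only purely imaginary ones (and, for $A=0$, are bounded only when zero), which together forces $u_A=v_A=0$ for all $A$, whence $f=0$ again and $\kod=-\infty$.

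I expect the case $a=0$ to be the only real obstacle: precisely there the clean determinantal obstruction coming from the $\bar{\cX}_2$-equation degenerates on the modes independent of $y$ and $z$, and one has to fall back on the more delicate ODE analysis of Proposition \ref{prop: kod dim m(lambda)}, in particular on the real/purely-imaginary value dichotomy for its solutions, to rule out the surviving candidate pluricanonical sections. Everything else (writing out the four real equations explicitly, the Fourier substitution, and the integration of the ODE system) is routine.
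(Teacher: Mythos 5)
Your proposal is correct and, for the generic case, follows exactly the paper's route: specialise the torsion form to $c=0$, pass to the Fourier coefficients $u_I(t)$, $v_I(t)$, observe that the $\bar{\cX}_2$-equation contains no $t$-derivative and hence gives a pointwise homogeneous $2\times 2$ system, and note that the imaginary part of its determinant is a nonzero multiple of $abC\,e^{-(1+\lambda)t}$ while for $C=0$ the real part vanishes only at finitely many $t$, forcing $u_I=v_I=0$ by continuity. Where you genuinely differ is that you notice this determinant criterion degenerates when $a=0$ (i.e.\ for $J_{(0,\pm 1,0)}=\pm J_1$): there the imaginary part is identically zero and the modes with $B=C=0$ escape the argument. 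The paper's proof does not separate this case (its ``zero if and only if $C=0$'' implicitly assumes $ab\neq 0$), so your extra step — reducing the surviving modes to the ODE system $u_A'=\tfrac12 m\lambda u_A+\tfrac{2\pi\ii A}{b}e^{\lambda t}v_A$, $v_A'=-\tfrac{2\pi\ii A}{b}e^{\lambda t}u_A+\tfrac12 m\lambda v_A$ and disposing of it as in Proposition \ref{prop: kod dim m(lambda)} (with the boundedness remark for $A=0$) — is a genuine completion of the published argument rather than a deviation from it. The only caveat is one you inherit from the paper: in Proposition \ref{prop: kod dim m(lambda)} the dichotomy ``$u_A$ real-valued, $v_A$ purely imaginary'' is read off from the explicit solutions with \emph{real} integration constants, whereas the solution space is complex two-dimensional; if you want your $a=0$ case to be airtight you should say why complex constants are also excluded (for instance by exploiting the full lattice invariance in $t$, not just boundedness and the rotation symmetry $(u_A,v_A)\mapsto(-v_A,u_A)$, which by itself only permutes solutions). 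Since this is precisely the level of detail of the paper's own earlier proof, it is a shared weakness rather than a new gap you introduced.
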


\begin{proof}
Following previous computation we have that a pluricanonical form  $f (\varphi^1 \wedge \varphi^2)^{\tensor m}$ is pseudo-holomorphic with respect to $J_{(a,b,c)}$ if and only if:
\[\left\{ \begin{array}{l}
\bar{\cX}_{1}(f) + \frac{1}{4}mf(-b\lambda+ac\sqrt {-1}(2+\lambda))= 0\\
\bar{\cX}_{2}(f) + \frac{1}{4}mf(c(1+\lambda)-ab(1-\lambda) \ii ) = 0.
\end{array} \right.\]
Writing $f = u + \ii v$ and substituting Fourier coefficients $u_I(t)$ and $v_I(t)$, where $I=(A,B,C)\in {\mathbb Z}^3$ (cf.~\eqref{eq: Fourier M(lambda)}), and denoting $'$ the derivative with respect to $t$ we get:
\[\left\{ \begin{array}{l}
2 b u_I' + [4\pi \sqrt{-1}(c e^t \mu_I - a \nu_I e^{-(1+\lambda)t}) - m b \lambda] u_I - [4\pi \sqrt{-1} \lambda_I e^{\lambda t} + m a c (2 + \lambda)] v_I = 0\\
2 b v_I' + [4\pi \sqrt{-1} (c e^t \mu_I - a \nu_I e^{-(1+\lambda)t}) - m b \lambda] v_I + [4\pi \sqrt{-1} \lambda_I e^{\lambda t} + m a c (2 + \lambda)] u_I = 0\\
2 c u_I' - [4\pi \sqrt{-1} (b \mu_I e^t - a \lambda_I e^{\lambda t} - m c (1 + \lambda)] u_I + [-4\pi \sqrt{-1} \nu_I e^{-(1+\lambda)t} + a b (1 - \lambda)m] v_I = 0\\
2 c v_I' - [4\pi \sqrt{-1} (b \mu_I e^t - a \lambda_I e^{\lambda t} - m c (1 + \lambda)] v_I - [-4\pi \sqrt{-1} \nu_I e^{-(1+\lambda)t} + a b (1 - \lambda)m] u_I = 0.
\end{array} \right.\]
If $c=0$ the last two equations involve only $u_I$ and $v_I$, the determinant of the matrix of this homogeneous system has imaginary part given by:
\[-8\pi a b \nu_I m (1-\lambda) e^{-(1+\lambda)t}\]
which is zero if and only if $a \nu_I = 0$. For $\nu_I = 0$ the real part of this determinant vanishes only for finite values of $t$ and so we can conclude that $u_I = v_I = 0$. Finally, assume that $\nu_I \neq 0$ and $a = 0$. In this case we have $b = \pm 1$ and we can then deduce from the last two equations that
\[u_I = \pm \frac{\mu_I}{\nu_I} e^{(2 + \lambda)t} v_I, \qquad v_I = \mp \frac{\mu_I}{\nu_I} e^{(2 + \lambda)t} u_I.\]
It follows that $u_I = \frac{\mu_I^2}{\nu_I^2} e^{2(2 + \lambda)t} u_I$ and so $u_I = 0$, and similarly we deduce that $v_I = 0$.

Hence we get the statement.
\end{proof}

\section{Norden structures}\label{sect: norden}

Norden structures were introduced by Norden in \cite{Norden} and then studied also as almost complex structures with $B$-metric and anti K\"ahlerian structures, they have applications in mathematics and in theoretical physics. We recall here the definition.

\begin{defin}
Let $(M, J)$ be an almost complex manifold and let $g$ be a pseudo Riemannian metric on $M$ such that $J$ is a $g$-symmetric operator, $(J, g)$ is called \emph{Norden structure} on $M$ and $(M, J, g)$ is called a \emph{Norden manifold}. If $J$ is integrable then $(M, J, g)$ is called a \emph{complex Norden manifold}.
\end{defin}

Let $M$ one of the three four dimensional solvmanifolds $M(k)$, $\cN$, $\cM(\lambda)$.

By using previous notations consider the following natural neutral  pseudo Riemannian metrics on $M$:
\[\begin{array}{l}
\tilde{g}_0 := e^1 \otimes e^1 + e^2 \otimes e^2 - e^3 \otimes e^3 - e^4 \otimes e^4,\\
\tilde{g}_1 := e^1 \otimes e^1 - e^2 \otimes e^2 + e^3 \otimes e^3 - e^4 \otimes e^4,\\
\tilde{g}_2 := e^1 \otimes e^1 - e^2 \otimes e^2 - e^3 \otimes e^3 + e^4 \otimes e^4.
\end{array}\]

Direct computation gives the following.

\begin{lemma}
$\tilde{g}_0(J_0 \blank, J_0\blank) = \tilde{g}_0(\blank, \blank)$  and
$\tilde{g}_0(J_i \blank, \blank) = \tilde{g}_0(\blank, J_i \blank)$ for $i = 1, 2$.\\

$\tilde{g}_1(J_1 \blank, J_1\blank) = \tilde{g}_1(\blank, \blank)$  and
$\tilde{g}_1(J_i \blank, \blank) = \tilde{g}_1(\blank, J_i \blank)$ for $i = 0, 2$.\\

$\tilde{g}_2(J_2 \blank, J_2\blank) = \tilde{g}_2(\blank, \blank)$  and
$\tilde{g}_2(J_i \blank, \blank) = \tilde{g}_2(\blank, J_i \blank)$ for $i = 0, 1$.
\end{lemma}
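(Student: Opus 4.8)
\textit{Proof plan.} Since $M$ is parallelizable and each of $J_0,J_1,J_2$ and $\tilde g_0,\tilde g_1,\tilde g_2$ has constant coefficients in the global frame $\{e_1,e_2,e_3,e_4\}$, I would first observe that every identity in the statement---being an identity of tensors---is equivalent to the corresponding identity evaluated on the frame at each point, hence to an identity of constant $4\times4$ matrices; so the whole lemma is pure linear algebra. Writing $J_a$ also for the matrix of the endomorphism in this frame and $G_i$ for the diagonal matrix of $\tilde g_i$, namely $G_0=\operatorname{diag}(1,1,-1,-1)$, $G_1=\operatorname{diag}(1,-1,1,-1)$, $G_2=\operatorname{diag}(1,-1,-1,1)$, the two kinds of assertion translate as follows: $\tilde g_i(J_aX,J_aY)=\tilde g_i(X,Y)$ for all $X,Y$ amounts to $J_a^{\top}G_iJ_a=G_i$, while $\tilde g_i(J_aX,Y)=\tilde g_i(X,J_aY)$ amounts to $J_a^{\top}G_i=G_iJ_a$.

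Next I would record two elementary facts that are immediate from the explicit matrices: each $J_a$ is skew, $J_a^{\top}=-J_a$, and each $G_i$ is symmetric with $G_i^2=\id$. Using these, the target identities become, respectively, $J_aG_iJ_a=-G_i$ and $J_aG_i+G_iJ_a=0$. The core of the argument---and essentially its only content---is then the (anti)commutation table
\[J_iG_i=G_iJ_i\quad(i=0,1,2),\qquad J_jG_i=-G_iJ_j\quad(i\neq j).\]
To establish it I would argue conceptually rather than multiply matrices entry by entry: $G_i$ acts as $+\id$ on one coordinate $2$-plane $\Pi_i$ and as $-\id$ on its orthogonal complement ($\Pi_0=\langle e_1,e_2\rangle$, $\Pi_1=\langle e_1,e_3\rangle$, $\Pi_2=\langle e_1,e_4\rangle$), and a glance at the four matrices shows that $J_i$ leaves the splitting $\Pi_i\oplus\Pi_i^{\perp}$ invariant---hence commutes with $G_i$, being a scalar on each summand---whereas $J_j$ with $j\neq i$ interchanges the two summands, hence anticommutes with $G_i$.

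Granting the table, I expect the conclusions to follow immediately. For matching indices, $J_aG_aJ_a=G_aJ_a^{2}=-G_a$, so $J_a^{\top}G_aJ_a=G_a$ and $\tilde g_a$ is $J_a$-compatible; note that $J_aG_a+G_aJ_a=2J_aG_a\neq0$, which is precisely why the statement does \emph{not} assert $\tilde g_a$-symmetry of $J_a$. For distinct indices, $J_aG_i+G_iJ_a=0$ is exactly the $\tilde g_i$-symmetry of $J_a$; and here $J_aG_iJ_a=-G_iJ_a^{2}=G_i\neq-G_i$, so $\tilde g_i$ fails to be $J_a$-compatible, showing the dichotomy is sharp. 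I do not anticipate a real obstacle: the only thing to get right is the bookkeeping of which of the three coordinate-plane splittings each $J_a$ preserves and which it swaps, and this is read directly off the matrices $J_0,J_1,J_2$; everything else is the two lines of matrix algebra above applied to each of the nine pairs $(i,a)$.
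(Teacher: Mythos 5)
Your proposal is correct, and it is essentially the paper's argument: the paper simply states that the lemma follows by direct computation in the global frame, which is exactly the constant-matrix verification you carry out. Your reduction to the (anti)commutation table $J_iG_i=G_iJ_i$, $J_jG_i=-G_iJ_j$ for $i\neq j$, read off from whether $J_a$ preserves or swaps the splitting $\Pi_i\oplus\Pi_i^{\perp}$, is just a clean way of organizing that computation, and all the translations ($J_a^{\top}G_iJ_a=G_i$ for compatibility, $J_aG_i+G_iJ_a=0$ for $\tilde g_i$-symmetry, using $J_a^{\top}=-J_a$) check out.
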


Hence we get:
\begin{cor} $(M, J_i, \tilde{g}_0)$ is a Norden manifold for $i = 1, 2$ and $(M, J_0, \tilde{g}_0)$ is a pseudo Hermitian manifold.\\
$(M, J_i, \tilde{g}_1)$ is a Norden manifold for $i = 0, 2$ and $(M, J_1, \tilde{g}_1)$ is a pseudo Hermitian manifold.\\
$(M, J_i, \tilde{g}_2)$ is a Norden manifold for $i = 0, 1$ and $(M, J_2, \tilde{g}_2)$ is a pseudo Hermitian manifold.
\end{cor}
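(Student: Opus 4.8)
The plan is to obtain the statement as an immediate consequence of the preceding Lemma together with the definitions of Norden manifold and pseudo Hermitian manifold. First I would recall that a pair $(J, g)$, with $g$ a pseudo Riemannian metric, is a Norden structure precisely when $J$ is $g$-symmetric, i.e.\ $g(JX, Y) = g(X, JY)$ for all vector fields $X, Y$, while it is a pseudo Hermitian structure precisely when $J$ is a $g$-isometry, i.e.\ $g(JX, JY) = g(X, Y)$. Next I would observe that the three bilinear forms $\tilde{g}_0, \tilde{g}_1, \tilde{g}_2$ are genuine (non-degenerate) pseudo Riemannian metrics of neutral signature, since in the global coframe $\{e^1, e^2, e^3, e^4\}$ each is diagonal with entries $\pm 1$, and that each $J_i$ satisfies $J_i^2 = -\id_{TM}$ by the construction of the almost hypercomplex structure. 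Thus only the compatibility conditions remain to be checked.

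But these are exactly the content of the Lemma: for each index $j$, the endomorphism $J_j$ preserves $\tilde{g}_j$, i.e.\ $\tilde{g}_j(J_j X, J_j Y) = \tilde{g}_j(X, Y)$, hence $(M, J_j, \tilde{g}_j)$ is a pseudo Hermitian manifold; and for $i \neq j$ the endomorphism $J_i$ is $\tilde{g}_j$-symmetric, i.e.\ $\tilde{g}_j(J_i X, Y) = \tilde{g}_j(X, J_i Y)$, hence $(M, J_i, \tilde{g}_j)$ is a Norden manifold. Specializing to $j = 0, 1, 2$ reproduces the three lines of the Corollary.

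No real obstacle appears here: the only computational work is the verification of the Lemma, which is performed by evaluating the bilinear forms on the frame $\{e_1, e_2, e_3, e_4\}$ using the explicit matrices of $J_0, J_1, J_2$ and the diagonal form of $\tilde{g}_0, \tilde{g}_1, \tilde{g}_2$, and this has been assumed. If desired, one could close by remarking that none of the Norden manifolds so produced is a \emph{complex} Norden manifold, because none of $M(k)$, $\cN$, $\cM(\lambda)$ admits an integrable almost complex structure, so in particular the $J_i$ are all non-integrable.
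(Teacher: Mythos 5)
Your proposal is correct and matches the paper's reasoning: the Corollary is stated there as an immediate consequence of the preceding Lemma (``Hence we get''), exactly as you argue, with the compatibility identities $\tilde{g}_j(J_j\,\cdot, J_j\,\cdot) = \tilde{g}_j(\cdot,\cdot)$ giving the pseudo Hermitian cases and the $\tilde{g}_j$-symmetry of $J_i$ for $i \neq j$ giving the Norden cases. Your closing remark on non-integrability is a sensible extra observation but not needed for the statement.
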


Moreover we define: 
\[
\begin{array}{l}
\tilde{\omega}_0(\blank,\blank) := \tilde{g}_0(J_0 \blank,\blank) = e^1 \wedge e^2 - e^3 \wedge e^4,\\
\tilde{\omega}_1(\blank,\blank) := \tilde{g}_1(J_1 \blank,\blank) = e^1 \wedge e^3 + e^2 \wedge e^4,\\
\tilde{\omega}_2(\blank,\blank) := \tilde{g}_2(J_2 \blank,\blank) = e^1 \wedge e^4 - e^2 \wedge e^3,
\end{array}\]
and we easily get:

\begin{lemma}  If $M$=\texorpdfstring{$M(k)$}{M(k)} then $d\tilde{\omega}_0 = 0$, $d\tilde{\omega}_1 = -k e^2 \wedge e^3 \wedge e^4$, $d\tilde{\omega}_2 = k e^1 \wedge e^3 \wedge e^4$.
If $M$= \texorpdfstring{$\cN$}{M} then $d\tilde\omega_0 =e^1\wedge e^3\wedge e^4, \, d\tilde\omega_1 =-e^1 \wedge e^2 \wedge e^3, \,d\tilde\omega_2 =0$.\\
If $M$=\texorpdfstring{$\cM(\lambda)$}{M(l)} then $d\tilde\omega_0 =-e^1\wedge e^3\wedge e^4, \, d\tilde\omega_1 =\lambda e^1 \wedge e^2 \wedge e^4, \,d\tilde\omega_2 =(1+\lambda)e^1 \wedge e^2 \wedge e^3$.
\end{lemma}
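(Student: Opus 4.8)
The statement is purely computational: all nine identities follow at once by expanding $d\tilde\omega_i$ with the graded Leibniz rule and substituting the structure equations recorded earlier. The plan is to treat the three manifolds in turn, using for $M(k)$ the equations $de^1 = k\,e^1\wedge e^3$, $de^2 = -k\,e^2\wedge e^3$, $de^3 = de^4 = 0$, for $\cN$ the equations \eqref{eq: structure equations gothM}, and for $\cM(\lambda)$ the equations \eqref{eq: structure equations cm(lambda)}. Since $\{e^1,e^2,e^3,e^4\}$ is a global coframe on each compact quotient and $d$ commutes with the quotient projection, it suffices to run the computation with these invariant forms on the corresponding Lie group; the resulting identities then descend verbatim to $M(k)$, $\cN$ and $\cM(\lambda)$.

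Concretely, each $\tilde\omega_i$ is a sum of two decomposable $2$-forms, so for each summand I would write $d(e^p\wedge e^q) = de^p\wedge e^q - e^p\wedge de^q$, plug in the structure equations, reorder every resulting monomial into the standard ordered basis $e^1\wedge e^2\wedge e^3$, $e^1\wedge e^2\wedge e^4$, $e^1\wedge e^3\wedge e^4$, $e^2\wedge e^3\wedge e^4$ of $\Lambda^3$ (tracking signs), discard the monomials that repeat an index, and collect terms. For instance, on $M(k)$ one has $d(e^1\wedge e^2) = k\,e^1\wedge e^3\wedge e^2 - e^1\wedge(-k\,e^2\wedge e^3) = -k\,e^1\wedge e^2\wedge e^3 + k\,e^1\wedge e^2\wedge e^3 = 0$ and $d(e^3\wedge e^4)=0$, whence $d\tilde\omega_0 = 0$; the remaining eight cases are entirely analogous and use only the same two ingredients.

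There is no genuine obstacle here — the proof is bookkeeping — but two points deserve attention, and they are where sign errors would creep in: the interplay between the factor $(-1)^{\deg}$ in the Leibniz rule and the minus sign already present in $\tilde\omega_0 = e^1\wedge e^2 - e^3\wedge e^4$ and $\tilde\omega_2 = e^1\wedge e^4 - e^2\wedge e^3$; and the systematic vanishing of $3$-forms containing a repeated factor (e.g.\ $e^3\wedge e^1\wedge e^3 = 0$), which is exactly what makes several of the $d\tilde\omega_i$ collapse to a single monomial or to zero. Assembling the nine computations yields the three groups of identities in the statement.
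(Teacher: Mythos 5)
Your method is exactly the paper's (the paper offers no written proof beyond ``direct computation''), and the Leibniz-rule bookkeeping you describe is the right and only ingredient: for each manifold one expands $d(e^p\wedge e^q)=de^p\wedge e^q-e^p\wedge de^q$, inserts the structure equations, and kills monomials with a repeated factor. Your worked example on $M(k)$ is correct, and eight of the nine entries do come out as stated.

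However, your closing claim that assembling the nine computations ``yields the three groups of identities in the statement'' fails for one entry, and because you only asserted the remaining cases are ``entirely analogous'' you did not catch it. On $\cN$ one has $d(e^1\wedge e^2)=0$ and $d(e^3\wedge e^4)=de^3\wedge e^4-e^3\wedge de^4=-e^1\wedge e^2\wedge e^4$ (the second term vanishes since $e^3\wedge e^1\wedge e^3=0$), so $d\tilde\omega_0=-d(e^3\wedge e^4)=e^1\wedge e^2\wedge e^4$, not $e^1\wedge e^3\wedge e^4$ as the statement asserts. This is consistent with the earlier twistor-section lemma, where $d\omega_0=-e^1\wedge e^2\wedge e^4$ on $\cN$ and $\tilde\omega_0,\omega_0$ differ only in the sign of the $e^3\wedge e^4$ term; so the stated lemma contains a misprint ($e^3$ for $e^2$), and a proof that actually carries out the $\cN$ case must either flag this or it is proving something false. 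All other entries, including the three for $\cM(\lambda)$ where the coefficients $-1$, $\lambda$, $1+\lambda$ arise from the cancellations $-\lambda+(1+\lambda)$, $-1+(1+\lambda)$ and $-1-\lambda$, check out exactly as you outline.
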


Finally, by direct computation, we obtain the following expressions for twin metrics:
\[\begin{array}{lclcl}
\hat{g}_{01}(\blank,\blank) & := & \tilde{g}_0(\blank, J_1 \blank) & = & -e^1 \otimes e^3 - e^3 \otimes e^1 + e^2 \otimes e^4 +e^4 \otimes e^2,\\
\hat{g}_{02}(\blank,\blank) & := & \tilde{g}_0(\blank, J_2 \blank) & = & -e^1 \otimes e^4 - e^4 \otimes e^1 - e^2 \otimes e^3 - e^3 \otimes e^2,\\
\hat{g}_{10}(\blank,\blank) & := & \tilde{g}_1(\blank, J_0 \blank) & = & -e^1 \otimes e^2 - e^2 \otimes e^1 - e^3 \otimes e^4 -e^4 \otimes e^3,\\
\hat{g}_{12}(\blank,\blank) & := & \tilde{g}_1(\blank, J_2 \blank) & = & -e^1 \otimes e^4 - e^4 \otimes e^1 + e^2 \otimes e^3 +e^3 \otimes e^2,\\
\hat{g}_{20}(\blank,\blank) & := & \tilde{g}_2(\blank, J_0 \blank) & = & -e^1 \otimes e^2 - e^2 \otimes e^1 + e^3 \otimes e^4 + e^4 \otimes e^3,\\
\hat{g}_{21}(\blank,\blank) & := & \tilde{g}_2(\blank, J_1 \blank) & = & -e^1 \otimes e^3 - e^3 \otimes e^1 - e^2 \otimes e^4 - e^4 \otimes e^2.
\end{array}\]

\bibliographystyle{alpha}
\bibliography{KodDimTwoDimSolv}

\end{document}